\newcommand{\ds}{\displaystyle}
\newcommand{\defeq}{\mathrel{\mathop:}=}
\newcommand{\realstwo}{\mathbb{R}^2}
\newcommand{\realsthree}{\mathbb{R}^3}
\newcommand{\xb}{{\bf{x}}}
\newcommand{\Dn}{\partial_{\nu}}
\newcommand{\cE}{{\mathcal{E}}}
\newcommand{\R}{\mathbb{R}}
\newcommand{\lb}{ \langle}
\newcommand{\rb}{ \rangle}
\theoremstyle{plain}
\newtheorem{theorem}{Theorem}[section]
\newtheorem{lemma}[theorem]{Lemma}
\newtheorem{proposition}[theorem]{Proposition}
\newtheorem{corollary}[theorem]{Corollary}
\theoremstyle{remark}
\newtheorem{remark}{Remark}[section]
\numberwithin{equation}{section}
\numberwithin{theorem}{section}
\numberwithin{remark}{section}
\numberwithin{assumption}{section}
\numberwithin{condition}{section}
\begin{document}

\title{Strong Stabilization of a 3D Potential Flow \\ via a Weakly Damped von Karman Plate}
 \author{\small \begin{tabular}[t]{c@{\extracolsep{1em}}cc@{\extracolsep{1em}}}
         Abhishek Balakrishna & Irena Lasiecka & Justin T. Webster  \\ 
 \it UMBC ~~&~~ \it University of Memphis ~~&~~ \it UMBC \\ 
 \it Baltimore, MD& \it Memphis, TN &\it Baltimore, MD \\  
bala2@umbc.edu & lasiecka@memphis.edu & websterj@umbc.edu
\end{tabular}}

\maketitle

\begin{abstract}
\noindent  The elimination of  aeroelastic instability (resulting in sustained oscillations of bridges, buildings, airfoils) is a central engineering and design issue. Mathematically,  this translates to  {\it strong asymptotic stabilization} of a 3D flow by a 2D elastic structure.  The stabilization (convergence to the stationary set) of a aerodynamic wave-plate model is established here. A 3D potential flow on the  half-space has a spatially-bounded von Karman plate embedded in the boundary. The physical model, then, is a Neumann wave equation with low regularity of coupling conditions. Motivated on empirical observations, we examine if intrinsic {\em panel damping} can stabilize the {\em subsonic} flow-plate system to a stationary point. Several partial results have been established through partial regularization of the model. Without doing so, classical approaches attempting to treat the given wave boundary data have fallen short, owing to the failure of the Lopatinski condition (in the sense of Kreiss, Sakamoto) and the associated regularity defect of the hyperbolic Neumann mapping. Here, we operate on the panel model as in the engineering literature with {\it no regularization or modifications}; we completely resolve the question of stability by demonstrating that {\em weak plate damping} strongly stabilizes system trajectories. This is accomplished by microlocalizing the wave data (given by the plate) and observing  an ``anisotropic''  {\em a microlocal compensation} by the plate dynamics {\em precisely where  the regularity of the 3D wave is compromsed} (in the characteristic sector). Several additional stability results for both wave and plate subsystems are established to ``push''  strong stability of the plate onto the flow. 
\vskip.15cm

\noindent {\em Key Terms}:  fluid-structure interaction, nonlinear plates, hyperbolic regularity,  Neumann-Dirichlet hyperbolic map, aeroelasticity, feedback stabilization, microlocal analysis
\vskip.15cm
\noindent {\em 2010 AMS}: 74F10, 74K20, 76G25, 35B40, 35G25, 37L15 \end{abstract}

\section{Introduction}
In this paper we resolve the notorious problem of stability of a 3D flow  through a damped 2D plate, acting only on a  bounded portion  of the wave boundary. 
The flow domain is the 3D half-space $\mathbb R^3_+$ and the two systems are strongly coupled at the interface $\Omega \subset \subset \partial \mathbb R^3_+$. The wave equation is fed Neumann-type data, written in terms of the material derivative of the plate, which itself obeys a (semilinear) von Karman plate equation. 
In this scenario, we ask the question: {\bf Does stability of the plate to equilibrium translate to stabilizability to equilibria for the wave?}

This question is quite nontrivial---in fact it has been an open problem for many years \cite{LBC96,chuey} for at least two immediate reasons. First, the interaction is of hybrid type, and effective propagation of dissipation/stabilization from 2D to 3D dynamics is a challenging, and typically not viable. Secondly, the 3D wave dynamics in this problem have no dissipative feature, other than dispersion to infinity.

\begin{center}
	\tikzset{every picture/.style={line width=0.75pt}} 
	
	\begin{tikzpicture}[x=0.75pt,y=0.75pt,yscale=-1,xscale=1]
		
		\draw    (155,227) -- (494,226.01) ;
		\draw [shift={(497,226)}, rotate = 539.8299999999999] [fill={rgb, 255:red, 0; green, 0; blue, 0 }  ][line width=0.08]  [draw opacity=0] (8.93,-4.29) -- (0,0) -- (8.93,4.29) -- cycle    ;
		\draw    (155,227) -- (154.02,39) ;
		\draw [shift={(154,36)}, rotate = 449.7] [fill={rgb, 255:red, 0; green, 0; blue, 0 }  ][line width=0.08]  [draw opacity=0] (8.93,-4.29) -- (0,0) -- (8.93,4.29) -- cycle    ;
		\draw    (155,227) -- (440.32,82.36) ;
		\draw [shift={(443,81)}, rotate = 513.12] [fill={rgb, 255:red, 0; green, 0; blue, 0 }  ][line width=0.08]  [draw opacity=0] (8.93,-4.29) -- (0,0) -- (8.93,4.29) -- cycle    ;
		\draw  [fill={rgb, 255:red, 155; green, 152; blue, 152 }  ,fill opacity=1 ] (304,186) .. controls (304,173.3) and (327.73,163) .. (357,163) .. controls (386.27,163) and (410,173.3) .. (410,186) .. controls (410,198.7) and (386.27,209) .. (357,209) .. controls (327.73,209) and (304,198.7) .. (304,186) -- cycle ;
		\draw [color={rgb, 255:red, 5; green, 5; blue, 5 }  ,draw opacity=1 ] [dash pattern={on 4.5pt off 4.5pt}]  (192,102) .. controls (270.61,36.33) and (364.06,158.76) .. (475.32,96.95) ;
		\draw [shift={(477,96)}, rotate = 510.26] [fill={rgb, 255:red, 5; green, 5; blue, 5 }  ,fill opacity=1 ][line width=0.08]  [draw opacity=0] (10.72,-5.15) -- (0,0) -- (10.72,5.15) -- (7.12,0) -- cycle    ;
		\draw [color={rgb, 255:red, 5; green, 5; blue, 5 }  ,draw opacity=1 ] [dash pattern={on 4.5pt off 4.5pt}]  (187,146) .. controls (263.62,83.31) and (379.83,193.88) .. (471.62,147.71) ;
		\draw [shift={(473,147)}, rotate = 512.45] [fill={rgb, 255:red, 5; green, 5; blue, 5 }  ,fill opacity=1 ][line width=0.08]  [draw opacity=0] (10.72,-5.15) -- (0,0) -- (10.72,5.15) -- (7.12,0) -- cycle    ;
		\draw [color={rgb, 255:red, 4; green, 4; blue, 4 }  ,draw opacity=1 ] [dash pattern={on 4.5pt off 4.5pt}]  (190,188) .. controls (265.62,129.29) and (365.99,240.87) .. (473.38,190.77) ;
		\draw [shift={(475,190)}, rotate = 514.29] [fill={rgb, 255:red, 4; green, 4; blue, 4 }  ,fill opacity=1 ][line width=0.08]  [draw opacity=0] (10.72,-5.15) -- (0,0) -- (10.72,5.15) -- (7.12,0) -- cycle    ;
		
		\draw (488,232.4) node [anchor=north west][inner sep=0.75pt]    {$x$};
		\draw (395,81.4) node [anchor=north west][inner sep=0.75pt]    {$y$};
		\draw (162,38.4) node [anchor=north west][inner sep=0.75pt]    {$z$};
		\draw (348,175.4) node [anchor=north west][inner sep=0.75pt]    {$\Omega $};
		\draw (233.03,112.57) node [anchor=north west][inner sep=0.75pt]  [rotate=-0.3]  {$U$};
	\end{tikzpicture}
\end{center}

The model of interest has been prevalent in the aeroelasticity literature since the 1950s; see \cite{bolotin, dowellnon,B,dowell1} for engineering perspectives, and \cite{book,LBC96,chuey,bal} for more recent mathematical perspectives. We discuss the applied nature (and associated physical phenomenon) in detail below. For now, consider:
\begin{equation}\label{sys}\begin{cases}
		u_{tt}+k_0u_t+\Delta^2u+f_{V}(u)= p_0+\big[\partial_t+U\partial_x\big]\phi\big|_{\Omega} & \text { in }~~ \Omega\times (0,T),\\
		u=\Dn u = 0 & \text{ on } ~~\partial\Omega\times (0,T),\\
		(\partial_t+U\partial_x)^2\phi=\Delta \phi & \text { in }~~ \realsthree_+ \times (0,T),\\
		\partial_z \phi = (\partial_t+U\partial_x)u& \text{ on } ~~\Omega \times (0,T),\\
		\partial_z \phi = 0 & \text{ on } ~~{\overline{\Omega}}^c\times (0,T).
	\end{cases}
\end{equation} 
Above, the term $f_V$ represents a plate semilinearity (of {\em von Karman} type), and the nonlinear plate is coupled to a 3D inviscid potential flow. The quantity $U \in \mathbb R$ is the (normalized) unperturbed flow velocity in the $x$-direction, with $|U|<1$ indicating that a flow is  {\em subsonic}. The dynamic pressure drives the plate through the aeroelastic potential \cite{balshub,bal,dowellnon} and the static pressure $p_0(\mathbf x)$: \begin{equation} \label{pressure} p(\mathbf x,t)=p_0+(\partial_t+U\partial_x)\phi\big|_{\Omega}.\end{equation} 
Our goal for the above model---with some damping $k_0>0$ and subsonic flow $|U|<1$---is to achieve strong stabilization to equilibria of the entire system. 

This is a challenging problem for a variety of reasons. The principal issue involves the embedded 3D Neumann (perturbed) wave equation, for which the uniform Lopatinski condition does not hold \cite{sakamoto,miyatake}; this amounts to a critical loss of regularity in lifting from boundary data to the wave solution on $\mathbb R^3_+$. This, of course, is central  here, as we must translate stability of the plate (obtained from the weak damping) through the Neumann  solver to the hyperbolic flow. Indeed, the given wave data lacks sufficient regularity to decouple the problem whilst working with finite energy solutions.  For nearly 30 years, this has been the central hindrance in working on this and related systems for both well-posedness and stability \cite{chuey,LBC96}. However, a microlocal approach (of Tataru \cite{tataru}) can be utilized to address non-Lopatinski models by sharply characterizing the loss of Neumann regularity.

Direct stabilization of the flow via the plate must contend with this loss, and classical approaches such transform methods and Kirchhoff solution representations have fallen short. Indeed, any PDE-stabilization approach based on a global analysis of wave propagation is bound to fail.
The work in this paper, on the other hand, takes an alternate approach: {\bf we perform a microlocalization of the data itself, which, for the first time fully resolves the issue of stabilizability of the whole system}.
Treating the data $u_t+Uu_x$ globally, finite energy considerations permit only $L^2(\Omega \times (0,T))$ regularity, rather than the requisite $H^{1/3}(\Omega \times (0,T))$ \cite{tataru}. However, the data here also satisfy a PDE that can itself be microlocalized. {\em The main idea, then, is to determine whether the loss of regularity for the wave equation can be ``contained'' to a microlocal sector where the plate data itself is microlocally compensating}. The answer, is yes, but the analysis requires subtle and technical considerations along the way. In particular, one must work both classically and microlocally on the wave-plate system, as well as a reduced flow-plate system with memory, where the plate dynamics are shown to have some attractiveness and compactness properties \cite{delay} (from the dynamical systems point of view \cite{springer,cdelay1}). Invoking these properties, and ``propagating'' them to non-Lopatinski microlocal sector permits global estimates that yield strong stabilization of the wave dynamics, resulting in stabilization to equilibria of the total flow-plate interaction.

\begin{remark} We note that, in general, $|U|>1$ is permitted above and represents a viable mathematical model for supersonic flows \cite{supersonic,book}. This is not the focus here, as the supersonic panel system is not expected to have stationary end behavior \cite{supersonic,B,HHWW}. \end{remark}

\subsection{Physical Stabilization Problem}
Model \eqref{sys} is referred to a {\em panel flutter} model, in that it is used  to capture the {\em limit cycle oscillations} (LCOs) of a post-flutter panel in a potential flow of sufficient velocity \cite{bolotin,dowellnon,B,dowellrecent}.  The onset of this instability emanates from the linear portion of the coupled flow-plate model. The presence of the  {\it nonlinear} elastic restoring force ensures that dynamics remain globally bounded \cite{book,springer}, resulting in LCOs when the problem is linearly unstable due to the presence of $U \neq 0$. The model \eqref{sys} also captures aeroelastic buckling---a static (nonlinear) instability \cite{bolotin,dowellnon,B}.

The dynamics of this system have been studied in the PDE literature over the past 30 years, including well-posedness \cite{b-c,b-c-1,LBC96,webster,supersonic}, attractors (and dynamical systems) \cite{delay,webster2,Abhishek,springer,cdelay1,cdelay2,cdelay3}, and asymptotic stability/feedback control \cite{chuey,springer,Abhishek,ryz,ryz2,eliminating1,eliminating2}. We also note the foundational mathematical work on the dynamical systems properties of 1D flutter systems in \cite{marsden1,marsden2,cdelay1,balshub,shubov}. Of all of these works one challenge, motivated by an empirical engineering observation, has remained elusive. On that topic, renowned aeroelastician, E. Dowell,  states \cite{dowellrecent}: \begin{quote} ...subsonic panel flutter is unusual in aerospace applications...{\em if the trailing edge, as well as the leading edge, of the panel is fixed, then divergence (a static aeroelastic instability) will occur rather than flutter}. \end{quote} Mathematically: {\bf Can one show---directly from these PDEs---that the end behavior of  panel dynamics is static if $|U|<1$?} 
\begin{remark} We note that in the case of supersonic flows, panels may certainly flutter \cite{book, B,dowellrecent,HHWW,hlw}. Additionally, if a large portion of the leading or trailing edge are free, structures may flutter  {\em even for low flow velocities} (small $U$) \cite{HHWW,htw,dowellrecent}.\end{remark}

Established results on long-time behavior of the flow-plate system in \eqref{sys} provide a global compact attractor {\em for the plate component of the dynamics} for any value of $U\neq \pm 1$ without any imposed mechanical damping \cite{delay}. In the presence of an additional---albeit weak---structural damping, can one stabilize the entire system? {This is precisely the result we obtain}, indicating that subsonic instability can only be static in nature---{\em divergence}, a form of aeroelastic buckling \cite{bolotin,dowellnon}. From a mathematical perspective, {\bf we achieve strong stabilization to a non-trivial equilibrium} solution \cite{haraux1,haraux2}, where: (1) the flow dynamics take place on an unbounded domain; (2) non-positive conserved quantities must be accommodated; (3) damping is weak and takes place only on a compact subset of the flow boundary $\mathbb R^2$.

There have been several attempts to show such a stabilization result. Ultimately, to circumvent the fundamental challenges  associated to the Neumann-wave equation embedded in these dynamics \eqref{sys}$_{3}$--\eqref{sys}$_{4}$, model modifications and regularizations have been utilized. Those results are described in detail below in Section \ref{mainresults}. Here, the only addition to the classical model \cite{bolotin,dowellnon} is weak structural damping \cite{haraux1}. We believe the requirement of {\em some} weak and arbitrarily small damping in the plate is  only realistic,  since all elastic materials have a degree of internal damping. (See Remark \ref{dampers} below.)

Another central difficulty in achieving strong (rather than just weak) stability is the nonlocal character of elastic nonlinearity $f_V$---a lack of the so called unique continuation property. Rather, the game-changer in the present analysis is due to a more refined {\it linear analysis} of the built-in Neumann-to-Dirichlet (N-to-D) hyperbolic map here. The compromised regularity of this map in the non-Lopatinski case is compensated for by  taking full advantage of the physics of the model. Namely, this wave-plate interaction compensates microlocally for the loss of fractional derivatives in the N-to-D mapping. This provides sufficiently strong estimates for the full flow-structure system {\em without direct decoupling}, unlike what has been used in previous attempts in the literature \cite{Abhishek,ryz,ryz2}. 
\begin{remark}\label{dampers}
There are several relevant structural damping models, each of which have been broadly considered in the context of stability and attractors for semilinear plates of the form 
$$(1-\alpha \Delta) u_{tt}+\Delta^2u+f(u)+\mathbf Du_t=f.$$ The parameter $\alpha\ge0$ represents rotational inertia in the filaments of the plate \cite{lagnese}, and is taken to be zero in this treatment, in line with original then plate models appearing in the engineering literature \cite{bolotin,dowellnon}. We include this remark in order to comment on the relation of damping type to inertia. The inclusion of $-\alpha \Delta u_{tt}$ in the model for $\alpha >0$  boosts $u_t$ by one Sobolev unit (for finite energy solutions). This would resolve several aforementioned  regularity issues, most notably the regularity of hyperbolic N-to-D map described above, which is the main antagonist in this work. On the other hand, $\alpha > 0$ hinders the transfer of stability effects (dispersion) from the {\em flow to the plate}, precluding the existence of the plate attractor without additional {\em stronger} damping \cite{delay}; this would disqualify a key component of our analysis here. Thus $\alpha >0 $ is not a good model with respect to a stability analysis, both from the mathematical and physical points of view. 
 \vskip.2cm\noindent
 Weak (frictional) damping: $\mathbf D=k_0\mathbf I$. This damping remains at the level of the kinetic energy (KE) for the $\alpha=0$ case. If $\alpha>0$, this strength of damping is insufficient to control the KE.
\noindent \vskip.2cm\noindent
Square-root type damping: $\mathbf D=-k_1\Delta$. This damping is at the level of the square root of the biharmonic operator. This is the natural strength of damping required to stabilize the velocity/KE when $\alpha>0$.  This damping is {\em just} strong enough to render plate the dynamics parabolic \cite{redbook}.
\noindent \vskip.2cm\noindent
Strong (Kelvin-Voigt) damping: $\mathbf D=k_2\Delta^2$. This damping is at the level of the biharmonic operator. One can observe that the inclusion of this damping boosts the velocity by two Sobolev units and makes the dynamics parabolic \cite{redbook} (and references therein).
\end{remark}

\subsection{Precise Mathematical Model}
We consider a linear, inviscid potential flow as a perturbation of constant flow of velocity $U$ in the $x_1$-direction. Relevant flow parameters are scaled so  $U=1$ corresponds to the local speed of sound (Mach 1).  The flow resides in $\realsthree_+ = \{\xb \in\realsthree\, :\, x_3> 0\}$, and we consider a bounded domain $\Omega$ embedded in $\partial \mathbb R^3_+$. The set $\Omega$ has smooth boundary $\Gamma$ (and associated unit outward normal $\nu$), and we  take $\Omega$ to represent the plate's center plane (w.r.t. $x_3$) at equilibrium (as is standard in plate theory). 
In this situation, $u: \Omega \times [0,\infty) \to \mathbb R$ corresponds to the transverse $(x_3)$ plate deflections; $\phi: \mathbb R^3_+ \times [0,\infty) \to \mathbb R$ is the perturbation velocity potential so the potential flow velocity $\mathbf v$ is given by $\mathbf v = U\mathbf e_1+\nabla \phi$.\footnote{The flow model itself is obtained through linearization of the non-rotational, compressible and barotropic Euler equation about $U\mathbf e_1$.} The evolution system of interest is:
\begin{equation}\label{flowplate}\begin{cases}
		u_{tt}+\Delta^2u+k_0u_t +f_V(u)= p_0+r_{\Omega}\gamma_0\big[(\partial_t+U\partial_{x_1})\phi \big]& \text { in }~~ \Omega\times (0,T),\\
		u(0)=u_0,~~u_t(0)=u_1 & \text{ in }~~\Omega,\\
		u=\Dn u = 0 & \text{ on } ~~\partial\Omega\times (0,T),\\
		(\partial_t+U\partial_{x_1})^2\phi=\Delta \phi & \text { in }~~ \realsthree_+ \times (0,T),\\
		\phi(0)=\phi_0,~~\phi_t(0)=\phi_1 & \text { in }~~ \realsthree_+\\
		\partial_{x_3} \phi = [(\partial_t+U\partial_{x_1})u]_{\text{ext}}& \text{ on } ~~\Omega \times (0,T).
	\end{cases}
\end{equation} 
Above, the biharmonic expression $\Delta^2=[\Delta_{x_1,x_2}]^2$ in \eqref{flowplate}$_1$ is the iterated 2D Laplacian, while in the flow equation \eqref{flowplate}$_4$, $\Delta=\Delta_{x_1,x_2,x_3}$ is the 3D Laplacian. The vector $-\mathbf e_3$ is the unit outward normal for $\mathbb R^3_+$ (so $\partial_{\mathbf n} = -\partial_{x_3}$).
The notation $[\cdot]_{\text{ext}}$ above means extension by zero from $\Omega \to \mathbb R^2$ with corresponding restriction $r_{\Omega}[\cdot]$. The standard (Dirichlet) trace operator is denoted by $\gamma_0[\cdot]$ onto $\Gamma$ or $\{x_3=0\}$, of course in the appropriate functional sense. 
 Above, parameters such as mass, density, thickness, and stiffness have been scaled out. The remaining parameters are those relevant to this mathematical analysis: $|U| \in [0,1)$, as described above, and $k_0 \ge 0$ as the frictional/weak damping coefficient. The function $p_0 \in L^2(\Omega)$ is a stationary pressure on the top surface of the plate.

The (scalar) von Karman nonlinearity $f_V$ \cite{lagnese,ciarlet,springer} is given through the von Karman bracket and the Airy stress function. The {\em von Karman bracket} is \begin{equation} \label{bracket} [u,w]=u_{x_1x_1}w_{x_2x_2}+u_{x_2x_2}w_{x_1x_1}-2u_{x_1x_2}w_{x_1x_2},\end{equation}
while the Airy function is defined as an elliptic solver, namely, $v=v(u)$ is the solution to
\begin{equation}
\Delta^2 v = -[u,u]~~  \text{in}~~\Omega~~,~~
v=\partial_{\nu}v=0 ~~ \text{on}~~\Gamma.
\end{equation}
Finally, letting $F_0(\mathbf x)$ represent a stationary planar force on $\Omega$ (pre-stressing), we have {\em the von Karman nonlinearity}
~$ f_V(u)=-[u,v(u)+F_0].$~ We will take $F_0 \in H^4(\Omega)$, and describe the functional properties of $f_V$ below.

We utilize the standard notation and conventions for $L^p(\mathscr O)$ spaces and Sobolev spaces of order $s \in \mathbb R$, $H^s(\mathscr O)$ where $\mathscr O$ is some regular domain. The space $H_0^s(\Omega)$ denotes the completion of the test functions $C_0^{\infty}(\Omega)$ in the $H^s(\Omega)$ norm, with dual $[H_0^s(\Omega)]'=H^{-s}(\Omega)$. We will denote $||\cdot||_{H^s(\mathscr O)} = ||\cdot||_s$, where the spatial domain will be clear from context; we will identify $||\cdot||_{L^2(\mathscr O)}=||\cdot||$, omitting $s=0$. Inner products on $\mathbb R^3_+$ will be denoted by $(\cdot,\cdot) := (\cdot,\cdot)_{L^2(\mathbb R_+^3)}$ and on $\partial \mathbb R^3_+$ we utilize the notation $\langle \cdot,\cdot\rangle := (\cdot,\cdot)_{L^2(\Omega)}.$ The Dirichlet trace operator that we utilize is continuous and surjective $\gamma_0[\cdot]: H^1(\mathscr O)\to H^{1/2}(\partial \mathscr O)$. We denote an open ball of radius $R$ in a Banach space $X$ by $B_R(X)$. We freely identify $\mathbf x = (x_1,x_2,x_3)=(x,y,z)$, using whichever notation is most convenient in exposition.

\subsection{Energies and Functional Setup}

We introduce $W_k(\realsthree_+)$ as the homogeneous Sobolev space of the form $$W_k(\mathbb R^3_+) \equiv \Big\{\phi(\mathbf x) \in L^2_{loc}(\realsthree_+) ~:~ ||\phi||^2_{W_k}\equiv \sum_{j=0}^{k-1}\|\nabla \phi \|^2_{j,\realsthree_+} \Big\}.$$ In this way, the natural space associated with the flow potential variable $\phi$ is $W_1(\mathbb R^3_+)$ (i.e., this is the subspace of $L^2_{loc}(\mathbb R^3_+)$ taken with the {\em gradient norm} $||\nabla \phi||_{0,\realsthree_+}$). For the plate, the natural space associated to clamped boundary conditions is $H_0^2(\Omega)$.

In order to work in a dynamical systems framework, we take the principal state space to be $$Y = Y_{fl}\times Y_{pl} \equiv \big(W_1(\realsthree_+) \times L^2(\realsthree_+)\big)\times\big(H_0^2(\Omega) \times L^2(\Omega)\big),$$
 As seen in \cite{webster,supersonic,book}, we can consider a stronger space {\em on finite time intervals}: $$Y_s \equiv H^1(\realsthree_+)\times L^2(\realsthree_+) \times  H_0^2(\Omega) \times L^2(\Omega).$$

As we are only considering $|U| \in [0,1)$, flow terms involving $U\partial_x$ constitute a perturbation of the wave dynamics on $\realsthree_+$. Thus, the flow multiplier $\phi_t$ and plate multiplier $u_t$ yield:
\begin{align}
E_{pl}(u) = & ~\dfrac{1}{2}\big[||u_t||_{0,\Omega}^2+ ||\Delta u ||_{0,\Omega}^2+\frac{1}{2}||\Delta v(u)||_{0,\Omega}^2\big]-\langle F_0,[u,u]\rangle_{\Omega}+\langle p_0,u\rangle_{\Omega} \\
E_{fl}(\phi) =  &~\dfrac{1}{2}\big[||\phi_t||^2_{0,\realsthree_+}+||\nabla \phi||_{0,\realsthree_+}^2-U^2||\phi_x||^2_{0,\realsthree_+}\big],~~\\ E_{int} (u,\phi)= &  ~2U\langle\phi,u_x\rangle_{\Omega};~~\\[.1cm] \mathcal E(u,\phi) =&~ E_{pl}(u)+E_{fl}(\phi)+E_{int}(u,\phi),
\end{align}
where $E_{int}$ represents the non-positive, but conserved, {\em interactive} energy which will be addressed below. For the clamped plate, we invoke the equivalence of the full $H^2(\Omega)$ norm and $||\Delta \cdot ||_{0,\Omega}$. 

The formal energy identity (later obtained rigorously in Theorem \ref{nonlinearsolution}) is
\begin{equation}\label{formal} \mathcal E(t) +k_0\int_0^t||u_t||_0^2d\tau = \mathcal E(0).\end{equation} 
\begin{remark}  $E_{int}$ is unsigned  so it is not clear from \eqref{formal} that trajectories are time-bounded. \end{remark}

 \subsection{Definition of Solutions}
 A pair of functions $(u, \phi)$ such that 
\begin{equation}\label{platereq}
u \in C([0,T]; H_0^2(\Omega))\cap C^1([0,T];L^2(\Omega)),~
\phi \in C([0,T]; H^1(\realsthree_+))\cap C^1([0,T];L^2(\realsthree_+)) \end{equation} is said to be a {\em strong solution} to \eqref{flowplate} on $[0,T]$ if:
\begin{itemize}
\item $(\phi_t,u_t) \in L^1(a,b; H^1(\realsthree_+)\times H_0^2(\Omega))$ for any $(a,b) \subset [0,T]$.
\item $(\phi_{tt},u_{tt}) \in L^1(a,b; L^2(\realsthree_+)\times L^2(\Omega))$ for any $(a,b) \subset [0,T]$.
\item $\phi(t)\in H^2(\realsthree_+)$ and $\Delta^2 u(t) \in L^2(\Omega)$ for a.e. $t\in [0,T]$.
\item The equation ~~$u_{tt}+k_0u_t+\Delta^2 u +f_V(u)=p(\xb,t)$ holds in $L^2(\Omega)$ for a.e. $t >0$.
\item The equation ~~$(\partial_t + U\partial_x)^2\phi=\Delta \phi$ holds in $L^2(\mathbb R^3_+)$ and a.e. $t>0$.
\item The boundary conditions in \eqref{flowplate} hold for a.e. $t\in [0,T]$ and a.e. $\xb \in \partial \Omega$, $\xb \in \realstwo$ respectively. 
\item The initial conditions $\phi(0)=\phi_0, ~\phi_t(0)=\phi_1, ~u(0)=u_0, ~u_t(0)=u_1$ hold a.e. $\xb$.
\end{itemize}

A pair of functions $(\phi,u)$ is said to be a {\em generalized solution} to  problem (\ref{flowplate}) on  $[0,T]$ if there exists a sequence of strong solutions $(\phi^n(t);u^n(t))$ each with data $(\phi^n_0,\phi^n_1; u^n_0; u^n_1)$ such that
$$\lim_{n\to \infty} \max_{t \in [0,T]} \Big\{||\partial_t\phi-\partial_t \phi^n(t)||_{L^2(\realsthree_+)}+||\phi(t)-\phi^n(t)||_{H^1(\realsthree_+)}\Big\}=0$$ and
$$\lim_{n \to \infty} \max_{t \in [0,T]} \Big\{||\partial_t u(t)-\partial_t u^n(t)||_{L^2(\Omega)} + ||u(t) - u^n(t)||_{H_0^2(\Omega)}\Big\}=0.$$

Lastly,  a pair of functions $(\phi,u)$ such that 
\begin{align*}
u(\xb,t) \in \mathscr{W}_T\equiv \big\{ u \in L^{\infty}\big(0,T;H_0^2(\Omega)\big),~\partial_t u(\xb,t)\in L^{\infty}\big(0,T;L^2(\Omega)\big)\big\},\\
\phi(\xb,t) \in \mathscr{V}_T\equiv \big\{ \phi \in L^{\infty}\big(0,T;H^1(\realsthree_+)\big),~\partial_t \phi(\xb,t)\in L^{\infty}\big(0,T;L^2(\realsthree_+)\big)\big\},
\end{align*} is said to be a {\em weak solution} to \eqref{flowplate} on $[0,T]$ if:
\begin{itemize}
\item The initial conditions $\phi(0)=\phi_0, ~\phi_t(0)=\phi_1, ~u(0)=u_0, ~u_t(0)=u_1$ hold a.e. $\xb$.
\item$\ds 
\int_0^T\Big[\big\langle\partial_t u(t),\partial_t w(t)\big\rangle_{L^2(\Omega)}-\big\langle\Delta u(t),\Delta w(t)\big\rangle_{L^2(\Omega)}
- \big\langle f(u(t))-p_0,w(t)\big\rangle_{L^2(\Omega)}\\ \phantom{\hskip4cm}-\big\langle \gamma_0[\phi(t)],\partial_t w(t)+U\partial_x w(t)\big\rangle_{L^2(\Omega)}\Big]~dt
=\big\langle u_1-\gamma_0[\phi_0],w(0)\big\rangle_{L^2(\Omega)}$
 \newline for all test functions $w\in \mathscr{W}_T$ with $w(T)=0$. 
\item $\ds \int_0^T \Big[(\partial_t+U\partial_x)\phi(t),(\partial_t+U\partial_x)\psi(t)\big)_{L^2(\realsthree_+)}-\big( \nabla \phi(t),\nabla \psi(t)\big)_{L^2(\realsthree_+)}\\ \phantom{\hskip4cm}+\big((\partial_t+U\partial_x)u(t),\gamma[\psi(t)]\big)_{L^2(\Omega)}\Big]dt
=\big(\phi_1+U\partial_x \phi_0,\psi(0)\big)_{L^2(\realsthree_+)}$ \newline for all test functions $\psi\in\mathscr{V}_T$ such that $\psi(T)=0$. 
\end{itemize}

From the semigroup point of view, generalized solutions will correspond to semigroup solutions for an initial datum in $Y$.  It has been shown that  the nonlinearity $f_V$ (and other related nonlinearities \cite{supersonic,delay}) is locally Lipschitz from $H^2_0(\Omega)$ into $L^2(\Omega)$---see \cite{FHLT,FHLT1} and Lemma \ref{l:airy-1}. As a consequence  it is straightforward to see that generalized solutions are in fact {\em weak}. This weak solution is in fact unique (see \cite{FHLT,FHLT1} and  \cite[Chapter 6]{springer}).

\subsection{Well-Posedness of Solutions and Dynamical Systems}
This result follows from \cite{webster}. See also \cite{book,survey1,supersonic,eliminating1}.
\begin{theorem}[{\bf Nonlinear Semigroup}]
\label{nonlinearsolution} Assume $|U|<1$, $p_0 \in L^2(\Omega)$ and $F_0 \in H^{4}(\Omega)$.  Then for any $~T>0$,  \eqref{flowplate} has a unique strong (resp. generalized---and hence weak) solution on $[0,T]$, denoted by $S_t (y_0) $, for initial data $y_0=(\phi_0,\phi_1;u_0,u_1) \in Y_s$. (In the case of strong solutions, the natural compatibility condition must be in force on the data ~$\partial_z \phi_0 = (u_1+Uu_{0x})_{\text{ext}}$.)
Moreover,  $(S_t,Y)$ is a dynamical system.
 Weak (hence generalized and strong) solutions satisfy 
 \begin{equation}\label{eident}\ds {\mathcal{E}}(t)+k_0\int_s^t ||u_t(\tau)||^2  d\tau= {\mathcal{E}}(s),~~\forall t \ge s \ge 0.\end{equation}
Moreover, the solution $S_t(y_0)$ is stable in the norm of ~$Y$, i.e.,  $\exists~~C(||y_0||_{Y})$ so that:
\begin{equation}\label{stableS}
 \|S_t (y_0)\|_Y \leq C \left(\|y_0\|_{Y}\right),~~\forall~t \ge 0.
 \end{equation}
In addition, the  semigroup $S_t$ is locally Lipschitz on $Y$ on finite time horizons:
\begin{equation}\label{lip}
||S_t(y_1) - S_t(y_2) ||_Y \leq C (R,T) ||y_1-y_2||_Y,~~ \forall ~||y_i||_Y \leq R,~~ t \leq T. 
\end{equation}
\end{theorem}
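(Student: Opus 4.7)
The plan is to recast the system \eqref{flowplate} as an abstract semilinear Cauchy problem $y_t = \mathcal{A}y + \mathcal{F}(y)+\mathbf{p}$ on the state space $Y$ (with finer regularity in $Y_s$ for strong solutions), where $\mathcal{A}$ is the linear flow--plate generator (with $f_V$ and $p_0$ stripped), $\mathcal{F}$ encodes the von Karman term, and $\mathbf{p}$ is the constant forcing from $p_0$. The subsonic assumption $|U|<1$ is what enables this framework, since then the flow quadratic form $\|\nabla \phi\|^2-U^2\|\phi_{x_1}\|^2$ is coercive and drives the energy-based semigroup theory.

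First I would establish that $\mathcal{A}$ generates a $C_0$-group on $Y$ via Lumer--Phillips. Testing the wave by $\phi_t$ and the plate by $u_t$ reveals that the coupling terms telescope: the Neumann flux $\partial_{x_3}\phi=(\partial_t+U\partial_{x_1})u$ paired with $\phi_t$ and the aeroelastic pressure $(\partial_t+U\partial_{x_1})\phi|_\Omega$ paired with $u_t$ combine into $\tfrac{d}{dt}E_{int}$. After the change of variable $\psi=\phi_t+U\phi_{x_1}$, which symmetrises the material derivative, dissipativity is immediate, and surjectivity of $\lambda I-\mathcal{A}$ reduces to a coupled stationary Helmholtz/biharmonic problem that is well-posed for $|U|<1$. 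Local-in-time well-posedness of the nonlinear system then follows from a standard Picard iteration, since by Lemma \ref{l:airy-1} the map $f_V:H_0^2(\Omega)\to L^2(\Omega)$ is locally Lipschitz. Strong solutions arise for data in $\mathcal{D}(\mathcal{A})\subset Y_s$ subject to the compatibility $\partial_{x_3}\phi_0=(u_1+Uu_{0,x_1})_{\mathrm{ext}}$; generalised solutions are obtained by $Y$-density, and the local Lipschitz structure of the nonlinearity forces them to be weak and unique.

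The energy identity \eqref{eident} is then derived by multiplying the plate equation by $u_t$ and the flow equation by $\phi_t$ for strong solutions, using the coupling cancellation together with $2U\langle\phi_{x_1},u_t\rangle_\Omega+2U\langle\phi_t,u_{x_1}\rangle_\Omega=\tfrac{d}{dt}E_{int}$, and then passing to generalised/weak solutions by density. The global bound \eqref{stableS} is the subtle step: since $E_{int}=2U\langle\gamma_0\phi,u_{x_1}\rangle_\Omega$ is unsigned, \eqref{eident} by itself does not close. The remedy is a trace estimate of the form $|E_{int}|\le \varepsilon\|\nabla\phi\|_{0,\realsthree_+}^2+C_\varepsilon\|u\|_2^2$ combined with the $|U|<1$ coercivity of $E_{fl}$ and the superlinear coercivity $\tfrac14\|\Delta v(u)\|_{0,\Omega}^2\gtrsim \|u\|_2^4-C$ coming from the Airy function, which absorbs the $L^2$-level interactive and forcing terms and yields a uniform in-time bound on $\mathcal{E}$ and hence on $\|S_ty_0\|_Y$. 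The local Lipschitz estimate \eqref{lip} follows from the variation-of-constants formula, the Lipschitz bound on $f_V$ on $Y$-balls, and a Gronwall argument.

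The principal obstacle is the linear generation step, and more precisely the low regularity of the Neumann coupling: the boundary datum $(u_t+Uu_{x_1})_{\mathrm{ext}}$ for the flow is only $L^2$ at the energy level, and as $|U|\to 1$ one loses the Lopatinski condition so that the stationary resolvent problem for $\lambda I-\mathcal{A}$ becomes ill-conditioned. Restricting to $|U|<1$ strictly avoids this degeneracy, but establishing resolvent surjectivity uniformly in the appropriate parameter range---so that Hille--Yosida/Lumer--Phillips actually applies and the semigroup extends to all of $Y$---is the technical crux; it is carried out in \cite{webster} through the $\psi$-reformulation together with a lifting of $L^2$ Neumann data via an auxiliary harmonic extension.
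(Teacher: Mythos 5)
The paper does not give its own proof of Theorem \ref{nonlinearsolution}; it simply states that ``this result follows from \cite{webster}'' (with secondary pointers to \cite{book,survey1,supersonic,eliminating1}), so there is no in-paper argument to compare against. Your plan follows the same semigroup-theoretic strategy as that reference: Lumer--Phillips generation of the linear flow--plate group, treated \emph{without} decoupling the two subsystems (which, as you correctly identify, would run straight into the failed Lopatinski condition since the Neumann datum $[u_t+Uu_{x_1}]_{\mathrm{ext}}$ is only $L^2$ at the energy level); a fixed-point/Picard step using the local Lipschitz property of $f_V$ from Lemma \ref{l:airy-1}; and $Y$-density to go from strong to generalized solutions, with the energy identity \eqref{eident} derived for strong solutions and passed to the limit.

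There is, however, one substantive inaccuracy in the step toward \eqref{stableS}. You invoke a coercivity of the form $\tfrac14\|\Delta v(u)\|_{0,\Omega}^2 \gtrsim \|u\|_2^4-C$. This estimate is false for the clamped von Karman Airy potential. If $g_n$ is a normalized $H_0^2(\Omega)$ sequence built by oscillating in one direction (e.g.\ $g_n$ proportional to $n^{-2}\sin(n x_1)\chi(x_1,x_2)$ for a fixed smooth cutoff $\chi$, rescaled to unit $H^2$ norm), then $[g_n,g_n]=2\det(\nabla^2 g_n)\to 0$ in $L^2(\Omega)$, hence in $H^{-2}(\Omega)$, so $\|\Delta v(g_n)\|_{0,\Omega}^2 = \langle [g_n,g_n],v(g_n)\rangle_\Omega\to 0$. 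Setting $u_n=n\,g_n$ gives $\|u_n\|_2=n$ while $\|\Delta v(u_n)\|_{0,\Omega}=n^2\|\Delta v(g_n)\|_{0,\Omega}=o(n^2)$, so the claimed bound fails. What the paper actually relies on (Lemma \ref{energybound}, via Proposition \ref{epsilonlemma}) is the much weaker $\varepsilon$-absorption inequality
\begin{equation*}
\|u\|^2_{2-\eta}\;\le\;\epsilon\big[\|\Delta u\|_0^2+\|\Delta v(u)\|_0^2\big]+M_{\eta,\epsilon},
\end{equation*}
which uses \emph{both} the bending and Airy potentials and is obtained by compactness--uniqueness together with a Monge--Amp\`ere maximum principle, not by a pointwise quartic lower bound on $\|\Delta v(u)\|$. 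Combined with the Hardy-type trace control of $E_{int}$ from Proposition \ref{littleest} and the $|U|<1$ coercivity of $E_{fl}$, this yields the two-sided bound \eqref{estim} and hence \eqref{stableS}. Replacing your quartic coercivity with Proposition \ref{epsilonlemma} repairs the argument; the remainder of your outline is consistent with the approach in the cited reference.
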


\subsection{Stationary Problem}

	Since our main result involves convergence to the set of stationary solutions, we provide a brief discussion thereof.
	The stationary problem associated to \eqref{flowplate} has the form:
\begin{equation}\label{static}
\begin{cases}
\Delta^2u+f_V(u)=p_0+Ur_{\Omega}\gamma_0[\partial_x \phi]& \xb \in \Omega\\
u=\Dn u= 0 & \xb \in \Gamma\\
\Delta \phi -U^2 \partial_{x_1}^2\phi=0 & \xb \in \realsthree_+\\
 \phi_{x_3} = U\partial_{x_1} u_{\text{ext}}  & \xb \in \partial \realsthree_+
\end{cases}
\end{equation}
This problem has been studied extensively, most recently in \cite[Section 6.5.6]{springer}. 
	A {\em weak solution to \eqref{static}} is defined as a pair $\left(\phi,u \right)\in  W_1(\realsthree_{+})\times H_0^2(\Omega)$ such that 
\[
\langle \Delta {u}, \Delta w\rangle_{\Omega}- \langle \left[ {u}, v({u}) + F_0 \right], w \rangle_{\Omega} +  U \langle \gamma \left[ {\phi} \right], \partial_{x_1} w \rangle_{\Omega}  = \langle p_0, w\rangle_{\Omega}
\]
and 
\[
(\nabla {\phi}, \nabla \psi)_{\mathbb{R}_{+}^3} - U^2 (\partial_{x_1} {\phi}, \partial_{x_1} \psi)_{\mathbb{R}_{+}^3} + U\langle \partial_{x_1} {u}, \gamma [\psi]\rangle_{\Omega} = 0,
\]
for all test functions $(\psi,w) \in W_1(\mathbb R^3_+)\times H_0^2(\Omega)$. 
We have the following theorem  \cite[Theorem 6.5.10]{springer}:
\begin{theorem}\label{statictheorem}
	Suppose $0 \le |U| <1$ with $p_0 \in L^2(\Omega)$ and $F_0 \in H^3(\Omega)$. Then a {\em weak} solution to \eqref{static} exists and satisfies the additional regularity property ~$(\phi,u) \in W_2(\realsthree_+)\times H^4(\Omega)\cap H_0^2(\Omega).$ Such solutions are extremal points of the potential energy functional $$P(u,\phi) = \frac{1}{2}\|\Delta u\|_{\Omega}^2+\Pi(u)+\frac{1}{2}\|\nabla \phi\|_{\realsthree_+}^2-\dfrac{U^2}{2}\|\partial_{x_1}\phi\|_{\realsthree_+}^2 + U\langle \partial_{x_1} u, \gamma_0[\phi]\rangle_{\Omega},$$ considered for arguments $\left(\phi,u \right)\in  W_1(\realsthree_{+})\times H_0^2(\Omega)$. \end{theorem}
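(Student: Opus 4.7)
The plan is to apply the direct method of the calculus of variations to the potential energy functional $P(u,\phi)$ on the space $X \equiv W_1(\mathbb R^3_+) \times H_0^2(\Omega)$. First I would verify that $P$ is Gateaux differentiable on $X$ and that its critical points are precisely the weak solutions of \eqref{static}: each quadratic term differentiates into the corresponding bilinear form in the weak formulation, while the Airy-quartic contribution $\tfrac{1}{4}\|\Delta v(u)\|^2$ differentiates into the von Karman term $-\langle [u,v(u)], w\rangle_\Omega$ via the standard identity obtained from differentiating the constraint $\Delta^2 v(u) = -[u,u]$. Thus a minimizer of $P$ is automatically a weak solution, and every weak solution is an extremal point.

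The direct method requires coercivity and weak lower semicontinuity. For coercivity, since $|U|<1$ the flow form satisfies $\tfrac{1}{2}\|\nabla\phi\|^2 - \tfrac{U^2}{2}\|\partial_{x_1}\phi\|^2 \ge \tfrac{1-U^2}{2}\|\nabla\phi\|^2$. The indefinite coupling $U\langle\partial_{x_1}u,\gamma_0[\phi]\rangle_\Omega$ is bounded by $C|U|\,\|u\|_{H^2(\Omega)}\|\nabla\phi\|_{L^2(\mathbb R^3_+)}$ via the trace theorem for $W_1(\mathbb R^3_+)$ applied on the bounded patch $\Omega\subset \partial\mathbb R^3_+$, and Young's inequality absorbs it at the cost of a quadratic $C\|u\|_{H^2}^2$ on the plate side. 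This absorbed quadratic, the negative pre-stressing $-\langle F_0,[u,u]\rangle_\Omega$, and the linear $\langle p_0,u\rangle_\Omega$ are all dominated by the Airy quartic $\tfrac{1}{4}\|\Delta v(u)\|^2$, which scales like $\|u\|_{H^2}^4$ at infinity (cf.~Lemma \ref{l:airy-1}); hence $P$ is coercive and bounded below on $X$. For weak lower semicontinuity the quadratic parts are convex and continuous; the nonlinear plate terms follow from a Rellich compactness argument: along a minimizing sequence $u_n\rightharpoonup u$ in $H_0^2(\Omega)$ one has strong $H^1(\Omega)$ convergence, so $[u_n,u_n]\to[u,u]$ strongly in $H^{-2}(\Omega)$, whence $v(u_n)\to v(u)$ strongly in $H_0^2(\Omega)$ and both $\|\Delta v(u_n)\|^2$ and $\langle F_0,[u_n,u_n]\rangle_\Omega$ converge to their limits. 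The direct method then produces a minimizer, which is the required weak solution.

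For the additional regularity, I would bootstrap across the two equations. Given a weak solution $(\phi,u)$ with $u\in H_0^2(\Omega)$, the Neumann flux $[U\partial_{x_1}u]_{\text{ext}}$ lies in $H^{1/2}(\partial\mathbb R^3_+)$ with compact support, and the flow operator $\Delta-U^2\partial_{x_1}^2$ is uniformly elliptic since $|U|<1$. Elliptic Neumann regularity for this constant-coefficient problem on the half-space then gives $\phi\in W_2(\mathbb R^3_+)$, whence $r_\Omega\gamma_0[\partial_{x_1}\phi]\in L^2(\Omega)$. With Lemma \ref{l:airy-1} ensuring $f_V(u)\in L^2(\Omega)$, the plate equation becomes $\Delta^2u = p_0+Ur_\Omega\gamma_0[\partial_{x_1}\phi]-f_V(u)\in L^2(\Omega)$ with clamped data, and standard biharmonic regularity yields $u\in H^4(\Omega)\cap H_0^2(\Omega)$. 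The main obstacle is the coercivity step: carefully balancing the indefinite coupling via the $W_1$ trace theorem against the von Karman quartic on the plate, together with setting up the trace and elliptic Neumann theory correctly on the unbounded half-space through the homogeneous space $W_1(\mathbb R^3_+)$.
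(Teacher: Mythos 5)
The paper itself does not prove Theorem \ref{statictheorem}: it is quoted verbatim from \cite[Theorem 6.5.10]{springer}, so there is no ``paper's own proof'' to compare against. Your direct method of the calculus of variations is the standard route used in that reference and is the correct overall strategy. There is, however, a genuine gap in your coercivity argument.

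You assert that the absorbed quadratic, the pre-stressing $-\langle F_0,[u,u]\rangle_\Omega$, and the linear $\langle p_0,u\rangle_\Omega$ are ``all dominated by the Airy quartic $\tfrac14\|\Delta v(u)\|^2$, which scales like $\|u\|_{H^2}^4$ at infinity (cf.~Lemma~\ref{l:airy-1}).'' Lemma~\ref{l:airy-1} gives only the \emph{upper} bound $\|v(u)\|_{W^{2,\infty}}\le C\|u\|_2^2$; it gives no lower bound on $\|\Delta v(u)\|$, and in fact $\|\Delta v(u)\|$ does not control $\|u\|_{H^2}$ from below (the map $u\mapsto v(u)$ degenerates along directions with vanishing Gaussian curvature). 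An upper bound cannot absorb negative terms. The ingredient that actually makes coercivity work here is Proposition~\ref{epsilonlemma}: for every $\epsilon,\eta>0$, $\|u\|^2_{2-\eta}\le\epsilon\big[\|\Delta u\|^2+\|\Delta v(u)\|^2\big]+M_{\eta,\epsilon}$, an estimate the paper explicitly flags as coming from a nontrivial compactness--uniqueness argument together with a Monge--Amp\`ere maximum principle. Relatedly, you bound the coupling by $C|U|\,\|u\|_{H^2(\Omega)}\|\nabla\phi\|$, but that is both unnecessary and harmful: the coupling pairs $\gamma_0[\phi]$ only against $\partial_{x_1}u$, so the Hardy-type trace inequality \eqref{tracebound} gives $|U\langle\partial_{x_1}u,\gamma_0[\phi]\rangle_\Omega|\le\delta\|\nabla\phi\|^2+C_\delta\|u\|^2_{H^1(\Omega)}$, leaving a \emph{lower-order} residual which, together with the interpolated bound $|\langle F_0,[u,u]\rangle_\Omega|\le\delta\|\Delta u\|^2+C_\delta\|u\|^2_{2-\eta}$, is absorbable by $\tfrac12\|\Delta u\|^2+\tfrac14\|\Delta v(u)\|^2$ via Proposition~\ref{epsilonlemma}. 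Had you used the full $H^2$ norm as written, the Young residual $C\|u\|_{H^2}^2$ is not lower order and cannot be absorbed this way. Substituting Proposition~\ref{epsilonlemma} for the misdirected appeal to Lemma~\ref{l:airy-1}, and tightening the coupling bound to $H^1$, closes the gap. Your weak lower-semicontinuity step (which tacitly uses the divergence structure $[w,z]=\partial_{x}(w_x z_{yy}-w_y z_{xy})+\partial_{y}(w_y z_{xx}-w_x z_{xy})$ to get $[u_n,u_n]\to[u,u]$) and the regularity bootstrap are essentially correct.
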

	We denote  $\mathcal N$ as the set of stationary solutions above, and note that, in general, $\mathcal N$ has multiple elements \cite{springer,ciarlet}. 
 But, for given loads $F_0$ and $p_0$, the set of stationary solution is generically finite \cite[Theorem 1.5.7 and Remark 6.5.11]{springer}. 

\section{Main Results and Technical Discussion}\label{mainresults}
In this section we state the main results in this treatment, with some  more technical discussion of this paper in relation to what is already established in the literature. Below, we are clear to point out what contributions are novel in this manuscript---finally resolving the issue of strong stabilization---and what is invoked from previous work. The central references  are \cite{ryz,ryz2,eliminating1,eliminating2,springer,Abhishek}.
	\begin{theorem}\label{regresult}
Let $U \in (-1,1)$ and $k_0>0$.  Assume $p_0 \in L^2(\Omega)$ and $F_0 \in H^{4}(\Omega)$. Then any  solution $(\phi(t),\phi_t(t); u(t),u_t(t))$ to \eqref{flowplate} with 
 data
$
(\phi_0,\phi_1;u_0,u_1) \in Y
$
that are
 spatially localized in the flow component (there is $\rho_0>0$ so that: $|\xb|\ge \rho_0$ $\implies$ $\phi_0(\xb) = \phi_1(\xb)=0$) has that 
 \begin{align*}\lim_{t \to \infty} \inf_{(\hat u,\hat \phi) \in \mathcal N}\left\{\|u(t)-\hat u\|^2_{H_0^2(\Omega)}+\|u_t(t)\|^2_{L^2(\Omega)}+\|\phi(t)-\hat \phi\|_{W_1( K_{\rho} )}^2+\|\phi_t(t)\|^2_{L^2( K_{\rho} )} \right\}=0
 \end{align*} 
 for any   $\rho>0$, where 
 $K_\rho\equiv \{ \xb\in\R_+^3\, : |\xb| \le \rho\}$ and
 $\mathcal N$ is as above.\end{theorem}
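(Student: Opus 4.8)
\emph{The plan is to} first establish strong stabilization of the plate component and then transport it, through the (non-Lopatinski) hyperbolic Neumann-to-Dirichlet map, onto the flow on bounded sets $K_\rho$. \textbf{First}, I would reduce \eqref{flowplate} to a von Karman plate equation with finite memory: since the flow data vanish for $|\mathbf{x}|\ge\rho_0$, finite speed of propagation for \eqref{flowplate}$_4$ permits one to represent the downwash $r_\Omega\gamma_0[(\partial_t+U\partial_x)\phi]$ felt by the plate, for $t$ past a fixed time $t^\sharp=t^\sharp(\rho_0,\mathrm{diam}\,\Omega,U)$, explicitly via a Kirchhoff/Poisson representation of the Neumann solver in terms of the recent plate history $\{u(s):t-t^\sharp\le s\le t\}$. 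This produces a reduced equation of the schematic form $u_{tt}+k_0u_t+\Delta^2 u+f_V(u)=p_0-(\partial_t+U\partial_x)\,q(u^t)$ with $q$ compact/smoothing as in \cite{delay}. From the energy identity \eqref{eident} and the a priori bound \eqref{stableS}, trajectories are bounded in $Y$ and $\int_0^\infty\|u_t(\tau)\|^2\,d\tau<\infty$; combined with a bound on $u_{tt}$ along bounded trajectories this forces $\|u_t(t)\|\to 0$.

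\textbf{Second}, I would prove precompactness of $\{(u(t),u_t(t)):t\ge0\}$ in $H_0^2(\Omega)\times L^2(\Omega)$ using a quasi-stability/stabilizability inequality for the reduced plate-with-memory system, absorbing the compact von Karman nonlinearity (Lemma \ref{l:airy-1}) and exploiting the smoothing of $q$. With $u_t\to0$ and the gradient structure of the stationary functional $P$ from Theorem \ref{statictheorem}, any sequence $t_n\to\infty$ then has a subsequence along which $(u(t_n),u_t(t_n))\to(\bar u,0)$ with $\bar u$ — paired with a suitable $\bar\phi$ — solving the static problem \eqref{static}, so $(\bar\phi,\bar u)\in\mathcal{N}$.

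\textbf{Third} — and this is the crux — I would transfer the plate stability onto the flow. The data $d:=u_t+Uu_x$ driving \eqref{flowplate}$_4$ carries only $L^2(\Omega\times(0,T))$ regularity from finite energy, below the $H^{1/3}$ threshold that renders the hyperbolic Neumann map tame \cite{tataru}; crucially, the defect is confined, microlocally, to the characteristic/glancing sector. I would decompose $d$ into elliptic, hyperbolic, and characteristic pieces: on the first two, standard hyperbolic boundary regularity suffices, while on the characteristic sector one uses that $d$ itself satisfies the plate equation, so that $\Delta^2 u$, $k_0u_t$ and the time-space structure supply an anisotropic compensation exactly where the wave regularity is lost. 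Together with the dispersive (local-energy) decay that flushes the radiated flow out of $K_\rho$, this closes an estimate bounding $\|\phi(t)\|_{W_1(K_\rho)}^2+\|\phi_t(t)\|_{L^2(K_\rho)}^2$ by quantities governed by the plate velocity and the plate's distance to $\mathcal{N}$ (the stationary downwash $U\bar u_x$), both vanishing along $t_n$ by the previous step.

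Combining the last two steps, along any $t_n\to\infty$ the full state converges on each $K_\rho$ to an element of $\mathcal{N}$; since the displayed infimum is non-negative and $\mathcal{N}$ is closed (and generically finite), a standard contradiction argument — if the infimum stayed bounded below along some subsequence, extract a further subsequence converging into $\mathcal{N}$ by precompactness — yields that the infimum tends to $0$ as $t\to\infty$, which is the assertion. \textbf{The hard part} is the third step: the $L^2$-versus-$H^{1/3}$ gap in the Neumann lifting defeats any global or transform-based treatment of the wave, and the only route is to localize microlocally and verify that the plate dynamics supply the missing fractional derivative in precisely the bad sector. Secondary difficulties are the absence of unique continuation for the nonlocal von Karman force (handled via precompactness plus gradient structure rather than a pointwise energy argument) and running the memory reduction for finite-energy solutions on the unbounded flow domain.
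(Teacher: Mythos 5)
Your outline matches the paper's general architecture (memory reduction via the Kirchhoff/Poisson formula; plate precompactness from an attractor for the delay system; $u_t\to 0$ from the dissipation integral and a uniform bound on $\lb u_{tt},\eta\rb$; microlocal compensation in the characteristic sector; lift to the flow on $K_\rho$). However, there are three concrete gaps, the last two of which are significant.

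\textbf{(i) The compensation mechanism is misattributed.} You write that ``$d$ itself satisfies the plate equation, so that $\Delta^2 u$, $k_0u_t$ and the time-space structure supply an anisotropic compensation.'' That is not the mechanism. The key (and remarkably simple) observation in the paper is purely microlocal and does not invoke the plate PDE at all: in the characteristic sector $\sigma\sim|\boldsymbol\mu|$, the time dual variable is comparable to the tangential space dual variable, so microlocally $u_t\sim u_x$. Since $u\in H^2(\Omega)$ globally, this puts $u_t\in H^1$ (so $d=u_t+Uu_x$ is in $X^{1/2}_{-1/2}$, indeed in $X^{1}_{-1}$) precisely in the sector where the Neumann map loses regularity; in the elliptic and hyperbolic sectors no compensation is needed. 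Invoking $\Delta^2u$ or the damping term hints at a wrong mechanism and would derail the estimate.

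\textbf{(ii) The gradient-structure step is unavailable.} You propose to identify subsequential limits as elements of $\mathcal N$ ``with $u_t\to0$ and the gradient structure of the stationary functional $P$.'' But the paper explicitly warns (after Lemma \ref{energybound}, and again after Lemma \ref{staticsols}) that the energy is not known to be a strict Lyapunov function when $U\neq 0$, and the coupled system is not known to be gradient. The identification in the paper must therefore be done by hand: pass to the limit in the weak forms along a subsequence (using the local Lipschitz property of $f_V$ and the plate convergences from Theorem \ref{convergenceprops}), together with the Kirchhoff formula to deduce $\hat\psi=0$ from $u_t\to0$. As written, your step borrows a structure the model does not have.

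\textbf{(iii) The weak-to-strong conversion for the flow is glossed over.} What the microlocal estimate of Theorem \ref{mapprops*} delivers, applied to the difference variable $\psi=\phi-\hat\phi$, is control of $\|\psi_t\|^2_{L^2}+\|\nabla\psi\|^2_{L^2}$ only in $L^2$ of time on translated intervals $[t_{n_k}-a,t_{n_k}+a]$ of fixed length (the constant $C_T$ precludes any global-in-time use). You still need pointwise-in-time convergence $E^{sub}_{fl}(t_{n_k})\to 0$. Saying ``together with dispersive local-energy decay\ldots this closes an estimate'' does not bridge this: the dispersive decay handles only $\phi^*$ (the part driven by initial data), not $\phi^{**}$. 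The paper closes this via a separate ``converging together'' argument (Section \ref{together}): the flow energy identity for $\psi$ shows $\sup_{|\tau|\le a}|E^{sub}_{fl}(t_{n_k}+\tau)-E^{sub}_{fl}(t_{n_k})|\to 0$ using the same trace bound, and then the $L^2$-in-time smallness over $[t_{n_k}-a,t_{n_k}+a]$ forces the pointwise value to tend to $0$. This step is essential and missing from your proposal.

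A minor slip: the reduced plate equation should read $u_{tt}+\Delta^2u+k_0u_t+f_V(u)=p_0-(\partial_t+U\partial_x)u-q(u^t)$, with the material derivative landing on $u$ itself plus an additive memory term, not on $q(u^t)$.
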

If we make the further assumption that  $\mathcal N$ is an isolated set (e.g., finite), we have the corollary:
\begin{corollary}\label{improve} 
Let the hypotheses of Theorem \ref{regresult} be in force. Assume that we have chosen $p_0, F_0$ so that $\mathcal N$ is an isolated set; then for any  solution $(\phi(t),\phi_t(t); u(t),u_t(t))$ to \eqref{flowplate} (with localized flow data, as above), there exists $(\hat u, \hat \phi) \in \mathcal N$ such that
\begin{align*}\lim_{t \to \infty} \left\{\|u(t)-\hat u\|^2_{H_0^2(\Omega)}+\|u_t(t)\|^2_{L^2(\Omega)}+\|\phi(t)-\hat \phi\|_{W_1( K_{\rho} )}^2+\|\phi_t(t)\|^2_{L^2( K_{\rho} )} \right\}=0, ~\forall \rho>0.\end{align*}
\end{corollary}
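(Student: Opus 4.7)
Corollary \ref{improve} is a topological refinement of Theorem \ref{regresult}: a continuous trajectory whose distance to an isolated set tends to zero must eventually lie in (and converge to) a single point of that set. The plan is to reduce to this classical principle after verifying that the seminorm appearing in Theorem \ref{regresult} distinguishes the elements of $\mathcal{N}$.

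The first step is to note that the plate-component projection $\pi:\mathcal{N}\to H_0^2(\Omega)$, $(\hat u,\hat\phi)\mapsto \hat u$, is essentially injective. Indeed, if two stationary solutions share the plate component $\hat u$, then $\psi:=\hat\phi_1-\hat\phi_2\in W_1(\mathbb R^3_+)$ solves the elliptic problem $\Delta\psi-U^2\partial_{x_1}^2\psi=0$ in $\mathbb R^3_+$ with $\partial_{x_3}\psi=0$ on the boundary; a partial Fourier transform in $(x_1,x_2)$ together with $|U|<1$ forces $\psi$ to be (at most) a constant, which is invisible to the $W_1$ seminorm used in Corollary \ref{improve}. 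Hence the plate part of the seminorm in Theorem \ref{regresult} already separates distinct equivalence classes of elements of $\mathcal{N}$, and the isolatedness hypothesis (phrased on $Y$) transfers to positive separation in that seminorm.

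Specializing first to the generic finite case $\mathcal{N}=\{y^1,\dots,y^n\}$ emphasized in the remark preceding the corollary, set $\delta:=\tfrac{1}{3}\min_{i\neq j}\|\hat u^i-\hat u^j\|_{H_0^2}>0$, so the balls $B_\delta(\hat u^j)$ are pairwise disjoint in $H_0^2(\Omega)$. Let $d(t)$ denote the quantity inside the infimum in Theorem \ref{regresult}; the theorem gives $d(t)\to 0$, so for some $T_0>0$ and all $t\ge T_0$ the plate trajectory $u(t)$ lies in $\bigcup_j B_\delta(\hat u^j)$. Continuity of $t\mapsto u(t)$ in $H_0^2(\Omega)$ (from Theorem \ref{nonlinearsolution}), together with the disjointness of the balls, forces $u(t)$ to remain in a single ball $B_\delta(\hat u^*)$ for all $t\ge T_0$. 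Letting $t\to\infty$, $d(t)\to 0$ then pins the full seminorm to the fixed element $(\hat u^*,\hat\phi^*)\in\mathcal{N}$, which is the assertion of the corollary. For the general isolated (possibly infinite) case, the same argument runs locally, with the neighborhood guaranteed by the isolation of the eventual limit point $(\hat u^*,\hat\phi^*)$ replacing $B_\delta(\hat u^*)$.

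\textbf{Where the difficulty lies.} All of the analytic heavy lifting is packaged into Theorem \ref{regresult}, and what remains here is a purely topological refinement: one must transfer isolation from the ambient space $Y$ into positive separation in the local seminorm of Theorem \ref{regresult}. The injectivity of $\pi$ (modulo the harmless kernel of constants in $\hat\phi$) handles this cleanly, and in the finite case explicitly mentioned in the remark before the corollary, the step is automatic.
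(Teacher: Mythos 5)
Your argument is correct, and in fact the paper gives no explicit proof of Corollary~\ref{improve} --- it is stated as an immediate consequence of Theorem~\ref{regresult}, and the standard ``convergence to an isolated set plus continuity of the trajectory implies convergence to a single element'' reasoning is exactly what is implicitly intended. Your proof supplies that folklore argument, together with one genuinely useful supporting observation: the plate projection $\pi:\mathcal{N}\to H_0^2(\Omega)$ is injective, since for fixed $\hat u$ the flow part is determined up to additive constants (which are invisible in $W_1$) by the homogeneous elliptic Neumann problem, so isolation of $\mathcal{N}$ translates into positive $H_0^2$-separation of the plate components, which is precisely the quantity on which the continuity argument runs. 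Two small points that would tighten the write-up: (i) for the infinite isolated case you implicitly need existence of a subsequential limit point, which comes from Theorem~\ref{convergenceprops} (equivalently, precompactness provided by the attractor in Theorem~\ref{th:main2}), and the isolation radius $\delta$ should then be taken as the one attached to that particular limit point $\hat u^*$; (ii) the final step --- that forcing $u(t)$ into a single $\delta$-ball pins the \emph{full} seminorm in Theorem~\ref{regresult} onto $(\hat u^*,\hat\phi^*)$ --- deserves one sentence noting that once $u(t)\in B_\delta(\hat u^*)$, the infimum over $\mathcal{N}$ cannot be (approximately) attained at any other stationary pair because its $H_0^2$ plate contribution alone would exceed $\delta^2$. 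With those remarks your proposal is complete and matches the intended (unwritten) argument in the paper.
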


\subsection{Central Analytical Challenges}
We now technically outline the mathematical challenges presented by this model and its stability analysis. Even from the point of view of well-posedness, matters have been open until quite recently. Early mathematical work utilized Galerkin procedures, as well as  the inclusion of regularizations (thermoelasticity \cite{ryz,ryz2} or  rotational inertia in beam filaments \cite{b-c,b-c-1,springer}). The central issue in both well-posedness and stability centers about the loss of solution regularity for the wave equation when taken with (only) $L^2(\Omega)$ boundary data---the hyperbolic Neumann map does not satisfy the (uniform) Lopatinski condition in dimensions higher than one \cite{sakamoto,miyatake}. Specifically,  without regularizing the plate equation in some way, the Neumann wave data for finite energy solutions is globally measured in $$\partial_z \phi |_{\mathbb R^2} = [u_t+Uu_x]_{\text{ext}} \in C([0,T];L^2(\Omega)).$$ It is established by now \cite{tataru,miyatake,miyatake2,sakamoto,redbook} that such regularity does not recover finite energy wave solutions, so $\phi \not\in C([0,T];Y_{fl})$ (see Remark \ref{tatarucomment}) if we simply take generic data of this regularity. Hence, earlier results modified the model to boost $u_t \in L^2(\Omega) \mapsto u_t \in H^1(\Omega)$, and then use classical hyperbolic theory \cite{sakamoto,miyatake} to treat the flow and plate in a decoupled manner. Indeed,  the Neumann data need only be $H^{1/2}(\mathbb R^2)$ for a viable {\em Neumann mapping}. More recent well-posedness results \cite{webster,supersonic,Abhishek} do not attempt to decouple the system, but treat the flow-plate system holistically and exploit   dissipativity (in some sense), as well as estimate via microlocal analysis  some unbounded trace terms  appearing in the construction of the nonlinear semigroup \cite{supersonic,book}. 

For stability, the issue is yet more complicated and quite interesting. With a lack of dissipation for the full system, and the lack of compactness of the resolvent, one cannot rely on global approaches from dynamical systems (for instance, trying to conclude strong stability  based on a gradient system structure or clear asymptotic compactness).  In fact, the overall system has a Lyapunov function, {\em but it is not clear that it is strict} unless $U =0$.  As a consequence,  we cannot say the full system is {\em gradient} \cite{springer}.    As will be shown below, the existence of a compact global attractor for the plate  (Theorem \ref{th:main2}) and the finiteness of the dissipation integral (Corollary \ref{dissint}) allow one to show that all trajectories have the property that the {\it plate} component stabilizes to a stationary point (Theorem \ref{convergenceprops}); this of course requires $k_0>0$. There is no way to circumvent the fact that the  convergence of the plate must be ``lifted'' to the flow for stabilization of the whole system; thus there is no way to avoid the analysis of the aforementioned Neumann mapping. The critical issue can be seen from the classical point-wise plate-to-flow formula for the flow velocity given by \eqref{thisonenow}. It is direct to lift the strong convergence of the plate to the flow {\em in a weak sense}---pushing derivatives on test functions. On the other hand, for the main result, we must improve convergence from {\it weak to strong} for the flow. In general, this is a challenging proposition, as we must to push 2D stability to 3D dynamics from the boundary {\em with no 3D damping effects} and no help from dynamical systems tools.

As we have mentioned, previous works address stabilization in modified circumstances:  \cite{springer,Abhishek} address stabilization (with the same conclusion here) when rotational inertia and square root-type damping are considered $\alpha,k_1>0$. The earlier works \cite{ryz,ryz2} address stablization in the presence of thermoelastic effects for the plate. The works \cite{eliminating1} looks at the model with {\em smooth initial data} and obtains the stabilization result, whereas \cite{eliminating2} consider a different semilinearity (Berger's plate) and critically exploits its specific mathematical structure to obtain the stabilization result  but with a {\it large weak damping  coefficient}. 
In view of the above, our main contribution  is  that  {\em we obtain the main result for for finite energy solutions to the original, nonlocal, nonlinear model with no restrictions on the size of the damping coefficients.} Not surprisingly, this requires a novel approach to the problem.

\subsection{Key Observations and Proof Strategy}

Treating the boundary data $[u_t+Uu_x]_{\text{ext}}$ as a globally $L^2(\mathbb R^2)$ presents an insurmountable problem in the lack of regularity of the N-to-D  mapping.  Indeed, the loss of at least $1/3 $ derivative is intrinsic \cite{tataru}, unless the model is spatially 1D \cite{taylor}.  However, such a crude treatment neglects the microlocal properties of the data $u$  itself as a plate solution. Indeed, if we include the structure of the data in our microlocal analysis for the regularity of the plate-to-flow mapping we note that $u$ has additional microlocal regularity. {\bf This is main purpose of Section \ref{micro}, on which the entire breakthrough in this paper is based}. Indeed, the loss of a fraction of a derivative for the Neumann wave equation occurs in the so-called characteristic sector.\footnote{where the time and space dual variables are comparable, see Section \ref{flowsec}} However, in the characteristic sector, the plate has the property that $u_t \sim u_x$ by the comparability of time and space tangential derivatives therein.  Thus, since $u \in H^2(\Omega)$ globally, in the characteristic sector, we have $u_t \in H^1(\Omega)$ (of course, microlocally). Thus: {\bf the critical loss of regularity in the hyperbolic Neumann mapping is (more than) compensated for by the gain in regularity associated to the microlocalization of the boundary data}. We express this through a new microlocal estimate in Section \ref{micro} for the Neumann map, in the context of microlocally anisotropic spaces \cite{tataru}. 

\begin{remark} This is a remarkable compensation. If the plate were replaced by membrane, so the system was wave-wave coupling, no microlocal compensation in the 3D wave would present itself from the 2D wave in the characteristic sector. This would render the main result impossible.\end{remark}

In all, we are able to push the strong convergence of the plate to a stationary point to a strong convergence for the flow, improving from weak convergence of the flow (known) to strong convergence of the flow (to date, open). After identifying said  limit point as a stationary solution, one concludes strong convergence of the  flow-plate system  to  equilibria---hence the so called elimination of  flutter.   This is done in {\bf Section \ref{micro} and Section \ref{weaktostrong}, constituting  the main contribution of this work}.

 To establish the ``weak to strong'' result, it is critical, after obtaining the properties of the Neumann map in these $L^2$ based spaces (Section \ref{convmicro}), that we be able to pass directly  to point-wise information in time. This requires some work, since we are working with several different types of temporal regularity coming from the various components and techniques utilized. To do this, we use interpolation. We have uniform-in-time control of the plate on intervals of finite size (Theorem \ref{convergenceprops}, item 4.); this critically exploits the existence and compactness of an attractor for the plate dynamics \cite{delay}. 
 Finally, to close out the argument, we prove a ``converging together'' lemma on time intervals of uniform finite size (Section \ref{together}). We subtract off the known stationary state (from the established weak convergence) on a subsequence for the {\em linear} flow dynamics, invoke our obtained microlocal estimate in anisotropic spaces, and then obtain convergence to zero on translated time intervals. This allows us to relate point-wise flow information using the energy identity (in a decoupled sense), as it is linear. Arbitrariness of the time interval size allows us to show that flow is converging to a constant on these intervals, and the weak-to-strong improvement is {\em finally} deduced. 

We provide an Appendix where two other scenarios are briefly discussed (potential flow on a  half-space or bounded domain, $U=0$). We demonstrate in several ways how the main result here is {\em sharp}, from the point of view of the need for nonlinearity, the need for damping, and the non-expectation of uniform stabilization.

\section{Technical and Supporting Results}\label{techsec} 
We catalog several critical results that will be used in the proof of the main result. {\em Everything in this section has previously been used in the context of stability for this system, though we choose to include some proofs and  sketches here for the sake of self-containedness and easy referral for other arguments appearing herein.}
\subsection{Facts Concerning Plate Nonlinearity $f_V$}
As the von Karman nonlinearity is the principal engineering nonlinearity in the context of aeroelasticity \cite{bolotin, dowellnon, B, dowell1}, we require some facts to proceed. Indeed, $f_V$ has posed substantial challenges of regularity theory (and hence, uniqueness of solutions) until quite recently \cite{FHLT,springer}.  We present the key mathematical properties of the Airy stress function and $f_V$ that will be used in our work below.
Let us formally define the Airy stress function $v=v(u,w)$ in terms of \eqref{bracket} as the solution to 
$$\begin{cases} \Delta^2v=-[u,w] & \text{in}~~\Omega \\ u=w=\partial_{\nu}u=\partial_{\nu}w=0 & \text{on}~~\Gamma\end{cases}$$
Then, the so called {\em sharp regularity of Airy's stress function} is {\em critical} to all the results stated below.
 \begin{lemma}\label{l:airy-1}
 The function $v(u,w)$ as defined above satisfies:
 \begin{enumerate}
 \item
 $||v(u,w)||_{W^{2,\infty}(\Omega) } \leq C ||u||_2||w||_2 $ for all $u,w \in H^2(\Omega)$.
 \item
 The map ~$(u,w) \mapsto v(u,w) $ is locally Lipschitz from
 $H_0^2(\Omega) \times H_0^2(\Omega) \rightarrow W^{2,\infty}(\Omega)$.
 \end{enumerate}
 \end{lemma}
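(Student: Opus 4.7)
The plan is to exploit the compensated-compactness structure of the von Karman bracket $[u,w]$, placing it in the Hardy space $\mathcal H^1(\mathbb R^2)$, and then transferring this extra regularity through biharmonic elliptic theory to obtain the $W^{2,\infty}$ bound on the Airy stress function $v$. Part (2) will then follow immediately from the bilinearity of $v$ in $(u,w)$.

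First I would decompose the bracket into Jacobians of $H^1$ functions. A direct computation gives the identity
\begin{equation*}
[u,w] = J(u_{x_1},w_{x_2}) - J(u_{x_2},w_{x_1}), \qquad J(f,g) := f_{x_1} g_{x_2} - f_{x_2} g_{x_1},
\end{equation*}
so that each summand is the Jacobian of two $H^1(\Omega)$ functions. Extending $u,w$ to $\mathbb R^2$ (by trivial extension when $u,w\in H_0^2(\Omega)$, and via a bounded $H^2$-extension operator in general), the Coifman--Lions--Meyer--Semmes theorem gives
\begin{equation*}
\|[u,w]\|_{\mathcal H^1(\mathbb R^2)} \le C \|u\|_{H^2}\|w\|_{H^2}.
\end{equation*}

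Second, I would invoke elliptic regularity for the clamped biharmonic problem $\Delta^2 v = -[u,w]$, $v = \partial_\nu v = 0$ on $\Gamma$. With right-hand side in $\mathcal H^1$ and smooth boundary $\Gamma$, the appropriate biharmonic estimate --- ultimately relying on the $\mathcal H^1$--$\mathrm{BMO}$ duality --- delivers $D^2 v \in L^\infty(\Omega)$ with the bilinear bound
\begin{equation*}
\|v\|_{W^{2,\infty}(\Omega)} \le C \|[u,w]\|_{\mathcal H^1} \le C \|u\|_2\|w\|_2,
\end{equation*}
completing part (1). For part (2) one writes $v(u_1,w_1) - v(u_2,w_2) = v(u_1-u_2,w_1) + v(u_2,w_1-w_2)$ and applies (1) to each summand, yielding the local Lipschitz constant $C(\|u_i\|_2,\|w_i\|_2)$.

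The main obstacle I anticipate is the Hardy-to-$L^\infty$ transfer step itself. The naive divergence-form approach --- using the identity
\begin{equation*}
[u,w] = \partial_{x_1}\bigl(u_{x_1}w_{x_2x_2} - u_{x_2}w_{x_1x_2}\bigr) + \partial_{x_2}\bigl(u_{x_2}w_{x_1x_1} - u_{x_1}w_{x_1x_2}\bigr)
\end{equation*}
followed by $L^p$-biharmonic regularity --- falls just short of $W^{2,\infty}$: the divergence field involves products $\nabla u \cdot \nabla^2 w$, which by H\"older lie in $L^p$ only for $p<2$ in two dimensions (using $H^1\hookrightarrow L^q$ for every $q<\infty$ and $\nabla^2 w \in L^2$), whereas the Sobolev embedding $W^{3,p}\hookrightarrow W^{2,\infty}$ on $\Omega\subset\mathbb R^2$ demands $p>2$. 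The Hardy-space refinement precisely fills this logarithmic gap, at the cost of carefully justifying the extension to $\mathbb R^2$ and transplanting the global CLMS estimate back to the local clamped boundary-value problem on $\Omega$.
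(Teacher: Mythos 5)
The paper itself does not prove this lemma; it is quoted verbatim from the von Karman literature and is attributed to \cite{FHLT,FHLT1} and to \cite[Section 1.4, pp. 38--45]{springer}, where the result is known as the ``sharp regularity of the Airy stress function.'' Your proposal reconstructs precisely the argument used in those sources: the identity
\[
[u,w]=J(u_{x_1},w_{x_2})-J(u_{x_2},w_{x_1}),\qquad J(f,g)=f_{x_1}g_{x_2}-f_{x_2}g_{x_1},
\]
is correct (each summand is a Jacobian of $H^1$ functions), the Coifman--Lions--Meyer--Semmes theorem is exactly what places $[u,w]$ in $\mathcal H^1(\mathbb R^2)$ after a bounded $H^2$-extension, and the passage from $\mathcal H^1$ right-hand side to $D^2v\in L^\infty$ is the $\mathcal H^1$--$\mathrm{BMO}$ duality applied to the biharmonic layer potentials (in $\mathbb R^2$ the second derivatives of the fundamental solution $\tfrac{1}{8\pi}|x|^2\log|x|$ are logarithmic, hence locally $\mathrm{BMO}$, and translation invariance of the $\mathrm{BMO}$ norm makes $\sup_x|\langle D^2E(x-\cdot),f\rangle|\le C\|f\|_{\mathcal H^1}$ a legitimate estimate). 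Part~(2) by bilinearity, $v(u_1,w_1)-v(u_2,w_2)=v(u_1-u_2,w_1)+v(u_2,w_1-w_2)$, is correct. Your diagnosis of the $L^p$ shortfall ($p<2$ by H\"older, but $p>2$ needed for $W^{3,p}\hookrightarrow W^{2,\infty}$) is also the standard motivation for moving to Hardy spaces.

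The one genuine gap --- which you flag but do not close --- is the ``transplantation'' from the free-space Newtonian potential to the clamped problem on $\Omega$. Writing $v=v_0+v_1$ with $v_0=E*[u_{\rm ext},w_{\rm ext}]$ handles the interior, but $v_1$ then solves $\Delta^2 v_1=0$ in $\Omega$ with inhomogeneous Dirichlet data $v_1=-v_0$, $\partial_\nu v_1=-\partial_\nu v_0$ on $\Gamma$; one must supply a boundary elliptic estimate (or Green's function/layer-potential argument on $\Omega$) showing that $v_0\in W^{2,\infty}$ traces yield $v_1\in W^{2,\infty}(\Omega)$. This is where the cited references invest their technical effort, and it is not automatic from interior CLMS theory alone. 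Absent that step your proposal is an accurate sketch of the known proof rather than a self-contained one, but the architecture and all the key mechanisms are correct and coincide with those in \cite[Section 1.4]{springer} and \cite{FHLT}.
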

 \noindent This  implies $f_V(u)= - [u, v(u) + F_0 ]$ is locally Lipschitz~ $H_0^2(\Omega) \to L^2(\Omega)$ \cite[Section 1.4, pp.38--45]{springer}. 
 \begin{theorem}\label{nonest}
Let $u^i \in {B}_R(H^2_0(\Omega))$, $i=1,2$, and $z=u^1-u^2$.
 Then 
\begin{equation}\label{f-est-lip}
||f_V(u^1)-f_V(u^2)||_{-\delta}  \le C_{\delta}\big(1+||u^1||_2^2+||u^2||_2^2\big)||z||_{2-\delta} \le C(\delta,R)||z||_{2-\delta}~~~ for~ all ~~\delta \in [0,1].
\end{equation}\end{theorem}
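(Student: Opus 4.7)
The plan is to exploit the bilinearity of the Airy solver to rewrite the nonlinearity difference as a sum of bracket terms linear in $z = u^1 - u^2$, estimate each in $H^{-\delta}(\Omega)$ by duality against test functions in $H_0^\delta(\Omega)$, and interpolate between the endpoints $\delta = 0$ and $\delta = 1$. The bilinear identity $v(u^1) - v(u^2) = v(z, u^1 + u^2)$ gives
\[
f_V(u^1) - f_V(u^2) = -[z,\, v(u^1) + F_0] - [u^2,\, v(z,\, u^1 + u^2)],
\]
with both summands linear in $z$. It then suffices to bound the two associated linear maps $z \mapsto [z, v(u^1) + F_0]$ and $z \mapsto [u^2, v(z, u^1 + u^2)]$ from $H^{2-\delta}(\Omega)$ into $H^{-\delta}(\Omega)$, with operator norms controlled by $C(1 + ||u^1||_2^2 + ||u^2||_2^2)$.

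The first summand is the easy one. By Lemma~\ref{l:airy-1} the ``frozen'' coefficient $V := v(u^1) + F_0$ lies in $W^{2,\infty}(\Omega)$ with $||V||_{W^{2,\infty}} \le C(1 + ||u^1||_2^2)$. At $\delta = 0$ the multiplicative bound is trivial: $||[z, V]||_0 \le C ||V||_{W^{2,\infty}} ||z||_2$. At $\delta = 1$ I exploit the divergence form
\[
[z, V] = \partial_1(V_{22} z_1 - V_{12} z_2) + \partial_2(V_{11} z_2 - V_{12} z_1)
\]
and test against $\varphi \in H_0^1(\Omega)$: since $z \in H_0^2(\Omega)$ forces $\nabla z|_\Gamma = 0$ and $\varphi|_\Gamma = 0$, all boundary contributions in the integration by parts vanish, leaving $|\langle [z, V], \varphi\rangle| \le C ||V||_{W^{2,\infty}} ||z||_1 ||\varphi||_1$. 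Interpolating the linear (in $z$) map between these two endpoints, e.g.\ by Riesz--Thorin on the Sobolev scale, yields $||[z, V]||_{-\delta} \le C(1 + ||u^1||_2^2) ||z||_{2-\delta}$ for the full range $\delta \in [0,1]$.

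The second summand is the crux, since $z$ now sits inside the Airy solver and a naive $W^{2,\infty}$-bound on $v(z, u^1 + u^2)$ costs a full $||z||_2$ independent of $\delta$. To recover the sharp exponent $||z||_{2-\delta}$ I combine bracket symmetry $\langle [a,b], c\rangle = \langle a, [b, c]\rangle$ (valid for $a \in H_0^2$ by divergence-form integration by parts) with the Airy PDE $\Delta^2 v(a, b) = -[a, b]$ and the self-adjointness of $(\Delta^2)^{-1}$ under clamped boundary conditions. Since $v(z, u^1+u^2) \in H_0^2(\Omega)$, these ingredients combine into the key identity
\[
\langle [u^2,\, v(z,\, u^1 + u^2)],\, \varphi\rangle \;=\; \langle v(u^2,\, \varphi),\; [z,\, u^1 + u^2] \rangle,
\]
which moves the $z$-dependence out of the Airy solver. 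The auxiliary function $v(u^2, \varphi)$ solves $\Delta^2 v(u^2, \varphi) = -[u^2, \varphi]$ with clamped data; biharmonic elliptic regularity combined with 2D Sobolev embedding places it in the spaces one needs---notably $W^{2,\infty}$ when $\varphi \in H_0^2$ (Lemma~\ref{l:airy-1} again), with weaker-regularity analogues at lower $\delta$ handled by duality and interpolation in the test-function norm. A further divergence-form integration on $[z, u^1 + u^2]$ then shifts one derivative onto $v(u^2, \varphi)$, reducing the pairing to a H\"older estimate that yields the endpoint bounds $||[u^2, v(z, u^1 + u^2)]||_{-\delta} \le C(1 + ||u^i||_2^2) ||z||_{2-\delta}$ for $\delta \in \{0, 1\}$; interpolation closes the full range.

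The main obstacle is precisely this endpoint analysis of the second bracket: the Sobolev regularity of $v(u^2, \varphi)$ for $\varphi \in H_0^1$ is borderline in two dimensions, and the whole argument hinges on the Airy identity combined with meticulous cancellation of boundary traces at each integration by parts---both $\nabla u^2|_\Gamma = 0, \nabla z|_\Gamma = 0$ arising from $u^2, z \in H_0^2$, and the clamped traces of $v(z, u^1 + u^2)$ and $v(u^2, \varphi)$. Summing the two bracket bounds and using $||u^i||_2 \le R$ on $B_R(H_0^2(\Omega))$ produces the stated inequality with constant $C(\delta, R)$.
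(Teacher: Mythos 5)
The paper does not supply its own proof of Theorem~\ref{nonest}; it simply cites~\cite[Section 1.4, pp.38--45]{springer}. So there is no internal argument to match against---only the correctness of your proposal to assess.

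Your decomposition $f_V(u^1)-f_V(u^2)=-[z,\,v(u^1)+F_0]-[u^2,\,v(z,u^1+u^2)]$ is correct, and your treatment of the first summand (endpoints $\delta=0,1$ via $W^{2,\infty}$ regularity and the divergence form, followed by interpolation of the linear map $z\mapsto [z,V]$) is sound. For the second summand, your identity
\[
\big\langle [u^2,\,v(z,u^1+u^2)],\,\varphi\big\rangle = \big\langle v(u^2,\varphi),\,[z,\,u^1+u^2]\big\rangle
\]
is also correct (it uses $\langle[a,b],c\rangle=\langle[a,c],b\rangle$ for $a\in H_0^2$, the Airy PDE $\Delta^2 v(a,b)=-[a,b]$, and the self-adjointness $\langle \Delta^2 a,b\rangle=\langle \Delta a,\Delta b\rangle$ for $a,b\in H_0^2$). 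The gap is precisely where you flag it---and it is fatal to the argument as written. You try to close the second summand by interpolating between $\delta=0$ and $\delta=1$, but the $\delta=1$ endpoint requires $\nabla v(u^2,\varphi)\in L^\infty(\Omega)$ for $\varphi$ only in $H_0^1(\Omega)$. In 2D this fails by an $\varepsilon$: one gets $v(u^2,\varphi)\in H^{2-\varepsilon}$ for every $\varepsilon>0$, hence $\nabla v(u^2,\varphi)\in L^q$ for every $q<\infty$, but not $q=\infty$. You state this is ``the main obstacle'' and then assert it resolves to a H\"older estimate; it does not, at that endpoint.

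The repair is to push your identity one step further and interpolate to $\delta=2$, where the estimate is clean, rather than stopping at $\delta=1$. A second application of the pivot identity (now with $u^1+u^2\in H_0^2$) gives
\[
\big\langle [u^2,\,v(z,u^1+u^2)],\,\varphi\big\rangle = \big\langle [u^1+u^2,\,v(u^2,\varphi)],\,z\big\rangle.
\]
For $\varphi\in H_0^2(\Omega)$, Lemma~\ref{l:airy-1} gives $\|v(u^2,\varphi)\|_{W^{2,\infty}}\le C\|u^2\|_2\|\varphi\|_2$, so $[u^1+u^2,\,v(u^2,\varphi)]\in L^2(\Omega)$ with norm $\le C\|u^1+u^2\|_2\|u^2\|_2\|\varphi\|_2$, and pairing against $z\in L^2(\Omega)$ yields
\[
\big\|[u^2,\,v(z,u^1+u^2)]\big\|_{-2}\le C\|u^1+u^2\|_2\|u^2\|_2\,\|z\|_0.
\]
Interpolating the linear-in-$z$ map between $H^2_0\to L^2$ (your $\delta=0$ bound) and $L^2\to H^{-2}$ (this $\delta=2$ bound) yields $\|[u^2,\,v(z,u^1+u^2)]\|_{-\delta}\le C(1+\|u^1\|_2^2+\|u^2\|_2^2)\|z\|_{2-\delta}$ for all $\delta\in[0,2]$, which covers (with margin) the range $\delta\in[0,1]$ claimed in the theorem and entirely avoids the 2D borderline at $\delta=1$. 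In short: you set up the right machinery but chose the wrong interpolation endpoint; the $W^{2,\infty}$ regularity you already invoked for $\varphi\in H_0^2$ is exactly what should close the argument.
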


The second critical property of the von Karman nonlinearity, in the context of long time behavior, is its ``good''  potential energy (namely,  control of low frequencies) \cite{springer}.
\begin{proposition}\label{epsilonlemma}  For any  $\eta,\epsilon > 0 $ there exists $M_{\epsilon,\eta} $ such that
	\begin{equation}\label{l:epsilon}\|u\|^2_{2-\eta} \leq \epsilon \big[\|\Delta u\|_0^2  + ||\Delta v(u)||_0^2 \big] + M_{\eta,\epsilon},~~\forall~u \in H^2(\Omega) \cap H_0^1(\Omega).\end{equation}
 \end{proposition}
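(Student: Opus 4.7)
The plan is a contradiction--compactness argument that exploits the quadratic homogeneity of the Airy solver together with the sharp regularity from Lemma~\ref{l:airy-1}. If the conclusion fails, I would extract $\epsilon_0,\eta_0>0$ and a sequence $\{u_n\}\subset H^2(\Omega)\cap H_0^1(\Omega)$ satisfying
\[
\|u_n\|_{2-\eta_0}^2 \;>\; \epsilon_0\bigl[\|\Delta u_n\|_0^2+\|\Delta v(u_n)\|_0^2\bigr]+n.
\]
Elliptic regularity for the Dirichlet Laplacian gives $\|u_n\|_{2-\eta_0}\le C_0\|\Delta u_n\|_0$, so the above forces $\|\Delta u_n\|_0\to\infty$.

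Next, I would renormalize by setting $w_n=u_n/\|\Delta u_n\|_0$, so that $\|\Delta w_n\|_0=1$. The key structural input is the quadratic homogeneity $v(cu)=c^{2}v(u)$ built into the Airy solver, which yields the clean scaling $\|\Delta v(u_n)\|_0=\|\Delta u_n\|_0^{2}\|\Delta v(w_n)\|_0$. Dividing the violated inequality by $\|\Delta u_n\|_0^{2}$ then produces simultaneously $\|w_n\|_{2-\eta_0}^2\ge\epsilon_0$ (bounded away from $0$) and $\|\Delta v(w_n)\|_0\le C/\|\Delta u_n\|_0\to 0$.

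Since $\{w_n\}$ is bounded in $H^2\cap H_0^1$, Rellich delivers a subsequence with $w_n\rightharpoonup w$ weakly in $H^2$ and $w_n\to w$ strongly in $H^{2-\eta_0}$; in particular $\|w\|_{2-\eta_0}^2\ge\epsilon_0$, so $w\not\equiv 0$. To pass to the limit in the Airy piece, I would use the symmetry of the bracket via $\langle[u,u],\phi\rangle=\langle[u,\phi],u\rangle$ for $\phi\in C_c^\infty(\Omega)$; strong $L^2$ convergence of $w_n$ then compensates the merely weak $L^2$ convergence of its second derivatives to give $[w_n,w_n]\to[w,w]$ in $\mathcal D'(\Omega)$. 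Combined with the uniform $H^2$ bound $\|v(w_n)\|_2\le C\|w_n\|_2^{2}$ from Lemma~\ref{l:airy-1}, this shows $v(w_n)\rightharpoonup v(w)$ weakly in $H^2$. Weak lower semicontinuity of the norm then forces $\|\Delta v(w)\|_0\le\liminf\|\Delta v(w_n)\|_0=0$, and the Airy boundary data $v(w)=\partial_\nu v(w)=0$ on $\Gamma$ together with $\Delta v(w)=0$ imply $v(w)\equiv 0$ by Dirichlet uniqueness for the Laplacian. Hence $[w,w]=-\Delta^2 v(w)=0$ in $\Omega$.

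The main obstacle is the final step: deducing $w\equiv 0$ from the degenerate Monge--Amp\`ere condition $[w,w]=2\det D^2w=0$ for $w\in H^2(\Omega)\cap H_0^1(\Omega)$. This is a unique-continuation-type rigidity for the von Karman bracket under clamped (or Dirichlet) traces, and it is the input that produces the desired contradiction with $\|w\|_{2-\eta_0}^2\ge\epsilon_0>0$. In the framework of this manuscript it can be invoked directly from the von Karman monograph \cite{springer}, completing the proof.
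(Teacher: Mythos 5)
Your compactness--contradiction argument matches the approach the paper sketches: the renormalization exploiting the quadratic homogeneity $v(cu)=c^2v(u)$ is the ``superlinearity of $f_V$'', the Rellich extraction is the compactness step, and the Monge--Amp\`ere rigidity ($[w,w]=0$, $w\in H^2\cap H_0^1\Rightarrow w\equiv 0$) you defer to \cite{springer} is exactly the ``maximum principle associated to the Monge--Amp\`ere equations'' the paper invokes. The proof is correct and essentially coincides with the paper's (one-line) sketch filled out in full.
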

\noindent The Lemma \eqref{l:epsilon} is obtained through a compactness uniqueness argument that exploits superlinearity of $f_V$ and a maximum principle associated to the Monge-Ampere equations. \begin{remark} The inequality in Proposition \ref{epsilonlemma} is valid for plates with clamped or hinged boundary conditions. However, {\it free} boundary conditions are not covered by this argument. \end{remark}

\subsection{Control of Energies}
In this section we remark that, in the norm of $Y$, solutions are global-in-time bounded. 
\begin{proposition}\label{littleest} For all $\phi \in W_1(\mathbb R_+^3)$:
	\begin{equation}\label{tracebound}||r_{\Omega}\gamma_0[\phi]||_{L^2(\Omega)} \le C_{\Omega}||\nabla \phi||_{L^2(\realsthree_+)}.\end{equation}
The interactive energy $E_{int}$, is then controlled in the following way:
	\begin{equation}\label{intbound} \big|E_{int}(u(t),\phi(t))\big| \le \delta \|\nabla \phi(t)\|_{\realsthree_+}^2+C(U,\delta)\|u_{x}(t)\|_{\Omega}^2, ~~\delta>0. \end{equation}
	
\end{proposition}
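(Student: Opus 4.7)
The plan is to establish the trace inequality \eqref{tracebound} first, from which \eqref{intbound} follows at once by Cauchy--Schwarz and Young's inequality.

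For \eqref{tracebound}, the delicate point is that $W_1(\mathbb R^3_+)$ is endowed with only the gradient (semi)norm, so the classical trace $H^1 \to H^{1/2}$ does not directly apply---it would carry an $L^2(\mathbb R^3_+)$ piece on the right-hand side. My approach is to chain three standard ingredients in the homogeneous Sobolev scale. First, invoke the homogeneous trace theorem $\gamma_0: \dot H^1(\mathbb R^3_+) \to \dot H^{1/2}(\mathbb R^2)$, giving $||\gamma_0 \phi||_{\dot H^{1/2}(\mathbb R^2)} \le C\, ||\nabla \phi||_{L^2(\mathbb R^3_+)}$. Second, use the critical Sobolev embedding $\dot H^{1/2}(\mathbb R^2) \hookrightarrow L^4(\mathbb R^2)$ (Gagliardo--Nirenberg with $\tfrac14 = \tfrac12 - \tfrac{1/2}{2}$) to obtain $||\gamma_0\phi||_{L^4(\mathbb R^2)} \le C\, ||\gamma_0\phi||_{\dot H^{1/2}(\mathbb R^2)}$. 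Third, apply H\"older on the bounded set $\Omega$ to pass from $L^4$ to $L^2$: $||r_\Omega \gamma_0\phi||_{L^2(\Omega)} \le |\Omega|^{1/4}\, ||\gamma_0\phi||_{L^4(\Omega)}$. Chaining the three produces \eqref{tracebound} with $C_\Omega$ depending only on $|\Omega|$ and universal Sobolev constants.

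For \eqref{intbound}, I would expand the interactive energy and apply Cauchy--Schwarz directly:
$$|E_{int}(u,\phi)| = 2|U|\,|\langle \phi, u_x\rangle_\Omega| \le 2|U|\, ||r_\Omega \gamma_0[\phi]||_{L^2(\Omega)}\, ||u_x||_{L^2(\Omega)}.$$
Substituting \eqref{tracebound} gives $|E_{int}| \le 2|U|C_\Omega\, ||\nabla \phi||_{L^2(\mathbb R^3_+)}\, ||u_x||_{L^2(\Omega)}$. Young's inequality $ab \le \tilde\delta a^2 + (4\tilde\delta)^{-1} b^2$, with $\tilde\delta$ chosen so that $\tilde\delta(2|U|C_\Omega)^2 = \delta$, then yields the claimed bound with $C(U,\delta) = (4\tilde\delta)^{-1}$.

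The only genuine (and rather mild) obstacle is the first step---justifying the ``gradient-only'' trace bound \eqref{tracebound} cleanly, given that $W_1$ as defined carries only a seminorm with constants in its kernel. The homogeneous Sobolev framework above is the standard way to sidestep this ambiguity, effectively working modulo constants on $\mathbb R^3_+$ consistently with the Sobolev-embedded decay at infinity implicit in the dynamical analysis. Once \eqref{tracebound} is in hand, the bound on $E_{int}$ is a routine algebraic consequence.
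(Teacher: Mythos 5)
Your proof is correct, but it routes the trace estimate \eqref{tracebound} through a different set of tools than the paper invokes. The paper appeals directly to the Hardy inequality (in the form $\|\phi\|^2_{L^2(K_\rho)}\le C_\rho\|\nabla\phi\|^2_{L^2(\realsthree_+)}$, see the reference to \cite[p.301]{springer}), which controls the $L^2$-norm of $\phi$ on a bounded ball by the global gradient norm; one then reaches the boundary via a cutoff/fundamental-theorem-of-calculus argument in the $x_3$-direction, using that $\Omega\times(0,1)$ sits inside some $K_\rho$. Your route instead chains the homogeneous trace theorem $\dot H^1(\mathbb R^3_+)\to\dot H^{1/2}(\mathbb R^2)$, the critical embedding $\dot H^{1/2}(\mathbb R^2)\hookrightarrow L^4(\mathbb R^2)$, and H\"older on the bounded set $\Omega$. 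Both methods are valid and both correctly sidestep the absence of $\|\phi\|_{L^2(\realsthree_+)}$ on the right-hand side; the Hardy-inequality route is arguably more elementary and self-contained (one FTC integration plus Cauchy--Schwarz), whereas your route makes transparent the dimensional/scaling reason the estimate holds and requires no cutoff. You are also right to flag the subtlety about constants in $W_1$: the estimate requires interpreting $W_1$ as the completion (equivalently, exploiting the $\dot H^1(\realsthree_+)\hookrightarrow L^6$ realization), which your framework handles cleanly. The derivation of \eqref{intbound} from \eqref{tracebound} via Cauchy--Schwarz and Young is exactly the argument the paper intends.
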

\noindent Estimate \eqref{tracebound} follows from the Hardy inequality, and from \eqref{tracebound}, Young's gives \eqref{intbound} \cite{springer,webster}.

Now, let us define the positive part of the energy $\mathcal E$ as:
\begin{equation}\label{posenergy}
 \mathcal E_*(t)=\dfrac{1}{2}\big[||u_t||_0^2(\Omega)+ ||\Delta u ||_0^2+\frac{1}{2}||\Delta v(u)||_0^2+||\phi_t||_0^2+||\nabla \phi||_0^2\big]\end{equation}
 Then, via Proposition \ref{littleest}, we obtain control of the unsigned energy by the positive part:
 \begin{lemma}\label{energybound} For generalized solutions to \eqref{flowplate}, there exist positive constants $c,C,$ and $M_{p_0,F_0} $ that are positive, and do not depend on the trajectory, such that:
	\begin{equation} \label{estim}
	c \mathcal E_*(t)-M_{p_0,F_0} \le \cE(t) \le C \mathcal E_*(t)+M_{p_0,F_0},
	\end{equation} \end{lemma}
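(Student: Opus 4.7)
The approach is elementary but requires delicate bookkeeping of constants. The first step is to isolate the discrepancy
\begin{equation*}
\mathcal{E}(t) - \mathcal{E}_*(t) = -\frac{U^2}{2}\|\phi_x(t)\|^2 - \langle F_0, [u(t),u(t)]\rangle + \langle p_0, u(t)\rangle + E_{int}(u(t),\phi(t)),
\end{equation*}
and then to estimate each of the four excess terms in terms of $\mathcal{E}_*(t)$ plus constants depending only on $\|p_0\|$ and $\|F_0\|_{H^4}$. I would treat the two inequalities separately, since they have asymmetric difficulty.

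The upper bound $\mathcal{E}(t) \le C\mathcal{E}_*(t) + M_{p_0,F_0}$ is essentially free: the $-\frac{U^2}{2}\|\phi_x\|^2$ term is discarded; Proposition \ref{littleest} controls $|E_{int}|$ by $\delta\|\nabla\phi\|^2 + C(U,\delta)\|u_x\|^2$, which is a fixed multiple of $\mathcal{E}_*$ upon using $\|u_x\|^2 \le C_{\Omega}\|\Delta u\|^2$ (available since $u \in H^2_0(\Omega)$); the bracket $|\langle F_0,[u,u]\rangle|$ is bounded by $C\|F_0\|_{H^4}\|u\|_2^2$ via the pointwise bound $|[u,u]| \le 2(|u_{xx}u_{yy}|+|u_{xy}|^2)$ together with $H^4(\Omega) \hookrightarrow W^{2,\infty}(\Omega)$; and $|\langle p_0,u\rangle| \le \|p_0\|\|u\|$ is handled by Young's and Poincaré.

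The lower bound $c\mathcal{E}_*(t) \le \mathcal{E}(t) + M_{p_0,F_0}$ is the substantive direction. Rearranging, I would need to show that $\frac{U^2}{2}\|\phi_x\|^2 + |\langle F_0,[u,u]\rangle| + |\langle p_0,u\rangle| + |E_{int}|$ is at most $(1-c)\mathcal{E}_*(t) + M_{p_0,F_0}$. Two ingredients drive the argument. First, the subsonic assumption $|U|<1$ ensures the flow-side combination $\tfrac{1}{2}\|\nabla\phi\|^2 - \tfrac{U^2}{2}\|\phi_x\|^2 - \delta\|\nabla\phi\|^2 \ge \tfrac{1-U^2-2\delta}{2}\|\nabla\phi\|^2$ remains coercive for $\delta$ small (with $\delta$ the parameter from Proposition \ref{littleest}). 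Second, and crucially, Proposition \ref{epsilonlemma} allows any norm of $u$ below $H^2(\Omega)$ to be absorbed into $\epsilon(\|\Delta u\|^2+\|\Delta v(u)\|^2)+M_{\eta,\epsilon}$; choosing $\epsilon$ small handles $\|u_x\|^2$ (from $|E_{int}|$), $\|u\|^2$ (from $|\langle p_0,u\rangle|$), and the low-order factor produced by the symmetrized bracket below.

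The main obstacle is the cubic term $\langle F_0,[u,u]\rangle$: the direct bound $\|F_0\|_{L^\infty}\|u\|_2^2$ costs a full multiple of $\|\Delta u\|^2$, which cannot be absorbed into a strict fraction of $\mathcal{E}_*$. The needed trick is the integration-by-parts symmetry $\langle F_0,[u,u]\rangle = \langle u, [F_0,u]\rangle$, valid because $u \in H_0^2(\Omega)$ has clamped boundary conditions, which converts the estimate into $|\langle F_0,[u,u]\rangle| \le C\|F_0\|_{W^{2,\infty}}\|u\|_0\|u\|_2$. A Young's inequality with small parameter then throws the high-order factor onto $\|\Delta u\|^2$ at a controllable rate, and the remaining low-order factor $\|u\|_0^2$ falls squarely within the scope of Proposition \ref{epsilonlemma}. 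Collecting all bounds, choosing $\delta$ and $\epsilon$ sufficiently small depending on $U$, $\|F_0\|_{H^4}$, and $\|p_0\|$, one arrives at $\mathcal{E}_*(t) \le \mathcal{E}(t) + \tfrac{1}{2}\mathcal{E}_*(t) + M_{p_0,F_0}$, which is the claim with $c=1/2$.
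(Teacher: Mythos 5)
Your proof plan is correct, and it reconstructs what the paper takes for granted: the paper gives no proof of Lemma \ref{energybound}---it points the reader to the cited references---but it does supply exactly the two tools you deploy, namely Proposition \ref{littleest} (Hardy control of $E_{int}$) and Proposition \ref{epsilonlemma} (the ``good potential energy'' absorption), and the argument behind the references is precisely the one you lay out. The decomposition into $\mathcal{E}(t) - \mathcal{E}_*(t) = -\tfrac{U^2}{2}\|\phi_x\|^2 - \langle F_0,[u,u]\rangle + \langle p_0,u\rangle + E_{int}$ is right; you are also right that the upper bound is free (constants need not be small), that the lower bound is the real direction, and that subsonicity is what keeps $\tfrac12\|\nabla\phi\|^2 - \tfrac{U^2}{2}\|\phi_x\|^2$ coercive with enough slack to absorb $\delta\|\nabla\phi\|^2$ from the $E_{int}$ bound.

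You have also correctly identified the one nonobvious ingredient: the cubic term $\langle F_0,[u,u]\rangle$ cannot be absorbed from the crude bound $C\|u\|_2^2$, because that sits at the top level of $\mathcal{E}_*$ and Proposition \ref{epsilonlemma} only absorbs strictly-below-$H^2$ norms. The symmetrization $\langle F_0,[u,u]\rangle = \langle u,[F_0,u]\rangle$ is indeed valid for $u\in H_0^2(\Omega)$ (the boundary terms in the two integrations by parts vanish because $\nabla u|_{\Gamma}=0$ and $u|_{\Gamma}=0$, with $F_0$ needing no boundary conditions at all), and it lowers one factor of $\|u\|_2$ to $\|u\|_0$, which Young plus Proposition \ref{epsilonlemma} then absorb. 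One small refinement worth knowing: using $[u,F_0] = -\Delta^2 v(u,F_0)$ with $v(u,F_0)\in H_0^2(\Omega)$, and then $\|\Delta v(u,F_0)\|_0 \le C\|F_0\|_{W^{2,\infty}}\|u\|_0$ (obtained by testing the Airy equation against $v(u,F_0)$ and using the same symmetry once more), one gets $|\langle F_0,[u,u]\rangle| = |\langle \Delta u, \Delta v(u,F_0)\rangle| \le C\|F_0\|_{W^{2,\infty}}\|u\|_0\|\Delta u\|$ directly, which is the same bound you derived but cast in the language of Lemma \ref{l:airy-1}; either route is fine. Do note, as you implicitly do, that the $C(U,\delta)\|u_x\|^2$ term from Proposition \ref{littleest} should be handled in the lower bound via Proposition \ref{epsilonlemma} with $\eta=1$ rather than via $\|u_x\|^2\lesssim\|\Delta u\|^2$ (which is only safe in the upper-bound direction, where large constants are harmless).
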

\begin{remark} Even with these bounds, and the associated energy identity from Theorem \ref{nonlinearsolution}, it is not clear that $\cE_*$ provides a strict Lyapunov function for the dynamics (rendering them {\em gradient}). This is due to the specific structure of the coupling in the problem (involving material derivatives), and is one way to view one of the central challenges here with regard to stability. \end{remark}
With the energy identify in Theorem \ref{nonlinearsolution}, we may synthesize the above to obtain:
\begin{lemma}\label{globalbound}
Any generalized solution to \eqref{flowplate}, with $f(u)=f_V(u)$ satisfies the bound \begin{equation}
\sup_{t \ge 0} \left\{\|u_t\|_{\Omega}^2+\|\Delta u\|_{\Omega}^2+\|\phi_t\|_{\realsthree_+}^2+\|\nabla \phi\|_{\realsthree_+}^2 \right\}  \leq C\big(\|S_t(y_0)\|_Y\big)< + \infty.\end{equation}
\end{lemma}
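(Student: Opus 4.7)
The plan is to combine the energy identity from Theorem \ref{nonlinearsolution} with the two-sided equivalence of Lemma \ref{energybound}, exploiting $k_0\ge 0$ to conclude that $\mathcal E$ is nonincreasing in time and hence that $\mathcal E_*$---which directly dominates the four squared norms appearing in the supremum---is uniformly bounded on $[0,\infty)$.

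Concretely, first I would apply the energy identity \eqref{eident} with $s=0$, and simply drop the nonnegative dissipation term $k_0\int_0^t\|u_t(\tau)\|^2\,d\tau$, obtaining $\mathcal E(t)\le \mathcal E(0)$ for all $t\ge 0$. Next, I would invoke the upper bound of \eqref{estim} at time $t=0$ to get $\mathcal E(0)\le C\mathcal E_*(0)+M_{p_0,F_0}$, while the lower bound of \eqref{estim} evaluated at time $t$ gives $c\,\mathcal E_*(t)\le \mathcal E(t)+M_{p_0,F_0}$. Chaining these three inequalities yields
\begin{equation*}
c\,\mathcal E_*(t) \;\le\; \mathcal E(t)+M_{p_0,F_0} \;\le\; \mathcal E(0)+M_{p_0,F_0} \;\le\; C\,\mathcal E_*(0)+2M_{p_0,F_0},
\end{equation*}
uniformly in $t\ge 0$. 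The right-hand side depends only on the initial datum, since $\mathcal E_*(0)$---by its definition \eqref{posenergy} together with item 1 of Lemma \ref{l:airy-1} (which controls $\|\Delta v(u_0)\|_0$ by $\|u_0\|_2^2$)---is a continuous function of $\|y_0\|_Y$.

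Since $\mathcal E_*(t)$ is, up to a factor of $\tfrac12$, the sum of the four squared norms on the left of the claimed inequality plus the nonnegative term $\tfrac12\|\Delta v(u(t))\|_0^2$, taking the supremum over $t\ge 0$ yields the stated bound. I do not anticipate any serious obstacle: the only subtlety is that the total energy $\mathcal E$ contains the sign-indefinite interactive term $E_{int}$, and it is precisely for this reason that Lemma \ref{energybound} is needed---to pass between $\mathcal E$ (on which the dissipative identity lives) and the positive-definite quantity $\mathcal E_*$ (which controls the natural norms) in both directions.
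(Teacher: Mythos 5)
Your proposal is correct and is precisely the synthesis the paper has in mind: the text immediately preceding Lemma \ref{globalbound} says ``With the energy identity in Theorem \ref{nonlinearsolution}, we may synthesize the above to obtain,'' and then cites \cite{springer,webster} for details rather than writing out the chain $c\,\mathcal E_*(t)\le\mathcal E(t)+M\le\mathcal E(0)+M\le C\mathcal E_*(0)+2M$, which you supply. Dropping the nonnegative dissipation term and using both inequalities of \eqref{estim} (at times $t$ and $0$ respectively), together with the continuity of $\mathcal E_*(0)$ in $\|y_0\|_Y$ via Lemma \ref{l:airy-1}, is exactly the intended argument; no gap.
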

\noindent The above inequalities are proven and discussed in detail in \cite{springer, webster}.

Finally, as a corollary to the energy identity \eqref{eident} and Lemma \ref{energybound}, we have:

\begin{corollary}\label{dissint}
Let $ k_0> 0 $. Then for a generalized solution we have $$  \int_0^{\infty} \|u_t(t)\|_{0,\Omega}^2 dt \leq  K_{u,k_0} < \infty.$$
\end{corollary}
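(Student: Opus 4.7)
The plan is to combine the energy identity \eqref{eident} with the two-sided control of the total energy by the positive part, given by Lemma \ref{energybound}. Setting $s=0$ in \eqref{eident} yields
\begin{equation*}
k_0 \int_0^t \|u_t(\tau)\|_{0,\Omega}^2 \, d\tau = \mathcal{E}(0) - \mathcal{E}(t),
\end{equation*}
so it suffices to produce a uniform-in-$t$ upper bound for the right-hand side.

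The upper bound on $\mathcal{E}(0)$ comes directly from the upper estimate in \eqref{estim}: since the initial datum $y_0 \in Y$, the positive energy $\mathcal{E}_*(0)$ is finite, hence $\mathcal{E}(0) \leq C \mathcal{E}_*(0) + M_{p_0,F_0} < \infty$. For the bound $-\mathcal{E}(t) \leq$ (const), I would invoke the lower estimate in \eqref{estim}, namely $\mathcal{E}(t) \geq c\, \mathcal{E}_*(t) - M_{p_0,F_0}$. Since $\mathcal{E}_*(t) \geq 0$ by its definition \eqref{posenergy}, this gives $\mathcal{E}(t) \geq -M_{p_0,F_0}$ independently of $t$, and thus $-\mathcal{E}(t) \leq M_{p_0,F_0}$.

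Combining these,
\begin{equation*}
k_0 \int_0^t \|u_t(\tau)\|_{0,\Omega}^2 \, d\tau \;\leq\; \mathcal{E}(0) + M_{p_0,F_0} \;\leq\; C\,\mathcal{E}_*(0) + 2M_{p_0,F_0} \equiv K_{u,k_0}'(\|y_0\|_Y),
\end{equation*}
uniformly in $t \geq 0$. Dividing by $k_0 > 0$ and taking the limit $t \to \infty$ (the integrand being nonnegative, the monotone convergence theorem applies) yields the desired finite bound $K_{u,k_0}$. The proof is essentially immediate given the preceding results; the only subtlety worth flagging is that the interactive energy $E_{int}$ is sign-indefinite, so without Lemma \ref{energybound} one could not conclude a one-sided lower bound on $\mathcal{E}(t)$—this is precisely why the two-sided control by $\mathcal{E}_*$ (rather than the identity \eqref{eident} alone) is required. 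No additional obstacle is anticipated, and strong vs.\ generalized solution considerations do not enter since the energy identity is already stated for the weak/generalized class in Theorem \ref{nonlinearsolution}.
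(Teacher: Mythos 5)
Your proof is correct and matches the argument the paper has in mind: the result is labeled a corollary precisely because it follows immediately from the energy identity \eqref{eident} together with the two-sided control in Lemma \ref{energybound}, exactly as you have written. The only point worth emphasizing —which you do flag correctly— is that the sign-indefiniteness of $E_{int}$ means one cannot simply read off $\mathcal E(t)\ge 0$ from its definition, so the lower bound $\mathcal E(t)\ge -M_{p_0,F_0}$ genuinely requires Lemma \ref{energybound}.
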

\noindent The boundedness obtained from the energy identity---which does not hold for supersonic flows $|U|>1$---will be used critically to obtain convergence of trajectories to stationary points.

In order to describe the dynamics of the flow in the context of long-time behavior,  it is necessary to introduce local spaces $Y_{fl,\rho}$: $$\|(\phi_0,\phi_1)\|_{Y_{fl},\rho}\equiv \int_{ K_{\rho} } |\nabla \phi_0|^2  + |\phi_1|^2 d\xb,$$  
where we recall $K_{\rho} \equiv \{ \xb \in \mathbb R^3_{+}; |\xb |\leq \rho \} $. We denote by $Y_{\rho} \equiv Y_{fl,\rho} \times Y_{pl}$, and we will consider convergence (in time) in $Y_{\rho}$ for any $\rho>0$. 

Again, by virtue of the Hardy inequality \cite[p.301]{springer}
$$\|\phi_0\|^2_{L^2(K_{\rho})} \le C_{\rho}\|\nabla \phi_0\|^2_{L^2(\realsthree_+)}$$ and hence
$$\|(\phi_0,\phi_1)\|_{Y_{fl},\rho}^2 \le  \|(\phi_0,\phi_1)\|_{H^1(K_{\rho})\times L^2(K_{\rho})}^2\le C_{\rho} \|(\phi_0,\phi_1)\|_{Y_{fl}}^2.$$

\subsection{Neumann Formulae and Flow Decomposition}\label{flowsec}
	
	Let us consider the ``decoupled'' Neumann flow problem:
\begin{equation}\label{floweq*}
\begin{cases}
(\partial_t+U\partial_x)^2\phi=\Delta \phi & \text{ in }~\mathbb R_+^3\\
\partial_{\nu} \phi\Big|_{z=0} = h(\xb,t) & \text{ in }~\mathbb R^2\\
\phi(t_0)=\phi_0;~~\phi_t(t_0)=\phi_1
\end{cases}
\end{equation}
for which we have \cite{b-c-1,springer,miyatake}:
\begin{theorem}\label{flowpot}
Assume $U \in \mathbb R$, $U\neq \pm 1$; take $(\phi_0,\phi_1) \in H^1(\realsthree)\times L^2(\realsthree).$ If ~$h \in C\left([t_0,\infty);H^{1/2}(\mathbb R^2)\right)$ then \eqref{floweq*} is well-posed (in the weak sense) with 
$$\phi \in C\left([t_0,\infty);H^1(\realsthree_+)\right),~~\phi_t \in C\left( [t_0,\infty);L^2(\realsthree_+)\right).$$
\end{theorem}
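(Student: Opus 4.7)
The strategy is to reduce \eqref{floweq*} to the standard Neumann wave equation on $\realsthree_+$ by a Galilean change of variables aligned with the background flow, and then invoke the classical half-space Neumann theory of Miyatake/Sakamoto at the boundary regularity $H^{1/2}$---the sharp Lopatinski threshold for finite energy lifting.

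To implement the reduction, assume without loss of generality that $t_0 = 0$ (the equation is time-translation invariant), introduce new variables $\tilde t = t$, $\tilde x = x - Ut$, with $y, z$ unchanged, and set $\tilde\phi(\tilde x, y, z, \tilde t) \defeq \phi(\tilde x + U\tilde t, y, z, \tilde t)$. The chain rule gives $\partial_{\tilde t}\tilde\phi = (\partial_t + U\partial_x)\phi$ and $\Delta_{\tilde x, y, z}\tilde\phi = \Delta_{x,y,z}\phi$, so the equation becomes the standard wave equation $\partial_{\tilde t}^2\tilde\phi = \Delta\tilde\phi$ on $\realsthree_+$. The half-space is preserved, the Neumann data become $\tilde h(\tilde x, y, \tilde t) \defeq h(\tilde x + U\tilde t, y, \tilde t)$, which lies in $C([0,\infty); H^{1/2}(\realstwo))$ since tangential translation is an isometry of $H^{1/2}(\realstwo)$, and the initial data transform as $(\tilde\phi_0, \tilde\phi_1) = (\phi_0, \phi_1 + U\partial_x\phi_0) \in H^1(\realsthree_+) \times L^2(\realsthree_+)$.

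With the reduction in hand, the Miyatake/Sakamoto theorem for the half-space Neumann wave problem \cite{sakamoto,miyatake} (see also \cite{b-c-1,springer}) furnishes a unique weak solution with $\tilde\phi \in C([0,\infty); H^1(\realsthree_+))$, $\partial_{\tilde t}\tilde\phi \in C([0,\infty); L^2(\realsthree_+))$, satisfying
\[
\sup_{\tilde t \in [0,T]} \Bigl(\|\tilde\phi(\tilde t)\|_{H^1(\realsthree_+)}^2 + \|\partial_{\tilde t}\tilde\phi(\tilde t)\|_{L^2(\realsthree_+)}^2\Bigr) \leq C_T\Bigl(\|\tilde\phi_0\|_{H^1}^2 + \|\tilde\phi_1\|_{L^2}^2 + \sup_{[0,T]}\|\tilde h\|_{H^{1/2}(\realstwo)}^2\Bigr).
\]
This estimate is obtained by tangential Fourier transform in $(\tilde x, y, \tilde t)$, reducing to an ODE in $z$ whose decaying/outgoing root is selected at $z = \infty$, with the non-degeneracy of the Lopatinski determinant for the Neumann boundary operator providing the $H^{1/2}$-to-$H^1$ lift; existence for general $H^{1/2}$ data follows by density and passage to the limit, while uniqueness and continuous dependence are consequences of linearity and the estimate itself. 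Setting $\phi(x, y, z, t) \defeq \tilde\phi(x-Ut, y, z, t)$ transfers back to a weak solution of \eqref{floweq*} with the claimed regularity, as the inverse Galilean transformation preserves all of the relevant norms and continuity in time.

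The main obstacle---and the reason the remainder of the paper is so delicate---is the borderline nature of the Miyatake/Sakamoto estimate at exactly $H^{1/2}$ boundary data: this is the finite-energy threshold afforded by the Lopatinski condition, and any coarser $L^2$ treatment of the boundary data necessarily incurs a loss of regularity (by up to $1/3$ derivative, cf.~\cite{tataru}). Here the hypothesis $h \in C([t_0,\infty); H^{1/2}(\realstwo))$ exactly meets this threshold, so the theorem reduces to a direct application of classical hyperbolic theory after the flow-aligned change of variables; the nontrivial microlocal work of Section \ref{micro} is needed precisely when this hypothesis fails, which is the situation when $h = u_t + Uu_x$ arises from finite-energy plate dynamics and is only globally $L^2$.
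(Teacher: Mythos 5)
Your change of variables $\tilde\phi(\tilde x,y,z,\tilde t)=\phi(\tilde x+U\tilde t,y,z,\tilde t)$ is correct and is exactly the Galilean shift the paper uses later (Section \ref{micro}, where $\eta(\xb,t)=\phi(\xb+Ut\mathbf e_1,t)$); the verification that $\tilde h\in C([0,\infty);H^{1/2}(\realstwo))$ and that the initial data transform as $(\phi_0,\phi_1+U\partial_x\phi_0)\in H^1\times L^2$ is also fine. Since the paper states Theorem \ref{flowpot} by citation to \cite{b-c-1,springer,miyatake} without proof, your strategy of reducing to the standard Neumann wave problem and invoking the classical half-space theory is the right route and matches the paper's own usage.

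However, there is a factual error in your justification that directly contradicts the paper's central thesis: you assert that the ``non-degeneracy of the Lopatinski determinant for the Neumann boundary operator'' is what provides the $H^{1/2}\to H^1$ lift. This is false. The Lopatinski determinant for the Neumann condition is (up to factors) $\sqrt{s}=\sqrt{\tau^2+|\boldsymbol\mu|^2}$, which \emph{does} degenerate on the glancing set ($|\sigma|\sim|\boldsymbol\mu|$), and it is precisely this degeneracy that constitutes the failure of the uniform Lopatinski condition announced throughout the paper (see the introduction and the estimate \eqref{sroot*}, which gives only a quarter-power gain of $(1+|\boldsymbol\mu|^2/\xi^2)^{1/4}$ in the characteristic sector rather than the half-power one has in sectors $(e)$ and $(h)$). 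The reason $H^{1/2}$ spatial regularity of $h$ is sufficient is not that Lopatinski holds uniformly, but that the half-derivative surplus in the data exactly compensates the half-derivative loss incurred in the characteristic sector (this is what Lemma \ref{roots} and Lemma \ref{le:m} quantify). As written, your explanation would make the entire microlocal apparatus of Section \ref{micro} unnecessary, so it is worth correcting even though it does not invalidate the outcome: the classical theorem you are citing is true, just for a more delicate reason than you state.
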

\begin{remark}\label{tatarucomment}
Finite energy $H^1(\Omega) \times L^2(\Omega)$ solutions would be obtained if $h\in H^{1/3}((0,T) \times \mathbb R^2)$ \cite{tataru}. \end{remark}

A key issue we address here is in the lack of  sufficient regularity for $h=[u_t+Uu_x]_{\text{ext}}$ in the coupled system. This is to say that, for our coupled system \eqref{flowplate}, we have \begin{equation}\label{h}
h \equiv [u_t + U u_{x_1}]_{\text{ext}} \in C([0,T];L^2(\mathbb R^2)).
\end{equation}
So, as per \eqref{flowpot}, there is insufficient regularity of the data to consider decoupling the flow and plate,  and work within the framework of finite energy solutions as was done in previous work on stability \cite{b-c,b-c-1,Abhishek,ryz,ryz2}. On the other hand, we do have solutions to the full flow-plate system \eqref{flowplate} via our well-posedness result Theorem \ref{nonlinearsolution}.

Now, let us denote	
$\phi^*$ by the solution to \eqref{floweq*} with $h \equiv 0$, and $\phi^{**}$ as the solution to \eqref{floweq*} with $\phi_0=\phi_1\equiv 0$.
We may look at $\phi^*$ and $\phi^{**}$ separately via linearity of the flow equation. With  $(\phi_0,\phi_1) \in H^1(\R_+^3)\times  L^2(\R_+^3)$ one obtains \cite{supersonic,miyatake}:~~
 ~$\ds (\phi^*(t),  \phi^{*}_t(t)) \in   H^1(\R_+^3)\times  L^2(\R_+^3).$ Thus, by the established well-posedness in Theorem \ref{nonlinearsolution} for $\phi = \phi^*+\phi^{**}$
 we also have that ~$\ds 
 (\phi^{**}(t),  \phi^{**}_t(t)) \in  H^1(\R_+^3)\times  L^2(\R_+^3).
 $

 For the analysis of $\phi^*$ we use the tools developed in \cite{b-c,b-c-1}, namely, the Kirchhoff-type representation for the solution  $\phi^*(\xb,t)$
in $\R_+^3$  \cite[Theorem~6.6.12]{springer}. We  conclude that
if the initial data   $\phi_0$ and $\phi_1$ are  localized in the ball $K_{\rho_0} $,
then by Huygen's principle in 3D,
   one obtains for any $ \rho$ that  $\phi^*(\xb,t)\equiv 0$ for all $\xb\in  K_{ \rho} $
and $t\ge t_{\rho}$. 
Thus $\phi^*$ tends to zero (exponentially) in the sense of the local flow energy, i.e.,  \begin{equation}\label{starstable} \|\nabla \phi^*(t)\|_{L^2( K_{\rho} )}^2 + \|\phi^*_t(t)\|_{L^2( K_{ \rho} )} \to 0, ~~ t \to \infty,\end{equation} for any $ \rho>0$.
Also, in this case,
\begin{equation}\label{traceh}
\big(\partial_t+U\partial_{x_1}\big)\gamma[\phi^*]\equiv0,~~~\xb\in \Omega,~t\ge t_{\rho}.
\end{equation}

On the other hand, for $\phi^{**}$ as above, we have another representation. For the following calculations, we assume sufficient regularity, which will produce point-wise representations that will hold, at minimum, in distribution for weak solutions to \eqref{flowplate}. Let $H(\cdot) $ be the Heaviside function.
\begin{theorem}\label{flowformula}
	Let ~$\ds h(\xb,t) =[u_t (x_1,x_2,t)+Uu_{x_1}(x_1,x_2,t)]_{\text{ext}},$  then there exists a time $t^*(\Omega,U)$ such that, for all $t>t^*$, we have the following representation for a solution:
	\begin{equation}\label{phidef}
	\phi^{**}(\xb,t) = {-}\dfrac{H(t-x_3) }{2\pi}\int_{x_3}^{t}\int_0^{2\pi}(u^{\dag}_t(\xb,t,s,\theta)+Uu^{\dag}_{x_1}(\xb,t,s,\theta))d\theta ds.
	\end{equation}
	 where $$f^{\dag}(\xb,t,s,\theta)=f\left(x_1-\kappa_1(\theta,s,x_3), y-\kappa_2(\theta,s,x_3), t-s\right),~~\text{and}$$  $$\kappa_1(\theta,s,x_3)=Us+\sqrt{s^2-x_3^2}\sin \theta,~~\kappa_2(\theta,s,x_3) = \sqrt{s^2-x_3^2}\cos \theta.$$
	\end{theorem}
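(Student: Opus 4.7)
The plan is to derive the representation by combining a Galilean change of variable (to absorb the convection $U\partial_{x_1}$) with the classical 3D Kirchhoff retarded-potential formula, applied after an even reflection across $\{z=0\}$. I will argue first under a smoothness hypothesis on the data; the resulting pointwise identity is then transferred to weak solutions by density, since Theorem~\ref{nonlinearsolution} supplies strong, compatible approximating solutions whose Lipschitz dependence \eqref{lip} allows passage to the limit in the distributional sense.

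First, set $\tilde{\phi}(\xi,y,z,t):=\phi^{**}(\xi+Ut,y,z,t)$. A direct chain-rule computation gives $\tilde{\phi}_t=[(\partial_t+U\partial_{x_1})\phi^{**}]$ and $\Delta\tilde{\phi}=\Delta\phi^{**}$, both evaluated at the shifted argument, so that $\tilde{\phi}$ satisfies the unperturbed half-space wave equation $\tilde{\phi}_{tt}=\Delta\tilde{\phi}$ on $\realsthree_+\times(0,T)$ with zero initial data and Neumann trace $\partial_z\tilde{\phi}\big|_{z=0}=\tilde{h}(\xi,y,t):=h(\xi+Ut,y,t)$. I then extend $\tilde{\phi}$ evenly across $\{z=0\}$ to $\Phi$ on $\reals^3$; the parity forces a jump of $+2\tilde{h}$ in $\partial_z\Phi$ at $z=0$, whence
\[
\Box\Phi=-2\,\tilde{h}(x,y,t)\,\delta(z)\quad\text{on }\reals^3\times(0,T),
\]
with vanishing Cauchy data.

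Next, convolving with the retarded fundamental solution $G(\xb,t)=\delta(t-|\xb|)/(4\pi|\xb|)$, collapsing the $\delta(z_0)$ in the source, and passing to polar coordinates $(x_0-x,y_0-y)=\rho(\sin\theta,\cos\theta)$ followed by the substitution $s=\sqrt{\rho^2+z^2}$ (so that $s\,ds=\rho\,d\rho$ cancels the $1/s$ factor in Kirchhoff's formula) yields, for $z>0$,
\[
\tilde{\phi}(x,y,z,t)=-\frac{H(t-z)}{2\pi}\int_z^t\!\!\int_0^{2\pi}\!\tilde{h}\!\left(x+\sqrt{s^2-z^2}\sin\theta,\; y+\sqrt{s^2-z^2}\cos\theta,\; t-s\right)d\theta\,ds.
\]
Reverting via $\phi^{**}(x,y,z,t)=\tilde{\phi}(x-Ut,y,z,t)$ and unfolding $\tilde{h}(\xi,\eta,\tau)=h(\xi+U\tau,\eta,\tau)$ at $\tau=t-s$ combines the shifts $-Ut+U(t-s)=-Us$ inside the first slot of $h=[u_t+Uu_{x_1}]_{\text{ext}}$; a final substitution $\theta\mapsto\theta+\pi$ (invariant on $[0,2\pi]$, flipping $\sin\theta$ and $\cos\theta$) aligns the sign conventions with $\kappa_1=Us+\sqrt{s^2-x_3^2}\sin\theta$ and $\kappa_2=\sqrt{s^2-x_3^2}\cos\theta$, reproducing the stated representation. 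The factor $H(t-x_3)$ expresses finite-speed propagation: the backward light cone meets $\{z_0=0\}$ only when $t\ge x_3$.

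The threshold $t^*(\Omega,U)$ is not forced at the level of $\phi^{**}$ alone---the derivation is valid for every $t>x_3$---but is needed in subsequent applications where one wishes the formula to represent the full flow $\phi=\phi^*+\phi^{**}$: by Huygens' principle (cf.~\eqref{starstable}), the homogeneous part $\phi^*$ extinguishes on each $K_\rho$ only after some finite time $t_\rho$, and $t^*$ serves as the uniform threshold ensuring $\phi\equiv\phi^{**}$ on the regions where the formula is invoked. The main (mild) obstacle is regularity: since $h\in C([0,T];L^2(\reals^2))$ only (see \eqref{h}), the pointwise identity must be established classically on smooth data and transferred to the weak-solution level distributionally, the integrals in $(s,\theta)$ being understood in the distributional sense in the limiting procedure.
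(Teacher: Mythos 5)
Your derivation is correct and coincides with the standard route used to obtain the Kirchhoff-type representation that the paper simply cites from the literature (the paper invokes \cite[Theorem 6.6.12]{springer} and \cite{b-c,b-c-1} without reproducing the argument). The Galilean substitution $\tilde\phi(\xi,y,z,t)=\phi^{**}(\xi+Ut,y,z,t)$, even reflection across $\{z=0\}$ producing $\Box\Phi=-2\tilde h(x,y,t)\delta(z)$, convolution with the retarded fundamental solution, and the polar substitution $s=\sqrt{\rho^2+z^2}$ (which cancels the $1/|\xb-\xb'|$ weight) are exactly the ingredients behind that reference. Your bookkeeping for the shift $-Ut+U(t-s)=-Us$, the $\theta\mapsto\theta+\pi$ reindexing to match the $\kappa_1,\kappa_2$ sign conventions, and the finite-speed factor $H(t-x_3)$ are all verified. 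Your clarifying remark is also accurate and actually sharpens the theorem's statement: the representation \eqref{phidef} holds for every $t>x_3$, and $t^*$ only enters once one truncates the $s$-integral at $t^*$ (using compact support of $u$ in $\Omega$) to obtain the differentiated identities \eqref{thisonenow}--\eqref{partialder}. The regularity caveat (deriving the identity for strong solutions and then passing to the weak-solution level by density via Theorem \ref{nonlinearsolution} and \eqref{lip}) is exactly the correct handling of the low regularity of $h$ in \eqref{h}. In short, this is not a new route but a correct, self-contained reconstruction of the proof behind the result the paper outsources to the references.
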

	Letting the time $t^*$ be given by:
	\begin{equation}\label{tstar}
	t^*=\inf \{ t~:~\xb(U,\theta, s) \notin \Omega \text{ for all } (x_1,x_2) \in \Omega, ~\theta \in [0,2\pi], \text{ and } s>t\}, ~\text{with}
	\end{equation}  \begin{equation}\label{xescape}\xb(U,\theta,s) = \left(x_1-(U+\sin \theta)s,x_2-s\cos\theta\right) \subset \realstwo\end{equation} (not to be confused with $\xb = (x_1,x_2)$),
	 partials may be computed directly (see \cite{Abhishek}) by waiting a sufficient time. The two most relevant are 
	\begin{align}\label{thisonenow}
\phi^{**}_{t}(\xb, t) = & ~\dfrac{1}{2\pi}\Big\{\int_0^{2\pi}u_{t}^{\dag}(\xb,t,t^*,\theta) d\theta -\int_0^{2\pi} u_{t}^{\dag}(\xb,t,x_3,\theta )d\theta \\\nonumber
&+U\int_{x_3}^{t^*}\int_0^{2\pi}[\partial_{x_1} u_{t}^{\dag}](\xb,t,s,\theta) d\theta ds+\int_z^{t^*}\int_0^{2\pi}\dfrac{s}{\sqrt{s^2-z^2}}[M_{\theta}u_{t}^{\dag}](\xb,t,s,\theta) d\theta ds\Big\}, \end{align} 
with $M_{\theta}:=\sin(\theta)\partial_{x_1}+\cos(\theta)\partial_{x_2}$ 
 and for $i=1,2$: {\small \begin{align} \label{partialder} 
\phi^{**}_{x_i}(x,t) =& ~\frac{1}{2\pi}\int_{x_3}^{t^*}\int_0^{2\pi} U\partial_{x_1}u_{x_i}^\dagger(x,t,s,\theta) d\theta ds +\frac{1}{2\pi}\int_{x_3}^{t^*}\int_0^{2\pi} \partial_tu_{x_i}^\dagger(x,t,s,\theta) d\theta ds. \\ \nonumber
 \partial_{x_3}\phi^{**}(x,t)=&~ (\partial_t+U\partial_{x_1})u(x_1-Ux_3,x_2,t-x_3)+\frac{1}{2\pi}\int_{x_3}^{t^*}\frac{x_3}{\sqrt{s^2-x_3^2}}\int_0^{2\pi} (\partial_t+U\partial_{x_1})[M_\theta u]^\dagger(x,t,s,\theta) d\theta. \end{align}}

From here, we can explicitly solve for the Dirichlet trace of the material derivative appearing in the plate equation in terms of the Neumann data $h=[u_t+Uu_{x_1}]_{\text{ext}}$.
	Considering the term $$r_{\Omega}\gamma_0\left[\left(\partial_t+U\partial_{x_1}\right) \phi\right]=r_{\Omega}\gamma_0\left[\left(\partial_t+U\partial_{x_1}\right) \phi^{**}\right]$$ for $t>t_{\rho_0}$ by \eqref{traceh}, where  $supp(\phi_0),~supp(\phi_1) \subset K_{\rho_0}$.  Using the above expressions for $\partial_t\phi^{**}$ \eqref{thisonenow} and $\partial_{x_1}\phi^{**}$ \eqref{partialder}, we obtain (after sufficiently long time): 
	\begin{equation}\label{delayt}
	r_{\Omega}\left(\partial_t+U\partial_{x_1}\right)\gamma_0\left[\phi \right]=-(\partial_t+U\partial_{x_1})u-q(u^t),
	\end{equation}
	for $t\geq \max\{t^*,t_{\rho_0}\}$ ($t^*$ as defined above in \eqref{tstar}) with
	\begin{equation}\label{qyou}
	q(u^t)=\dfrac{1}{2\pi}\int_0^{t^*}ds\int_0^{2\pi}M^2_{\theta}\left[u\right]_{ext}(\mathbf x(U,\theta,s)d\theta.
	\end{equation}
	The notation for $u^t$ indicates dependence on the entire set $\big\{u(t+s)~:~s \in (-t^*,0)\big\},$ where $t^*(\Omega,U)$ is the characteristic ``memory'' time given in \eqref{tstar} \cite{springer,cdelay1,cdelay2,cdelay3}.

\subsection{Plate Attractor}
From the point-wise formulate of the previous section---including the N-to-D map in \eqref{delayt}---we obtain the theorem below by waiting $t^{\#}=\max\{t^*,~t_{\rho}\}$.
\begin{theorem}\label{rewrite}
	Let $k_0 \ge 0$, $|U|<1$  and $(\phi_0,\phi_1; u_0,u_1) \in   Y_{\rho_0}$.  Then the there exists a time $t^{\#}(\rho_0,U,\Omega) > 0$ such that for all $t>t^{\#}$ a plate solution $u(t)$ to (\ref{flowplate}) satisfies:
	\begin{equation}\label{reducedplate}
	u_{tt}+\Delta^2u+k_0u_t+f_V(u)=p_0-(\partial_t+U\partial_{x_1})u-q(u^t)
	\end{equation}
and with $M_{\theta}$ and $t^*$ as in the previous section, we have:
	\begin{equation}\label{potential*}
	q(u^t)=\dfrac{1}{2\pi}\int_0^{t^*}ds\int_0^{2\pi}d\theta [M^2_{\theta} u_{\text{ext}}](x_1-(U+\sin \theta)s,x_2-s\cos \theta, t-s).
	\end{equation}
	The sense $u(\xb,t)$ satisfies \eqref{reducedplate} is the same as how $u(\xb,t)$ satisfies \eqref{flowplate} (strong, generalized, weak). 
\end{theorem}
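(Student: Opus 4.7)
The strategy is a pointwise decoupling of the flow via the linear splitting of the flow variable into a component driven purely by the localized initial data and a component driven purely by the Neumann source given by the plate's material derivative. Set $t^{\#}:=\max\{t^*,t_{\rho_0}\}$, and write $\phi=\phi^*+\phi^{**}$, where $\phi^*$ solves the decoupled Neumann flow problem \eqref{floweq*} with initial data $(\phi_0,\phi_1)$ and homogeneous boundary datum $h\equiv 0$, while $\phi^{**}$ solves \eqref{floweq*} with zero initial data and boundary datum $h=[u_t+Uu_{x_1}]_{\text{ext}}$. Since $(\phi_0,\phi_1)$ are supported in $K_{\rho_0}$, the strong Huygens principle in three dimensions (applied in \cite{b-c,b-c-1,springer}) yields that $\phi^*(\xb,t)\equiv 0$ on any bounded set for $t\ge t_{\rho_0}$, and in particular the trace identity \eqref{traceh} holds on $\Omega\times(t_{\rho_0},\infty)$. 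Consequently, the material-derivative trace $r_\Omega(\partial_t+U\partial_{x_1})\gamma_0[\phi]$ appearing on the right-hand side of \eqref{flowplate}$_1$ collapses to the corresponding quantity for $\phi^{**}$ for all $t>t_{\rho_0}$.

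For strong solutions, insert the Kirchhoff-type representation \eqref{phidef} for $\phi^{**}$, differentiate under the integral sign, and evaluate the resulting expressions \eqref{thisonenow} and \eqref{partialder} at $x_3=0$ to obtain the boundary trace. In those formulae the $s$-integral is naturally over $[x_3, t]$; for $t>t^*$, the definition of $t^*$ in \eqref{tstar} guarantees that the characteristic ray $\xb(U,\theta,s)$ exits $\Omega$ for all $s\ge t^*$, so the extension by zero of $u$ kills the tail and the $s$-integral truncates to $[x_3,t^*]$ (hence to $[0,t^*]$ after taking $x_3=0$). Combining the truncated traces of $\partial_t\phi^{**}$ and $U\partial_{x_1}\phi^{**}$ through the material-derivative operator gives exactly \eqref{delayt}, with the finite-memory kernel $q(u^t)$ of the form \eqref{qyou}, which on writing $\xb(U,\theta,s)$ out coordinate-wise is precisely \eqref{potential*}. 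Inserting \eqref{delayt} into the right-hand side of \eqref{flowplate}$_1$ produces \eqref{reducedplate} for all $t>t^{\#}$.

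The main technical obstacle is that the pointwise manipulations above are justified a priori only for strong solutions, where the regularity needed to differentiate under the Kirchhoff integral and to interpret the boundary trace pointwise is available. To extend the identity to generalized and weak solutions, the plan is to approximate the given data in $Y_s$ by smooth data that additionally satisfy the compatibility condition $\partial_z\phi_0=(u_1+Uu_{0x})_{\text{ext}}$, run the strong-solution derivation above, and then pass to the limit using the local Lipschitz continuity of the semigroup \eqref{lip} in Theorem \ref{nonlinearsolution}. The memory term $q(u^t)$ is continuous in the plate trajectory: by Lemma \ref{globalbound} one has $M_\theta^2 u\in L^\infty(0,T;L^2(\Omega))$ uniformly in $\theta$, so the finite $s$-integral in \eqref{potential*} converges in $L^2(\Omega)$ along the approximating sequence, and the remaining terms in \eqref{reducedplate} pass to the limit in the sense dictated by the regularity class of the target solution. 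This yields \eqref{reducedplate} in precisely the same sense (strong, generalized, or weak) in which $(\phi,u)$ solves \eqref{flowplate}.
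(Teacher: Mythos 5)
Your proposal matches the paper's approach essentially exactly: the paper derives Theorem \ref{rewrite} directly from the decomposition $\phi=\phi^*+\phi^{**}$, the Huygens-principle dispersal of $\phi^*$, and the Kirchhoff-type representation \eqref{phidef} whose differentiated traces \eqref{thisonenow}--\eqref{partialder} yield the N-to-D identity \eqref{delayt} after waiting $t>t^{\#}=\max\{t^*,t_{\rho_0}\}$. Your final paragraph extending the pointwise identity from strong to generalized and weak solutions via $Y_s$-approximation and the local Lipschitz bound \eqref{lip} is a correct filling-in of a step the paper leaves implicit in its definition of generalized solutions.
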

\noindent With this {\em reduction} in hand, we can perform a stability analysis directly on this plate equation with memory; this is the main contribution of \cite{delay}, which provides:
 \begin{theorem}\label{th:main2}
Let $k_0 \ge 0$, $|U|<1$, with $F_0 \in H^{4}(\Omega)$ and $p_0 \in L^2(\Omega)$.
 Then there exists a compact set $\mathscr{U}\subset \subset Y_{pl}$ (in fact $\mathscr{U} \subset \big(H^4(\Omega)\cap H_0^2(\Omega)\big) \times H_0^2(\Omega)$) of finite fractal dimension such that $$\lim_{t\to\infty} d_{\mathcal H} \big( (u(t),u_t(t)),\mathscr U\big)=\lim_{t \to \infty}\inf_{(\nu_0,\nu_1) \in \mathscr U} \big( ||u(t)-\nu_0||_2^2+||u_t(t)-\nu_1||^2\big)=0$$
for any generalized solution $(\phi,\phi_t;u,u_t)$ to (\ref{flowplate}) with $(\phi_0,\phi_1; u_0,u_1) \in   Y_{\rho_0}$. 
\end{theorem}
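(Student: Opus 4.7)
The plan is to leverage the crucial reduction provided by Theorem \ref{rewrite}: after the characteristic wait time $t^{\#}$, the plate evolves autonomously according to
\begin{equation*}
u_{tt}+(k_0+1)u_t+U\partial_{x_1}u+\Delta^2 u+f_V(u)=p_0-q(u^t),
\end{equation*}
where the material-derivative term on the right of \eqref{reducedplate} has been absorbed into an effective friction. The key observation is that this effective damping coefficient $k_0+1$ is strictly positive \emph{even when $k_0=0$}, which is precisely why the theorem permits $k_0 \ge 0$. I would recast this as a semiflow $\widetilde S_t$ on the extended phase space $\widetilde Y_{pl} \equiv Y_{pl} \times C([-t^*,0];H_0^2(\Omega))$, which records the length-$t^*$ history segment needed to evaluate the memory functional $q(u^t)$.

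\emph{Dissipativity.} Uniform-in-time boundedness in $Y_{pl}$ is already given by Lemma \ref{globalbound}, obtained from the energy identity and Lemma \ref{energybound}. To upgrade this to an absorbing set in $\widetilde Y_{pl}$, I would test the reduced equation with $u_t+\eta u$ for small $\eta>0$ and invoke Proposition \ref{epsilonlemma} to absorb the low-frequency $\|u\|_{2-\eta}$ norm arising from the von Karman potential. The gyroscopic term $U\partial_{x_1}u$ is skew-symmetric and drops on energy pairing, and the memory loading obeys $\|q(u^t)\|_0 \lesssim \sup_{s \in [-t^*,0]}\|u(t+s)\|_2$ by \eqref{potential*}, which is absorbable into the dissipative Lyapunov estimate on the extended space.

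\emph{Asymptotic smoothness --- the main obstacle.} For two trajectories $u^1,u^2$ in the absorbing set, let $z=u^1-u^2$ and derive its linear plate-with-memory equation. The target is the Chueshov--Lasiecka quasi-stability estimate
\begin{equation*}
E_z(T) \le \gamma E_z(0) + C_{T,R}\sup_{t \in [-t^*,T]}\|z(t)\|_{2-\delta}^2, \qquad 0 < \gamma < 1.
\end{equation*}
I would obtain this via the perturbed-energy multiplier $z_t+\eta z$ together with Theorem \ref{nonest} to bound $\|f_V(u^1)-f_V(u^2)\|_{-\delta}$ by the compact lower-order norm of $z$, Lemma \ref{l:airy-1} for the sharp Airy regularity, and a careful treatment of the memory difference $q(u^1_t)-q(u^2_t)$. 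This last piece is the technical heart of the argument: one must shift the second-order spatial operators $M_\theta^2$ in \eqref{potential*} off the memory kernel by integration by parts, so that the memory difference acts as a compact operator on the trajectory via Aubin--Lions on the finite window $[0,t^*]$. Producing a \emph{genuinely compact} estimate, rather than a merely bounded one, is the bottleneck, because the memory kernel transports along wave characteristics and does not smooth in the interior variables.

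\emph{Attractor, finite dimension, and regularity.} The quasi-stability estimate implies asymptotic smoothness of $\widetilde S_t$ on $\widetilde Y_{pl}$, so the standard Hale-type theorem together with the absorbing set yields a global compact attractor $\widetilde{\mathscr U}$. By the abstract Chueshov--Lasiecka machinery, the same estimate forces $\widetilde{\mathscr U}$ to have finite fractal dimension. Repeating the quasi-stability argument at the level of $(u_t,u_{tt})$, obtained by differentiating the reduced equation in time and invoking Lemma \ref{l:airy-1} once more, promotes elements of the attractor to $[H^4(\Omega) \cap H_0^2(\Omega)]\times H_0^2(\Omega)$. Projecting $\widetilde{\mathscr U}$ onto the $(u,u_t)$ coordinate gives the desired compact set $\mathscr U \subset Y_{pl}$ with the stated Hausdorff-semidistance convergence.
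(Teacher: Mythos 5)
Your proposal follows essentially the same route the paper takes (and which it attributes to \cite{delay}): reduce to the delayed plate equation via Theorem \ref{rewrite}, absorb the material-derivative trace into an effective damping $k_0+1$ to handle $k_0=0$, recast as a dynamical system on an extended phase space carrying the $[-t^*,0]$ history, obtain an absorbing set and a Chueshov--Lasiecka quasi-stability estimate (via Proposition \ref{epsilonlemma}, Theorem \ref{nonest}, and a compact treatment of the memory difference), deduce a finite-dimensional compact attractor, and project onto $Y_{pl}$. Two small remarks: the paper uses $L^2(-t^*,0;H_0^2(\Omega))$ rather than $C([-t^*,0];H_0^2(\Omega))$ for the history component, and the pairing $\langle U\partial_{x_1}u,u_t\rangle$ does not actually vanish (only $\langle\partial_{x_1}u,u\rangle=0$ does) --- it is a lower-order term absorbed by Young's inequality and the low-frequency control, so your conclusion stands even though the claim that it ``drops on energy pairing'' is imprecise.
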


The proof of this result follows from rewriting the  dynamical system  $(S_t, Y_{\rho})$ generated by  (\ref{flowplate}) as a delayed  dynamical system corresponding to solutions of (\ref{reducedplate}), which we denote as $(T_t,\mathbf H)$ with $\mathbf H=H_0^2(\Omega)\times L^2(\Omega) \times L^2(-t^*,0;H_0^2(\Omega))$. We operate on this reduced plate dynamical system, and show the existence of a compact global attractor $\mathbf A \subset \mathbf H$; we then take $\mathscr U$ to be the projection of $\mathbf{A}$ on $H_0^2(\Omega)\times L^2(\Omega)$, which yields Theorem \ref{th:main2}.

\subsection{Weak Convergence and Strong Plate Convergence}\label{convergencesec}

We begin by noting that the existence of the compact attracting set $\mathscr U$ for the plate dynamics in Theorem \ref{th:main2}  we infer that for any initial data $y_0=(\phi_0,\phi_1;u_0,u_1)\in Y_{\rho_0}$ and any sequence of times $t_n \to \infty$ there exists a subsequence of times $t_{n_k} \to \infty$ and a point $(\hat u, \hat w) \in Y_{pl}=H_0^2(\Omega)\times L^2(\Omega)$ such that $(u(t_{n_k}),u_t(t_{n_k}) \to (\hat u, \hat w)$ strongly in $Y_{pl}$. Indeed, by the definition of convergence in the Hausdorff distance of $(u(t),u_t(t)) \to \mathscr U$, and the compactness of the $\mathscr U$, we generate a convergence subsequence on $\mathscr U$. By uniqueness of limits, we obtain aforesaid point and subsequence.

Additionally, by the global-in-time bound given in Lemma \ref{globalbound}, we know that the set $\{S_{t}(y_0)\}$ is bounded in $Y$, and hence for any sequence $t_{n} \to \infty$ there exists a subsequence $t_{n_k}$, and a point $(\hat \phi, \hat \psi;\hat u, \hat w) \in  Y$ such that $S_{t_{n_k}}(y_0) \rightharpoonup  ( \hat \phi, \hat \psi;\hat u, \hat w)$ in $Y_{\rho}$ for any $\rho$. Utilizing these results in conjunction, for any sequence of times $t_{n} \to \infty$ there is a subsequence such that both of the above convergences hold simultaneously.
We  collect the above  convergences, and add two additional points which will be central to the arguments that follow.
\begin{theorem}\label{convergenceprops}
For any initial data $y_0=(\phi_0,\phi_1;u_0,u_1)\in Y_{\rho_0}$ and any sequence of times $t_n \to \infty$ there is $t_{n_k}$ and a point $\hat y = (\hat \phi, \hat \psi;\hat u, \hat w)$ so that for a  generalized solution $(\phi,\phi_t;u,u_t)$ we have:
 \begin{enumerate}
 \item $S_{t_{n_k}}(y_0) \rightharpoonup  \hat y$ in $Y_{\rho}$ for any $\rho>0$. \vskip.2cm
\item
$\| u(t_{n_k}) - \hat{u}\|_{2,\Omega} \rightarrow 0$. \vskip.2cm
\item $\|u_t(t)\|^2_{0} \to 0, ~t \to \infty$, and hence $\hat w =0$. \vskip.2cm
\item Along the sequence of times $t_{n_k} \to \infty$ with corresponding limit point $\hat u$, we have \begin{align}
\sup_{\tau \in [-c,c]}\|u(t_{n_k}+\tau)-\hat u\|_{2-\delta,\Omega} \to 0 ~\text{ for any fixed}~ c > 0, ~\delta\in (0,2).
\end{align}
\end{enumerate}
\end{theorem}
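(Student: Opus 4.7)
My plan is to prove the four claims in order, relying on Theorem \ref{th:main2} (compact plate attractor $\mathscr{U}$), Corollary \ref{dissint} (finite dissipation integral), and Lemma \ref{globalbound} (uniform $Y$-boundedness of trajectories). For item 1, the bound $\sup_{t\ge 0}\|S_t(y_0)\|_{Y}<\infty$ together with the Hilbert (hence reflexive) structure of $Y=W_1(\realsthree_+)\times L^2(\realsthree_+)\times H_0^2(\Omega)\times L^2(\Omega)$ gives weak precompactness of the orbit. So from any $t_n\to\infty$ I extract a subsequence $t_{n_k}$ and a point $\hat y \in Y$ with $S_{t_{n_k}}(y_0)\rightharpoonup \hat y$ in $Y$; weak convergence in every $Y_\rho$ follows by restriction to $K_\rho$. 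For item 2, the Hausdorff convergence $(u(t),u_t(t))\to\mathscr{U}$ and compactness of $\mathscr{U}\subset\subset Y_{pl}$ let me refine the subsequence of item 1 so that $(u(t_{n_k}),u_t(t_{n_k}))$ converges strongly in $Y_{pl}$; uniqueness of limits forces this strong limit to be $(\hat u,\hat w)$.

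For item 3, I combine $\int_0^\infty\|u_t(t)\|_{0}^2\,dt<\infty$ from Corollary \ref{dissint} with uniform continuity of $t\mapsto u_t(t)\in L^2(\Omega)$ on $[t^{\#},\infty)$; any bounded non-negative function with such $L^1$-integral on $[0,\infty)$ and uniform continuity must vanish at infinity. To obtain the uniform continuity I invoke the reduced plate equation \eqref{reducedplate}, which is valid after $t^{\#}$: Lemma \ref{globalbound} gives $u\in H_0^2(\Omega)$ uniformly, Theorem \ref{nonest} bounds $f_V(u)$ in $L^2(\Omega)$, and the memory term $q(u^t)$, involving only second spatial derivatives of $u$, is uniformly $L^2(\Omega)$-bounded as well. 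Hence $u_{tt}$ is uniformly bounded in $H^{-2}(\Omega)$, and interpolation with the uniform $L^2(\Omega)$-bound on $u_t$ gives H\"older continuity of $\|u_t(\cdot)\|_{0}$. This yields $\|u_t(t)\|_{0}\to 0$, and passing to the subsequence of item 2 forces $\hat w=0$.

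For item 4, using item 3 I first establish $L^2(\Omega)$-equicontinuity of $u$ on intervals $[t_{n_k}-c,t_{n_k}+c]$ via
\[
\sup_{\tau\in[-c,c]}\|u(t_{n_k}+\tau)-u(t_{n_k})\|_{0}\;\le\; \int_{t_{n_k}-c}^{t_{n_k}+c}\|u_t(s)\|_{0}\,ds\;\le\;\sqrt{2c}\Big(\int_{t_{n_k}-c}^{t_{n_k}+c}\|u_t\|_{0}^{2}\,ds\Big)^{1/2}\longrightarrow 0,
\]
by Corollary \ref{dissint}. Combined with the strong convergence $u(t_{n_k})\to\hat u$ in $H_0^2(\Omega)$ from item 2 and the uniform $H_0^2(\Omega)$-bound from Lemma \ref{globalbound}, the standard interpolation $\|v\|_{2-\delta}\le C\|v\|_0^{\delta/2}\|v\|_2^{1-\delta/2}$ upgrades the $L^2$-equicontinuity to the desired uniform $H^{2-\delta}$-convergence on $[-c,c]$. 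The main obstacle I foresee is the uniform-in-time continuity argument in item 3, where the temporal regularity of $u_{tt}$ must be extracted via the reduced plate formulation in the presence of the nonlocal von K\'arm\'an nonlinearity and memory term; the remaining steps then assemble cleanly from items 1--3 and interpolation.
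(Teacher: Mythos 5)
Your treatment of points 1 and 2 matches the paper: the uniform $Y$-bound (Lemma \ref{globalbound}) and reflexivity give weak sub\-sequential limits; Hausdorff attraction to the compact set $\mathscr U$ upgrades the plate component to strong $Y_{pl}$ convergence and uniqueness of limits identifies the two. Your treatment of point 4 is a small, harmless variant of the paper's: you use Cauchy--Schwarz and the dissipation integral directly, $\sup_{|\tau|\le c}\|u(t_{n_k}+\tau)-u(t_{n_k})\|_0\le\sqrt{2c}\bigl(\int_{t_{n_k}-c}^{t_{n_k}+c}\|u_t\|_0^2\bigr)^{1/2}\to0$, where the paper bounds the same quantity by $c\cdot\sup_{|\tau|\le c}\|u_t(t_{n_k}+\tau)\|_0$ and appeals to item 3; both are correct and both finish with the same interpolation $\|u(t)\|_{2-\delta}\le\|u(t)\|_0^{\delta/2}\|u(t)\|_2^{1-\delta/2}$.

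\textbf{Item 3 — genuine gap.} Here your route diverges from the paper's and the key step does not hold. You propose to apply Barbalat's lemma directly to the scalar function $t\mapsto\|u_t(t)\|_0^2$, for which you need (asymptotic) uniform continuity of $\|u_t(\cdot)\|_0$. To get it you assert that ``$u_{tt}$ uniformly bounded in $H^{-2}(\Omega)$, and interpolation with the uniform $L^2(\Omega)$-bound on $u_t$, gives Hölder continuity of $\|u_t(\cdot)\|_0$.'' This interpolation goes the wrong way: from $\|u_t(t_1)-u_t(t_2)\|_{H^{-2}}\lesssim|t_1-t_2|$ and $\|u_t(t_i)\|_{L^2}\lesssim1$ you may interpolate to obtain Hölder continuity of $u_t$ only in negative Sobolev spaces $H^{-s}$, $s>0$, not in $L^2$. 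To recover continuity in $L^2$ you would need a \emph{positive}-order uniform-in-time bound on $u_t$ (e.g.\ $u_t\in H^{s}$, $s>0$, uniformly), which is not available: the finite-energy bound gives only $u_t\in L^2(\Omega)$, and the attractor $\mathscr U\subset H_0^2\times H_0^2$ is a target set, not a uniform-in-time regularity estimate. A counterexample in spirit is $v(t)=a(t)\sin(nx)$ with $|a|\le1$, $|a'|\lesssim n^2$: then $v'$ is uniformly bounded in $H^{-2}$ and $v$ in $L^2$, yet $\|v(t)\|_0=|a(t)|$ has no modulus of continuity uniform in $n$.

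The paper avoids this precisely by fixing a test function $\eta\in C_0^\infty(\Omega)$, bounding $\bigl|\partial_t\langle u_t,\eta\rangle\bigr|=\bigl|\langle u_{tt},\eta\rangle\bigr|$ uniformly using the reduced equation (here the smoothness of $\eta$ absorbs the loss in $u_{tt}\in H^{-2}$), applying Barbalat to the scalar $\langle u_t(\cdot),\eta\rangle$ to conclude $u_t(t)\to0$ in $\mathcal D'(\Omega)$, and only then invoking the strong subsequential $L^2$-convergence from the compact attractor to identify all such limits with $0$, hence $\|u_t(t)\|_0\to0$. In short, what you are missing is the test-function localization plus the attractor-based upgrade from distributional to strong convergence; without it, the uniform continuity you rely on is false.
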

\begin{proof}[Proof of Theorem \ref{convergenceprops}]
{{Points (1.) and (2.) follow as described above, through compactness of the attracting set $\mathscr U \subset \subset Y_{pl}$ and bounded dynamics system dynamics (Theorem \ref{th:main2} and Lemma \ref{globalbound}, resp.).}

To establish point (3.), we will prove that, for a given generalized solution, $u_t(t) \to 0$ in $\mathcal D'(\Omega)$, which is to say that for any $\eta \in C_0^{\infty}(\Omega)$ we have
$$\lim_{t\to\infty} \langle u_t(t),\eta \rangle_{L^2(\Omega)} =0.$$
Given this fact,  and the strong subsequential limits given from the attractor, we will boost the pointwise-in-time convergence in $\mathcal D'(\Omega)$ to that of $L^2(\Omega)$, and identify all subsequential limits with zero. 

To show $u_t(t) \to 0 $ in $\mathcal D'(\Omega)$, we will operate on the quantities $\langle u_t(t),\eta \rangle_{\Omega}$ and its time derivative for a given $\eta \in C_0^{\infty}(\Omega)$.
 We first consider a {\em strong solution}, and show that the quantity $\partial_t\langle u_t(t),\eta \rangle_{\Omega}$ is uniformly bounded for $t \in [0,\infty)$. We note, from the plate equation itself in \eqref{flowplate}, that \begin{align*}
|\lb u_{tt},\eta  \rb_{\Omega}|= &~ \big|-k\lb u_t,\eta  \rb_{\Omega}-\lb\Delta  u, \Delta \eta  \rb_{\Omega}+\lb f_V(u), \eta  \rb_{\Omega}+\lb p_0,\eta  \rb_{\Omega} +\lb \gamma_0[\phi_t+U\phi_x],\eta  \rb_{\Omega} \big|\\
 \le &~ \|\eta \|_{2,\Omega}\big(E_{pl}(t)+C\big)+|\lb \gamma_0[\phi_t+U\phi_x],\eta \rb_{\Omega}|,
\end{align*} where all quantities on the RHS are well-defined as $L^2(\Omega)$ inner products for strong solutions. We can invoke our reduction result, 
since we have assumed $|U|<1$ and that there exists a $\rho_0$ such that for $|\xb|>\rho_0$,~~ $\phi_0(\xb)=\phi_1(\xb)=0$. Utilizing Theorem \ref{rewrite}, we note that for sufficiently large times $t>t^\#(\rho_0,\Omega,U)$ we may utilize the formula for the trace of the flow, namely:
$$\gamma_0[\phi_t+U\phi_x]=-(u_t+Uu_x)-q(u^t), ~~\xb \in \Omega.$$ It is then follows that for {\bf sufficiently large times} $t>t^\#$: 
$$|\lb \gamma_0[\phi_t+U\phi_x],\eta\rb_{\Omega}| \le \|\eta\|_{0,\Omega}\cdot \sup_{\tau \in [t^\#,\infty)}\left\{ \|\Delta u\|^2+\|u_t\|^2\right\} \le C\left(||(u,u_t)||_{Y_{pl}}\right)\|\eta\|,$$ where in the final step we have used the boundedness in Theorem \ref{globalbound}. For smooth solutions, we thus obtain a uniform-in-time bound (on the time derivative) of the form 
$$\left|\dfrac{d}{dt}\langle u_t ,\eta \rangle_{\Omega}\right| \le C\left(||(u,u_t)||_{Y_{pl}}\right)||\eta||_{2,\Omega}.$$
By density (and the definition of a generalized solution), we can extend this inequality to a generalized solution.
Hence the quantity $\partial_t\langle u_t(t),\eta \rangle_{\Omega}$ is bounded {\em uniformly} in $t$ for any generalized solution satisfying the hypotheses at hand.

Now, from the finiteness of the dissipation integral, Corollary \ref{dissint} and the above, we conclude $$\int_0^{\infty}|\lb u_t(\tau),\eta\rb|^2 d\tau < \infty.$$ We  apply the Barbalat Lemma to the function $\int^t|\langle u_t,\eta \rangle_{\Omega}|^2d\tau$ (as in \cite{Abhishek,ryz}), and conclude that  \begin{equation}\label{needed1} \lim_{t \to + \infty} k_0\lb u_t,\eta \rb_{\Omega}  = 0 \end{equation} for any $\eta \in C_0^{\infty}(\Omega)$. 

This distributional convergence $u_t$ to 0 in \eqref{needed1} and the strong convergence on subsequences $u_t(t_n) \rightarrow \hat w$ (from the existence of the compact attracting set)   imply that the strong limit $\hat w =0$. Since every sequence of times $t_n$ has a convergent subsequence such that $||u_t({t_{n_k}})||_0 \to 0$ as $k \to \infty$, we infer that  $\ds \lim_{t\to\infty} ||u_t(t)||_0^2 = 0,$ as desired for Point (3.) in Theorem \ref{convergenceprops}.}

 Now, as described above, from the existence of the attracting set for  the plate component we conclude strong convergence on a subsequence  of
\begin{equation}\label{one} \| u(t_{n_k}) -  \hat{u}  \|_{2,\Omega}  \rightarrow 0\end{equation} when $k \rightarrow \infty $. As for the lower order term, the following bound is clear
\begin{equation}\label{two}
\|u(t_n+\epsilon)-u(t_n)\|_{0,\Omega} \le \int_{t_n}^{t_n+\epsilon} \|u_t(\tau)\|_{0,\Omega}d\tau \le \epsilon\cdot \sup_{\tau \in [t_n,t_n+\epsilon]}\|u_t(\tau)\|_{0,\Omega}
\end{equation}
Thus \begin{align}
\sup_{\tau \in [-c,c]}\|u(t_n+\tau)-\hat u\|_{0,\Omega}\le& ~\sup_{\tau \in[-c,c]} \|u(t_n+\tau)-u(t_n)\|_{0,\Omega}+\|u(t_n)-\hat u\|_{0,\Omega}\\ \nonumber  & \to 0 ~\text{ for any fixed}~ c > 0
\end{align} along the subsequence $t_{n_k}$ by \eqref{one} and \eqref{two} above. More is true: by the interpolation inequality
$$\|u(t)\|_{2-\delta,\Omega} \le \|u(t)\|^{\delta/2}_{0,\Omega}\|u(t)\|^{1-\delta/2}_{2,\Omega},$$
and the boundedness of $\left\{\|u(t)\|_{2,\Omega}~:~t \ge 0 \right\}$ we see that \begin{align}
\sup_{\tau \in [-c,c]}\|u(t_n+\tau)-\hat u\|_{2-\delta,\Omega} \to 0 ~\text{ for any fixed}~ c > 0, \delta\in (0,2).
\end{align}
\end{proof}
\begin{remark} As seen in \cite{Abhishek}, the convergence \begin{align}
\sup_{\tau \in [-c,c]}\|u(t_n+\tau)-\hat u\|_{2-\delta,\Omega} \to 0 ~\text{ for any fixed}~ c > 0, ~\delta\in (0,2).
\end{align}
can be improved to $\delta=0$ by a contradiction argument, though this is not used here. \end{remark}

\subsection{Weak Convergence to Equilibria Set}

Now, to further characterize the flow limit point, we return to the formula in Theorem \ref{flowformula}. The key result is the identification of the limit as a {\it statinoary point. }
\begin{lemma}\label{characterize}
The weak limit point $(\hat \phi, \hat \psi) \in Y_{fl,\rho}$ in Theorem \ref{convergenceprops} above has $\hat \psi=0$ in the sense $L^2(K_{\rho})$ for any $\rho>0$.
\end{lemma}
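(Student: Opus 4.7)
The plan is to show that for every test function $\eta \in C_c^\infty(\mathrm{int}(K_\rho))$ one has $\langle \phi_t(t_{n_k}), \eta\rangle_{L^2(K_\rho)} \to 0$; combined with the weak convergence $\phi_t(t_{n_k}) \rightharpoonup \hat\psi$ in $L^2(K_\rho)$ from Theorem \ref{convergenceprops}(1.) and the density of test functions in $L^2(K_\rho)$, this forces $\hat\psi \equiv 0$ as an element of $L^2(K_\rho)$. I would first decompose $\phi = \phi^* + \phi^{**}$ as in Section \ref{flowsec}. By Huygens' principle (the estimate \eqref{starstable}), the free piece satisfies $\phi^*_t(t_{n_k}) \to 0$ strongly in $L^2(K_\rho)$ once $t_{n_k} \ge t_\rho$, so this component contributes nothing in the limit and the entire burden falls on $\phi^{**}$.

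For the forced piece, I would invoke the Kirchhoff-type representation \eqref{thisonenow}, which expresses $\phi^{**}_t(\xb, t_{n_k})$ as a sum of four integrals of $u_t^\dagger$ (in the last two terms differentiated by $\partial_{x_1}$ or $M_\theta$) evaluated at the shifted times $t_{n_k} - s$ with $s \in [x_3, t^*]$. Pairing against $\eta$ and integrating in $(x_1, x_2)$ before $(x_3, s, \theta)$, I would use the compact support of $\eta$ to integrate by parts and transfer every tangential derivative from $u_t$ onto $\eta$. Since the horizontal shifts $\kappa_1(\theta, s, x_3)$ and $\kappa_2(\theta, s, x_3)$ are independent of $(x_1, x_2)$, translation invariance of the $L^2(\mathbb R^2)$-norm makes these shifts harmless, and the singular weight $s/\sqrt{s^2 - x_3^2}$ appearing in the $M_\theta$-term is integrable on $(x_3, t^*) \times (0, \rho)$. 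The outcome is an estimate of the form
\begin{equation*}
\bigl|\langle \phi^{**}_t(t_{n_k}), \eta\rangle_{L^2(K_\rho)}\bigr| \,\le\, C(\rho, t^*)\, \|\eta\|_{H^1(K_\rho)} \cdot \sup_{s \in [0, t^*]} \|u_t(t_{n_k} - s)\|_{L^2(\Omega)}.
\end{equation*}
Because $t^*(\Omega, U)$ is a \emph{fixed} finite time and Theorem \ref{convergenceprops}(3.) gives $\|u_t(\tau)\|_0 \to 0$ as $\tau \to \infty$, the supremum on the right tends to zero, which concludes the argument for $\phi^{**}$ and hence for $\phi_t$.

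The main obstacle is that the Kirchhoff formula carries tangential derivatives of $u_t$, while the plate velocity is controlled only at the $L^2(\Omega)$-level, so no pointwise bound on $\partial_{x_1} u_t$ or $M_\theta u_t$ is available. The resolution is precisely the integration by parts against smooth compactly supported test functions, which is legitimate because only the \emph{weak} (distributional) convergence of $\phi_t$ to zero is required; a stronger pointwise statement would not be accessible from the plate estimates alone. I would stress that the localization to $K_\rho$ is essential: on all of $\mathbb R^3_+$ one could not shift the derivatives onto a globally-supported test function against which $\phi_t$ is paired, but on the local scale $L^2(K_\rho)$, the argument works uniformly in $\rho > 0$.
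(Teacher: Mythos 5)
Your proposal is correct and follows essentially the same route as the paper: decompose $\phi=\phi^*+\phi^{**}$, kill $\phi^*$ via Huygens' principle, then pair $\phi^{**}_t$ from the Kirchhoff representation \eqref{thisonenow} against a compactly supported test function and integrate by parts to shift the tangential derivatives $\partial_{x_1}$ and $M_\theta$ onto the test function, yielding a bound by $\sup_{s\in[0,t^*]}\|u_t(\cdot-s)\|_0$, which vanishes by Theorem \ref{convergenceprops}(3.). The extra remarks you make about translation invariance of the shifts and integrability of the weight $s/\sqrt{s^2-x_3^2}$ are sound justifications that the paper leaves implicit.
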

\begin{proof}
The flow solution coming from the {\em initial data}, denoted $(\phi^*(t),\phi_{t}^*(t))$, tends to zero in the local flow energy sense, owing to Huygen's principle. Thus from Point (1.) of Theorem \ref{convergenceprops} we have that $\phi^{**}_t(t_{n_k}) \rightharpoonup \hat \psi$ (since $\phi=\phi^*+\phi^{**}$). To identify $\hat \psi$ with zero, we differentiate the flow formula \eqref{phidef} to obtain \eqref{thisonenow} (holding in distribution).
 For a fixed $\rho >0$, we multiply the function $\phi_{t}^{**}(\xb,t)$ by a smooth function $\zeta \in C_0^{\infty}(K_{\rho})$ and integrate by parts in space---in \eqref{thisonenow} move $\partial_x$ onto $\zeta$ in the third term and  $M_{\theta}$ onto $\zeta$ in term four.
This results in the bound
\begin{equation}\label{whatwe}
|(\phi^{**}_{t},\zeta)_{L^2(K_{\rho})}| \le C(\rho)\sup_{\tau \in [0,t^*]}\|u_{t}(t-\tau)\|_{0,\Omega}\|\zeta\|_{1,K_{\rho}}
\end{equation}

From this point, we utilize the previously established fact that $u_{t}(t) \to 0$ in $L^2(\Omega)$ (Theorem \ref{convergenceprops}), and hence $(\phi_{t}^{**}(t),\zeta)_{L^2(K_{\rho})} \to 0$, so $\phi_t^{**}(t) \to 0$ in $\mathcal D'(K_{\rho})$. This gives that $\phi_{t}(t_{n_k}) \rightharpoonup 0$ in $L^2(K_{\rho})$, and we identify the limit point $\widehat \psi = 0$ in $L^2(K_{\rho})$.
\end{proof}

We must now show that the obtained weak limit $S_{t_n}(y_0) \rightharpoonup (\hat \phi, 0; \hat u,0)$ in $Y_{\rho}$ provides a weak solution to the stationary problem. The proof proceeds as in \cite{Abhishek}, but we sketch it for completeness.
\begin{lemma}\label{staticsols}
The pair $(\hat \phi,\hat u)$ in Theorem \ref{convergenceprops} is a weak solution to the stationary problem \eqref{static}.
\end{lemma}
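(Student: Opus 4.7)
The plan is to translate time along the convergent subsequence $t_{n_k}$, invoke the weak formulation of \eqref{flowplate} against separable test functions, and pass to the limit using the convergences already collected in Theorem \ref{convergenceprops} and Lemma \ref{characterize}. Define the shifted trajectory $(\phi^k(\xb,t),u^k(\xb,t))\defeq (\phi(\xb,t_{n_k}+t),u(\xb,t_{n_k}+t))$ on $t\in [0,T]$; by time-translation invariance each pair is again a weak solution of \eqref{flowplate}. Passing to a further subsequence (using the uniform $Y$-bound of Lemma \ref{globalbound} to extract a weak-$*$ limit and a diagonal argument over $\rho$), along $t_{n_k}$ we have: $u^k\to \hat u$ strongly in $C([0,T];H_0^{2-\delta}(\Omega))$ for any $\delta\in(0,2)$ by Theorem \ref{convergenceprops}(4); $u^k_t\to 0$ in $L^\infty(0,T;L^2(\Omega))$ by Theorem \ref{convergenceprops}(3); $u^k \stackrel{*}{\rightharpoonup} \hat u$ in $L^\infty(0,T;H_0^2(\Omega))$ with time-independent limit; $\phi^k \stackrel{*}{\rightharpoonup} \hat\phi$ in $L^\infty(0,T;H^1(K_\rho))$ for each $\rho>0$; $\phi^k_t \rightharpoonup 0$ in $L^2_{\mathrm{loc}}$ by Lemma \ref{characterize}; and, via Proposition \ref{littleest} and continuity of the trace map $H^1(K_\rho)\to L^2(\Omega)$, $\gamma_0[\phi^k] \stackrel{*}{\rightharpoonup} \gamma_0[\hat\phi]$ in $L^\infty(0,T;L^2(\Omega))$.

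Substitute test functions of the form $w(t)=\eta(t)\chi$, $\psi(t)=\eta(t)\zeta$ with $\chi\in H_0^2(\Omega)$, $\zeta\in W_1(\mathbb R^3_+)$ of compact support, and $\eta\in C_0^\infty(0,T)$. The boundary initial-condition terms in the weak formulation vanish because $w(0)=\psi(0)=0$. In the plate identity, $\int_0^T\langle u^k_t,\eta'\chi\rangle\,dt\to 0$ by strong decay of $u^k_t$; the biharmonic term converges via weak-$*$ convergence of $u^k$ in $H_0^2$; the nonlinearity $\langle f_V(u^k),\eta\chi\rangle$ converges to $\langle f_V(\hat u),\eta\chi\rangle$ because $f_V:H_0^{2-\delta}\to L^2$ is locally Lipschitz (Theorem \ref{nonest}) and $u^k\to\hat u$ strongly in $H_0^{2-\delta}$ uniformly in $t$; and the coupling integral $\int_0^T\langle \gamma_0[\phi^k],\eta'\chi+U\eta\partial_x\chi\rangle\,dt$ converges to $\langle \gamma_0[\hat\phi],\chi\rangle\int_0^T \eta'\,dt+U\langle \gamma_0[\hat\phi],\partial_x\chi\rangle\int_0^T\eta\,dt$, whose first summand vanishes since $\eta(0)=\eta(T)=0$. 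Collecting and dividing by $\int_0^T\eta\,dt\neq 0$ gives the first weak stationary identity of Theorem \ref{statictheorem} for arbitrary $\chi\in H_0^2(\Omega)$.

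An entirely analogous passage to the limit in the flow identity—using the weak-$*$ convergence of $\nabla\phi^k$, $\partial_x\phi^k$, the vanishing of $\phi^k_t$ in $L^2_{\mathrm{loc}}$, and the strong convergence of $\partial_x u^k$ in $L^2(\Omega)$—yields the second stationary identity. The chief technical point to monitor is that the boundary trace $\gamma_0[\phi^k]$ converges with the correct time-independent limit $\gamma_0[\hat\phi]$; this is delivered by the global Hardy-type trace bound of Proposition \ref{littleest} together with the local weak $H^1$ convergence of $\phi^k$, which together force the weak trace limit to agree with $\gamma_0[\hat\phi]$ and to be time-independent by the identification coming from Theorem \ref{convergenceprops}(1). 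Since $\eta\in C_0^\infty(0,T)$ is arbitrary and both test functions $\chi,\zeta$ range over dense classes, the pair $(\hat\phi,\hat u)$ satisfies both weak stationary identities defining \eqref{static}, so $(\hat\phi,\hat u)\in\mathcal N$ as claimed.
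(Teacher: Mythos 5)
Your proposal is correct and follows essentially the same route as the paper: test the flow-plate weak formulation with spatially localized (and, in your case, temporally localized) test functions, shift time along the convergent subsequence $t_{n_k}$, and pass to the limit using Theorem \ref{convergenceprops}, Lemma \ref{characterize}, and the local Lipschitz bound on $f_V$. The paper instead multiplies by time-independent test functions $\eta\in C_0^\infty(\Omega)$, $\mu\in C_0^\infty(\mathbb R^3_+)$, integrates from $t_n$ to $t_n+c$, and lets the temporal boundary terms vanish through $u_t(t)\to 0$; you achieve the same effect by taking $\eta\in C_0^\infty(0,T)$ so those terms never appear — a cosmetic, not structural, difference. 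One small inaccuracy: Theorem \ref{nonest} gives $f_V$ locally Lipschitz from $H_0^{2-\delta}(\Omega)$ into $H^{-\delta}(\Omega)$ (not into $L^2$); this is still sufficient since you only need to pair the difference $f_V(u^k)-f_V(\hat u)$ against a fixed test function $\eta\chi$ with $\chi\in H_0^2(\Omega)\subset H^\delta(\Omega)$. You should also make explicit the (easy but needed) point that the weak-$*$ limits of $u^k$ and $\phi^k$ in $L^\infty(0,T; \cdot)$ are indeed time-independent: this follows from $u^k(t)-u^k(0)=\int_0^t u^k_s\,ds$ together with $\|u_t(t)\|\to 0$ uniformly, and from the bound \eqref{whatwe} combined with $\|u_t\|\to 0$ for the flow; both you and the paper leave this implicit.
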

\begin{proof} (Sketch) Consider a strong solution to \eqref{flowplate}.
We begin by multiplying the system \eqref{flowplate} by $\eta \in C_0^{\infty}(\Omega)$ and $\mu \in C_0^{\infty}(\realsthree_+)$, and integrate over the respective domains. This yields
\begin{equation}\begin{cases}
\lb u_{tt},\eta\rb +\lb \Delta u, \Delta \eta\rb +\lb u_t,\eta\rb +\lb f(u),\eta\rb  = \lb p_0,\eta\rb -\lb \gamma_0[\phi_t+U\phi_x],\eta\rb  \\
\left(\phi_{tt},\mu \right)+U(\phi_{tx},\mu)+U(\phi_{xt},\mu)-U^2(\phi_{x},\mu_x)=-(\nabla \phi, \nabla \mu)+\lb u_t+Uu_x, \gamma_0[\mu]\rb .
\end{cases}
\end{equation}
Now, we consider the above relations evaluated on some points $t_n$ (identified as a subsequence for which the various convergences hold).  We then integrate in the time variable from $t_n$ to $t_n+c$. After temporal integration, the resulting identity can be justified on generalized solutions by the approximation definition of thereof. We then pass to limit in $n$, using Theorem \ref{convergenceprops}.

Limit passage on linear terms is clear, owing to the main convergence properties for the plate component in the Theorem \ref{convergenceprops}. The local Lipschitz property of $f_V$ allows us to pass with the limit on the nonlinear term (this is by now standard, \cite{springer}). We then arrive at the following relations:
\begin{equation}\begin{cases}
\lb \Delta \hat u, \Delta w\rb+\lb f_V(u),w\rb = \lb p_0,w\rb+U\lb \gamma_0[\hat \phi],w_x\rb \\
(\nabla \hat \phi, \nabla \mu)-U^2(\hat \phi_{x},\mu_x)=U\lb \hat u_x, \gamma_0[\mu]\rb.
\end{cases}
\end{equation}
This implies that our limit point $(\hat u,0; \hat \phi,0)$  for  $\left(\phi(t_n),\phi_t(t_n);u(t_n),u_t(t_n)\right)$ is a weak {\em stationary solution} to the flow-plate system \eqref{flowplate}. We have thus shown: any trajectory contains a sequence of times $t_n \to \infty$ such that the restricted trajectory  converges  {\it weakly} to a stationary solution.  \end{proof}
\begin{remark}
Knowing that the system is {\em gradient} would allow to reach this conclusion in a more straightforward way. However, this property is known only for the unperturbed problem $U=0$. 
\end{remark}

\noindent {\bf Summarizing Section \ref{techsec}}: Theorem \ref{convergenceprops} guarantees that for any $y_0 \in Y_{\rho_0}$ and for any sequence $t_n \to \infty$ and any $\rho>0$, there is weak convergence  in $Y_{\rho}$ along a subsequence $t_{n_k}$ of the trajectory $S_{t_{n_k}}(y_0)$ to some point $\hat y=( \hat \phi, \hat \psi; \hat u, \hat w) \in Y_{\rho}$. Moreover, point (3.) of Theorem \ref{convergenceprops} guarantees that $\hat w =0$. We then note that Lemma \ref{characterize} holds, and hence $\hat \psi=0$. With our weak limit point satisfying the weak form of the stationary problem in Lemma \ref{staticsols}, we conclude that we have obtained weak convergence to a weak solution of the stationary flow plate problem. We therefore have shown that any sequence $(u(t_n); \phi_t(t_n))$ with $t_n \to \infty$ contains a subsequence which converges to a stationary solution. A simple contradiction argument (see, e.g., \cite{ryz,springer,Abhishek})  gives that for any $\rho_0, \rho>0$ and any $y_0 \in Y_{\rho_0}$ we have $S_t(y_0) \rightharpoonup \mathcal N$ in the sense of $Y_{\rho}$. {\bf What remains to show to obtain the main result is the improvement of this convergence from weak to strong.}

\section{Microlocal Regularity of the hyperbolic  Neumann-Dirichlet map.}\label{micro}
The fundamental supporting result here---what finally allows us to overcome the issues encountered in previous stability analyses \cite{chuey,eliminating1,eliminating2}---is a sharp microlocal estimate for the Neumann lift. We note, for the first time, that the critical loss of Sobolev regularity in the so called characteristic  sector for the wave equation is compensated for by the plate dynamics. In this characteristic sector (defined below) we have $u_t ~\sim u_x \in H^1(\Omega)$. (Note: there is no loss of regularity for the Neumann wave equation in the hyperbolic and elliptic sectors.) 

		We will work with microlocally anisotropic spaces, as utilized heavily in the work of Tataru for trace estimates for the wave equation.
	Invoking the definition and notation from \cite{tataru}, we introduce the anisotropic space $X^s_\theta$. 
	Roughly, the notation $u\in X^s_\theta$ means that $u$ is $H^s$ in the characteristic sector (where time and space dual variables are comparable, given in \eqref{sectors} as $(c)$), but $u \in H^{s+\theta}$ in the other two sectors $(e)$ and $(h)$. 
	
	Let us introduce dual variables $\sigma \in \mathbb R,~ \eta\in \mathbb R^2 $ so that by Fourier-Laplace transformation
	\begin{align*} t \to &~ \tau=\xi + i\sigma,~~~
		(x,y) \to  i \boldsymbol \mu = i(\mu_1,\mu_2),
	\end{align*}
	with $\xi$ fixed and sufficiently large.
	In this section, we shall utilize the space $X^{1/2}_{-1/2}$ which consists of functions  $ h(x,y;t)$ with the properties: 
	\begin{itemize}
	\item ~$\ds e^{-\xi t} h\in L^2(\mathbb R^2 \times \mathbb R_+)$
	\item The Fourier-Laplace transform (defined explicitly below) $\hat{h}( \tau, \boldsymbol \mu)$    satisfies
	
	$$m( \sigma, \boldsymbol \mu )  \hat{h }(\tau, \boldsymbol \mu) \in L^2\big(\mathbb R_{+} ; H^{1/2} (\mathbb R^2)\big)$$  with  $m(\sigma,\boldsymbol \mu) = \hat {M}(t,x,y;t)$ for  ~$M(x,y;t)$  a zeroth order tangential  $\Psi$DO  on (the boundary) $\mathbb R^2$  with the property that $\ds \text{supp}\big\{m(\sigma, \boldsymbol \mu)\big\} \subset M_{C,c} \equiv  \big\{ c|\boldsymbol \mu| \leq |\sigma|\leq C |\boldsymbol \mu| \big\}$ 
	for some constants $c, C > 0 $. In addition, we assume that $m\equiv 1$ in $\ds M_{\frac{3C +c}{4}, \frac{4 c + C }{4} }$ \end{itemize}
	
	With that notation in place, we look at the following Neumann wave equation:
	\begin{equation}\label{flow-h}
		\begin{cases}
			\eta_{tt}=\Delta \eta & \text { in }~~ \realsthree_+ \times (0,T),\\
			\eta(0)=\eta_0;~~\eta_t(0)=\eta_1 & \text { in }~~ \realsthree_+\\
			\partial_z \eta = h^*(\xb,t)& \text{ on } ~~\{z=0\} \times (0,T),
		\end{cases} 
	\end{equation}
	{\bf Our goal in this section is to determine  the interior and the trace regularities of $\eta$ when the flow data $h^*$ has microlocally anisotropic regularity.} In particular, for $u_t$ as the plate component of a solution to \eqref{flowplate*}, we will have $u_t \in X^{1}_{-1}$. However, this provides an overcompensation for the Neumann loss (as discussed above); thus we only need consider Neumann data of the form
	 $h^*\in X^{1/2}_{-1/2}.$
We will show the following theorem: \begin{theorem}\label{mapprops} Any  weak solution to \eqref{flow-h}  satisfies the following a priori bound for $0 \le t \le T$:
	\begin{align}\label{trace*} \nonumber
		||\gamma_0 [\eta_t]||^2_{L^2(0,T; L^2(\R^2) } + ||\gamma_0 [\eta] ||^2_{L^2(0,T; H^{1}(\R^2))} +&	||\eta_t||^2_{L^2(0,T; L^2(\R^3_+) } + ||\eta ||^2_{L^2(0,T; H^{1}(\R^3_+))} \\
\le &	 C_T\big[  ||\eta(0)||^2_{H^1( \R^3_+)} + ||\eta_t(0)||^2+|| h^*||^2_{X^{1/2}_{-1/2}}\big].
\end{align}
\end{theorem}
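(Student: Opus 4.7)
The plan is to reduce Theorem \ref{mapprops} to the zero-initial-data case, solve that problem explicitly via a weighted Fourier-Laplace transform, and then perform a sector decomposition in which the characteristic sector (where the uniform Lopatinski condition fails) is handled by matching the anisotropic $H^{1/2}$ assumption on the boundary datum against a weighted lower bound on the Neumann symbol. Split $\eta=\eta_1+\eta_2$, where $\eta_1$ solves the wave equation with Cauchy data $(\eta(0),\eta_t(0))$ and homogeneous Neumann boundary condition, and $\eta_2$ solves the wave equation with trivial initial data and Neumann datum $h^*$. The first piece is handled by even reflection across $\{z=0\}$ and classical full-space hyperbolic energy estimates: $\eta_1 \in C([0,T];H^1(\realsthree_+))$ with $\gamma_0[\eta_1] \in L^2(0,T;H^{1/2}(\R^2))$ and corresponding bounds on $\eta_{1,t}$, dominated by $\|\eta(0)\|_{H^1}^2+\|\eta_t(0)\|^2$. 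All microlocal work is then confined to $\eta_2$.

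For $\eta_2$, apply the partial Fourier-Laplace transform in $(t,x,y)$ with weight $e^{-\xi t}$, $\xi>0$ fixed and large. For each frozen $(\tau,\boldsymbol\mu)$ the equation becomes $\partial_z^2\hat\eta_2 = \lambda^2 \hat\eta_2$ with $\lambda^2=\tau^2+|\boldsymbol\mu|^2$ and $\Re \lambda>0$; the Neumann condition gives the explicit representation $\hat\eta_2(z,\tau,\boldsymbol\mu) = -\lambda^{-1}\hat h^*(\tau,\boldsymbol\mu) e^{-\lambda z}$. By Plancherel, every norm appearing in \eqref{trace*} becomes a multiplier bound in $(\tau,\boldsymbol\mu)$: the traces equal $-\hat h^*/\lambda$ and $-\tau\hat h^*/\lambda$, and interior norms acquire additional $z$-integration which produces the factor $|\Re\lambda|^{-1/2}$ compatible with the symbol analysis to follow.

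Introduce a smooth partition $1=\chi_e+\chi_h+\chi_c$ adapted to the three sectors $(e)=\{|\sigma|<c|\boldsymbol\mu|\}$, $(h)=\{|\sigma|>C|\boldsymbol\mu|\}$, $(c)=\{c|\boldsymbol\mu|\le |\sigma|\le C|\boldsymbol\mu|\}$, with $\chi_c$ chosen so that it is dominated by the cutoff $m$ built into the definition of $X^{1/2}_{-1/2}$. In $(e)$ one has $|\lambda|\sim |\boldsymbol\mu|$ and both $\tau/\lambda$ and $|\boldsymbol\mu|/\lambda$ are bounded, so the trace and interior multipliers are controlled by $\|\chi_e \hat h^*\|_{L^2}$, itself dominated by $\|h^*\|_{X^{1/2}_{-1/2}}$. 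In $(h)$, away from the characteristic cone, $|\lambda|\sim |\sigma|$ and the identical reasoning applies. The entire subtlety is confined to $(c)$, where $\sigma^2\approx |\boldsymbol\mu|^2$ allows $\Re(\tau^2+|\boldsymbol\mu|^2)$ to vanish, making $|\lambda|^{-1}$ a priori unbounded; this is precisely the loss of Lopatinski.

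The key quantitative input, and the crux of the proof, is that with $\tau=\xi+i\sigma$ one has
$|\tau^2+|\boldsymbol\mu|^2|^2 = (\xi^2-\sigma^2+|\boldsymbol\mu|^2)^2 + 4\xi^2\sigma^2 \ge 4\xi^2\sigma^2,$
so on $(c)$ one gets the uniform lower bound $|\lambda|^{-1}\lesssim \xi^{-1/2}|\sigma|^{-1/2}\lesssim (|\sigma|+|\boldsymbol\mu|)^{-1/2}$. This gain of a half derivative in $|\lambda|^{-1}$ is exactly consumed by the microlocal $H^{1/2}$ regularity assumed on $h^*$ in $(c)$: $\bigl\| |\boldsymbol\mu|\,\chi_c \hat h^*/\lambda\bigr\|_{L^2}\lesssim \bigl\| |\boldsymbol\mu|^{1/2}\chi_c \hat h^*\bigr\|_{L^2}\lesssim \|h^*\|_{X^{1/2}_{-1/2}},$ with the analogous bound for $\tau\chi_c\hat h^*/\lambda$ and for the interior factors obtained by integrating $|e^{-\lambda z}|^2$ in $z$ against $|\Re\lambda|^{-1}$. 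Recombining the three sectors, inverting the Plancherel identity, and converting between the $\xi$-weighted $L^2(\R_+)$ norm and the ordinary $L^2(0,T)$ norm via the equivalence $e^{-\xi T}\|\cdot\|_{L^2(0,T)}\le \|e^{-\xi t}\cdot\|_{L^2(\R_+)}\le \|\cdot\|_{L^2(0,T)}$ yields \eqref{trace*}. The main obstacle is the characteristic-sector estimate: the lower bound $|\lambda|^2\ge 2\xi|\sigma|$ coming from the imaginary part of $\tau^2$ must be matched \emph{exactly} against the microlocal $H^{1/2}$ hypothesis encoded in $X^{1/2}_{-1/2}$, and a careful compatibility between the sector partition $\chi_c$ and the pseudodifferential cutoff $m$ ensures that no regularity is wasted in the passage through the characteristic cone.
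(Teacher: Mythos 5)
Your proposal is correct and follows essentially the same route as the paper: superposition into an initial-data piece and a boundary-data piece, a Fourier--Laplace transform reducing the latter to an explicit ODE in $z$ with symbol $\lambda=\sqrt{\tau^2+|\boldsymbol\mu|^2}$, a decomposition into elliptic, hyperbolic and characteristic sectors, and the key lower bound $|\tau^2+|\boldsymbol\mu|^2|^2\ge 4\xi^2\sigma^2$ (the paper's expression \eqref{s} and Lemma \ref{roots}) which produces the half-derivative gain in $|\lambda|^{-1}$ on the characteristic sector that is exactly canceled by the anisotropic $H^{1/2}$ weight in $X^{1/2}_{-1/2}$; the interior estimate via $\int_0^\infty|e^{-\lambda z}|^2\,dz\sim(\Re\lambda)^{-1}$ is likewise identical. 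The only departure from the paper is cosmetic: for the initial-data piece you invoke even reflection across $\{z=0\}$ where the paper cites Miyatake's hidden-regularity theorem, and you should be aware that the full-space energy bound alone gives $\gamma_0[\eta_1]\in C([0,T];H^{1/2}(\R^2))$ by the trace theorem but the estimate on $\gamma_0[\eta_{1,t}]$ still requires a hidden-regularity observation for the restriction of a full-space wave solution to an interior hyperplane, which is exactly the content Miyatake supplies.
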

\subsection{Proof of Theorem \ref{mapprops}}
To prove (\ref{trace*}), we critically exploit the  half-space structure of the problem in avoiding commutators and variable coefficients. 
	 We  apply superposition with respect to  initial and boundary data.
	\vskip.2cm
	\noindent{\bf Step 1}: Consider $h^* =0 $ and let $\eta$ be the corresponding response to the initial conditions.
	Applying \cite[Theorem 3]{miyatake}, we obtain
	\begin{multline}\label{ic}
		||\eta(t)||^2_{H^1(\R^3_+)} + ||\eta_t(t)||^2
		+ ||\gamma_0 [\eta_t]||^2_{L^2(0,T; H^{-1/2}(\R^2)) } + ||\gamma_0 [\eta] ||^2_{L^2(0,T; H^{1/2}(\R^2))} \\
		\leq C_T\left[ ||\eta(0)||^2_{H^1( \R^3_+)} + ||\eta_t(0)||^2\right].
	\end{multline}
	\noindent{\bf Step 2: } Consider  zero initial data and  arbitrary $ h^* \in X_{-1/2}^{1/2} $, and again let $\eta$ be the associated solution.
		Since  $\phi_0, \phi_1 =0 $ we then take the Fourier-Laplace transform with $\xi>0$ fixed:
	\begin{align*} t \to &~ \tau=\xi + i\sigma,~~~
		(x,y) \to  i \boldsymbol \mu = i(\mu_1,\mu_2),
	\end{align*}
denoting by $\widehat{\eta} = \widehat {\eta}(z, \boldsymbol \mu, \sigma)$ the Fourier-Laplace transform of $\eta$ in $x$, $y$ and $t$, i.e.
	$$
	\widehat {\eta}(z, \boldsymbol \mu, \tau) = \frac{1}{(2\pi )^2}\int_{{\R}^2} dx dy
	\int_{0} ^{+\infty} ~dt e^{-\tau t} \cdot e^{-i (x\mu_1+y\mu_2)}\cdot
	\eta (\mathbf x;t).
	$$
	This yields the equation \begin{equation*}
		\widehat{\eta}_{zz}=(|\boldsymbol \mu|^2+\xi^2-\sigma^2+2i\xi\sigma)\widehat{\eta},
	\end{equation*}
	with transformed boundary condition, ~
$\ds		\widehat{\eta}_z(z=0)=\widehat{h^*}(\boldsymbol \mu,\tau).$
	
	Solving the ODE in $z$ and choosing the decaying solution $z \to +\infty$,
	we have
	\begin{equation*} \widehat{\eta}(z, \boldsymbol \mu,\tau) =  - \frac{1}{\sqrt{s}}\widehat{h^*}( \boldsymbol \mu,\tau)\exp(-z\sqrt{s}),
	\end{equation*}
	with
	\begin{equation}\label{s}
		s \equiv | \boldsymbol   \mu|^2+\tau^2=\big( |   \boldsymbol \mu|^2-\sigma^2+\xi^2\big)+2i\xi \sigma;
	\end{equation}
	and $\sqrt{s}$ is chosen such that
	$ Re\sqrt{\tau^2 +| \boldsymbol \mu|^2}>0$ for $Re~ \tau = \xi >0$.
	On the boundary $z=0$ we have 
	\begin{equation}
		\widehat{\eta}(z=0, \boldsymbol \mu,\tau)=\frac{1}{\sqrt{s}}\widehat{h^*}( \boldsymbol \mu,\tau).
	\end{equation}
	Taking the time derivative amounts to premultiplying by $\tau=\xi + i \sigma$, and with a slight abuse of notation, we have 
	\begin{equation}
		\widehat{\eta}_t(z=0, \boldsymbol \mu,\tau)=\frac{\tau}{\sqrt{s}} \widehat{h^*}( \boldsymbol \mu,\tau),
		~~ \tau=\xi + i \sigma.
	\end{equation}
	Denoting the multiplier 
	\begin{equation}\label{mdef}
		\dfrac{\xi+i\sigma}{\sqrt{s}} \equiv m(\xi, \sigma, \boldsymbol \mu),
	\end{equation}
	we can infer the trace regularity of $\eta$ from $m(\xi,\sigma, \boldsymbol \mu)$:
	\begin{align}\label{needrts}
		|m(\xi,\sigma, \boldsymbol \mu)| = &~\Big| \dfrac{\xi+i\sigma}{\sqrt{s}}\Big|
		= \dfrac{\sqrt{\xi^2+\sigma^2}}{(|s|^2)^{1/4}} \\
		=&~\dfrac{\sqrt{\sigma^2+\xi^2}}{\big((| \boldsymbol \mu|^2-\sigma^2)^2+\xi^4+2\xi^2\sigma^2+2| \boldsymbol \mu|^2\xi^2\big)^{1/4}}\; .
	\end{align}
	WLOG, we take each of the arguments $\xi$, $\sigma$ and $| \boldsymbol \mu|$ to be positive and consider the partition of  the first quadrant of the $(| \boldsymbol \mu|,\sigma)$-plane into the following sectors: \begin{equation}\label{sectors}
		\begin{cases}(h)~~~ | \boldsymbol \mu| \le  \sigma/\sqrt{2}, & (hyperbolic)\\ (e)~~~ | \boldsymbol \mu| \ge \sqrt{2}\sigma, & (elliptic)\\ (c)~~~\sigma/\sqrt{2}< | \boldsymbol \mu| < \sqrt{2}\sigma & (characteristic)\end{cases}
	\end{equation}
	\subsubsection{Trace Regularity}
	Our goal in this section is to prove the trace component of \eqref{trace*}.	
	
	\begin{lemma}\label{roots}
		Let $s$ be as in \eqref{s}. Then, in sectors $(e)$ and $(h)$ we have the estimate
		\begin{equation}\label{sroot}
		\sqrt{|s|}\geq\frac{1}{2}|\xi|\left(1+\frac{| \boldsymbol \mu|^2}{\xi^2}\right)^{1/2}.
		\end{equation}
		and in sector $(c)$, we have the estimate
		\begin{equation}\label{sroot*}
			\sqrt{|s|}\geq|\xi|\left(1+\frac{| \boldsymbol \mu|^2}{\xi^2}\right)^{1/4}.
		\end{equation}
	\end{lemma}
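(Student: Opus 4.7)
The plan is to obtain a clean lower bound on $|s|$ directly from its modulus, by factoring the expression as a product of two nonnegative quantities and then treating each sector separately. From the definition $s=|\boldsymbol{\mu}|^2+\tau^2=(|\boldsymbol{\mu}|^2-\sigma^2+\xi^2)+2i\xi\sigma$, a direct computation of $|s|^2$ shows
\[
|s|^2=(|\boldsymbol{\mu}|^2-\sigma^2+\xi^2)^2+4\xi^2\sigma^2=\bigl((\sigma-|\boldsymbol{\mu}|)^2+\xi^2\bigr)\bigl((\sigma+|\boldsymbol{\mu}|)^2+\xi^2\bigr).
\]
This factorization is the main observation; it separates the contribution of the ``characteristic direction'' $\sigma\sim|\boldsymbol{\mu}|$ (where the first factor degenerates and only $\xi^2$ survives) from the always-present second factor which is comparable to $|\boldsymbol{\mu}|^2+\xi^2$.

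In the elliptic sector $(e)$, $|\boldsymbol{\mu}|\ge\sqrt{2}\,\sigma$ gives $|\boldsymbol{\mu}|-\sigma\ge(1-1/\sqrt{2})|\boldsymbol{\mu}|\ge|\boldsymbol{\mu}|/4$, so $(\sigma-|\boldsymbol{\mu}|)^2+\xi^2\ge|\boldsymbol{\mu}|^2/16+\xi^2\ge\tfrac{1}{16}(|\boldsymbol{\mu}|^2+\xi^2)$. Combining with $(\sigma+|\boldsymbol{\mu}|)^2+\xi^2\ge|\boldsymbol{\mu}|^2+\xi^2$ yields $|s|\ge\tfrac14(|\boldsymbol{\mu}|^2+\xi^2)$, whence
\[
\sqrt{|s|}\ge\tfrac{1}{2}\sqrt{\xi^2+|\boldsymbol{\mu}|^2}=\tfrac{1}{2}|\xi|\bigl(1+|\boldsymbol{\mu}|^2/\xi^2\bigr)^{1/2}.
\]
The hyperbolic case $(h)$ is analogous with the roles of $\sigma$ and $|\boldsymbol{\mu}|$ swapped: $\sigma\ge\sqrt{2}|\boldsymbol{\mu}|$ gives $\sigma-|\boldsymbol{\mu}|\ge\sigma/4\ge|\boldsymbol{\mu}|/(2\sqrt{2})$, so $(\sigma-|\boldsymbol{\mu}|)^2+\xi^2\gtrsim|\boldsymbol{\mu}|^2+\xi^2$, and the same computation produces \eqref{sroot}.

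The characteristic sector $(c)$ is the place where the first factor in the factorization is essentially lost, since $|\sigma-|\boldsymbol{\mu}||$ can vanish; here the only surviving lower bound for $(\sigma-|\boldsymbol{\mu}|)^2+\xi^2$ is $\xi^2$ itself, which is exactly the reason for the weaker exponent $1/4$ in \eqref{sroot*}. For the second factor, $\sigma/\sqrt{2}<|\boldsymbol{\mu}|<\sqrt{2}\sigma$ gives $\sigma+|\boldsymbol{\mu}|\ge(1+1/\sqrt{2})\max(\sigma,|\boldsymbol{\mu}|)$, which easily yields $(\sigma+|\boldsymbol{\mu}|)^2+\xi^2\ge|\boldsymbol{\mu}|^2+\xi^2$. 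Multiplying the two lower bounds produces $|s|^2\ge\xi^2(|\boldsymbol{\mu}|^2+\xi^2)$, hence
\[
\sqrt{|s|}\ge|\xi|^{1/2}(\xi^2+|\boldsymbol{\mu}|^2)^{1/4}=|\xi|\bigl(1+|\boldsymbol{\mu}|^2/\xi^2\bigr)^{1/4},
\]
which is \eqref{sroot*}. There is no real obstacle beyond spotting the factorization; the main conceptual point is that the anisotropic nature of the estimate (square root gain off the characteristic sector, fourth root gain on it) is precisely encoded in which of the two factors is responsible for the bound, and this is exactly what drives the anisotropic microlocal spaces $X^s_\theta$ to arise naturally in the next subsection.
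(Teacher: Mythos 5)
Your proof is correct. The key step—the factorization
\[
|s|^2=\bigl((\sigma-|\boldsymbol\mu|)^2+\xi^2\bigr)\bigl((\sigma+|\boldsymbol\mu|)^2+\xi^2\bigr),
\]
which one can see cleanly by factoring over $\mathbb{C}$: $|\boldsymbol\mu|^2+\tau^2=(|\boldsymbol\mu|+i\tau)(|\boldsymbol\mu|-i\tau)$ with $\tau=\xi+i\sigma$, and $||\boldsymbol\mu|\pm i\tau|^2=(|\boldsymbol\mu|\mp\sigma)^2+\xi^2$—checks out, and each of your sector-by-sector lower bounds on the two factors is valid (recalling the paper's WLOG that $\sigma,|\boldsymbol\mu|,\xi\ge 0$). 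This is a genuinely different route from the paper: the paper works directly with the real expansion $|s|^2=(|\boldsymbol\mu|^2-\sigma^2)^2+\xi^4+2\xi^2\sigma^2+2|\boldsymbol\mu|^2\xi^2$ and, depending on the sector, drops whichever nonnegative term degenerates before estimating what remains. Both arguments are elementary and yield the same constants, but yours is structurally more transparent: the factorization makes it immediate that off the characteristic sector the factor $(\sigma-|\boldsymbol\mu|)^2+\xi^2$ is itself comparable to $|\boldsymbol\mu|^2+\xi^2$ (giving the full square-root gain), whereas on the characteristic sector it collapses to $\xi^2$ and only the second factor contributes, which is exactly why the exponent drops to $1/4$. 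This also packages the ``why'' of the anisotropy into a single algebraic identity, which is a nice bonus. One could streamline the estimate on the second factor even further by noting that $(\sigma+|\boldsymbol\mu|)^2+\xi^2\ge|\boldsymbol\mu|^2+\xi^2$ holds trivially in every sector, which is all you actually use.
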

	\begin{proof}[Proof of Lemma \ref{roots}]

		Also, from \eqref{needrts}, we may write
		\begin{equation*}
			\sqrt{|s|}=\left((| \boldsymbol \mu|^2-\sigma^2)^2+\xi^4+2\xi^2\sigma^2+2| \boldsymbol \mu|^2\xi^2\right)^{1/4}.
		\end{equation*}
		Hence, in sectors $(e)$ and $(h)$, we obtain
		\begin{equation}
			\begin{split}
				\sqrt{|s|}\geq\left((| \boldsymbol \mu|^2-\sigma^2)^2+\xi^4+2| \boldsymbol \mu|^2\xi^2\right)^{1/4}&\geq \left(\frac{| \boldsymbol \mu|^4}{4}+\xi^4+2| \boldsymbol \mu|^2\xi^2\right)^{1/4}\\
				&\geq \left(\frac{| \boldsymbol \mu|^2}{2}+\xi^2\right)^{1/2}\\
				&\geq \frac{1}{2}|\xi|\left(1+\frac{| \boldsymbol \mu|^2}{\xi^2}\right)^{1/2}
			\end{split}
		\end{equation}
		In sector $(c)$, we have
		\begin{equation}
			\begin{split}
				\sqrt{|s|}\geq\left(\xi^4+2\xi^2\sigma^2+2| \boldsymbol \mu|^2\xi^2\right)^{1/4}&\geq|\xi|^{1/2}\left(\xi^2+2\sigma^2+2| \boldsymbol \mu|^2\right)^{1/4}\\
				&\geq |\xi|^{1/2}\left(\xi^2+3| \boldsymbol \mu|^2\right)^{1/4}\\
				&\geq |\xi|\left(1+\frac{| \boldsymbol \mu|^2}{\xi^2}\right)^{1/4}.
			\end{split}
		\end{equation}
		This proves \eqref{sroot}.
	\end{proof}
	
	\begin{lemma}\label{le:m}
		Let $m(\xi,\sigma, \boldsymbol \mu)$ be defined as in \eqref{mdef}. Then, in sectors $(e)$ and $(h)$(as in \eqref{sectors}), we have the estimate
		\begin{equation}\label{m-est0}
		|m(\xi,\sigma, \boldsymbol \mu)|\le 2, ~~\text{ for}~~ \xi\neq 0,
		\end{equation}
		and in sector $(c)$
		\begin{equation}\label{m-est}
			|m(\xi,\sigma, \boldsymbol \mu)|\le 2 \left[ 1+\frac{| \boldsymbol \mu|^2}{\xi^2}\right]^{1/4}~~~
			\text{ for}~~ \xi\neq 0.
		\end{equation}
	\end{lemma}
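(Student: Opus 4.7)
\textbf{Proof plan for Lemma \ref{le:m}.} The starting point is the explicit formula $|m(\xi,\sigma,\boldsymbol \mu)|^2 = (\xi^2+\sigma^2)/|s|$ from \eqref{needrts}, together with the expansion
\[
|s|^2 = \bigl(|\boldsymbol \mu|^2-\sigma^2\bigr)^2 + 2\xi^2\bigl(|\boldsymbol \mu|^2+\sigma^2\bigr) + \xi^4,
\]
obtained by squaring $s=(|\boldsymbol \mu|^2-\sigma^2+\xi^2)+2i\xi\sigma$. The whole proof reduces to sector-wise algebraic comparisons between $\xi^2+\sigma^2$ and $|s|$, using the defining inequalities in \eqref{sectors}.

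In sector $(e)$ we have $\sigma^2\le |\boldsymbol \mu|^2/2$, so from Lemma \ref{roots}
\[
\sqrt{|s|}\ge \tfrac12\bigl(\xi^2+|\boldsymbol \mu|^2\bigr)^{1/2}\ge \tfrac12\bigl(\xi^2+\sigma^2\bigr)^{1/2},
\]
which immediately yields $|m|^2\le 4$, hence $|m|\le 2$. Sector $(h)$ is the only case where Lemma \ref{roots} by itself is not strong enough (since $\sigma$ may dominate $|\boldsymbol \mu|$). Here I will note, using $|\boldsymbol \mu|^2\le \sigma^2/2$, that $(|\boldsymbol \mu|^2-\sigma^2)^2\ge \sigma^4/4$, so
\[
|s|^2\ge \tfrac14\sigma^4 + 2\xi^2\sigma^2 + \xi^4 \ge \tfrac14\bigl(\sigma^2+\xi^2\bigr)^2,
\]
and therefore $\sqrt{|s|}\ge \tfrac{1}{\sqrt{2}}(\sigma^2+\xi^2)^{1/2}$. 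This gives $|m|^2\le 2$, which is even stronger than the claim.

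In sector $(c)$, $\sigma^2<2|\boldsymbol \mu|^2$ and $|\boldsymbol \mu|^2<2\sigma^2$, so $|\boldsymbol \mu|^2+\sigma^2\ge \tfrac32|\boldsymbol \mu|^2$ while $|\boldsymbol \mu|^2-\sigma^2$ may vanish. Dropping the first term in the expansion of $|s|^2$ and using the $(c)$-inequalities,
\[
|s|^2\ge 2\xi^2\bigl(|\boldsymbol \mu|^2+\sigma^2\bigr)+\xi^4 \ge \xi^2\bigl(\xi^2+3|\boldsymbol \mu|^2\bigr)\ge \xi^4\Bigl(1+\tfrac{|\boldsymbol \mu|^2}{\xi^2}\Bigr),
\]
which, taking a fourth root, reproduces the $(c)$-bound of Lemma \ref{roots}. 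On the other hand
\[
\xi^2+\sigma^2\le \xi^2+2|\boldsymbol \mu|^2\le 2\xi^2\Bigl(1+\tfrac{|\boldsymbol \mu|^2}{\xi^2}\Bigr),
\]
so dividing gives $|m|^2\le 2\bigl(1+|\boldsymbol \mu|^2/\xi^2\bigr)^{1/2}$, hence $|m|\le \sqrt{2}\bigl(1+|\boldsymbol \mu|^2/\xi^2\bigr)^{1/4}\le 2\bigl(1+|\boldsymbol \mu|^2/\xi^2\bigr)^{1/4}$.

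\textbf{Main obstacle.} There is no real analytic obstacle; the estimate is entirely a quadrant-by-quadrant algebraic exercise once $|s|^2$ has been expanded. The only subtle point worth flagging is that in the hyperbolic sector $(h)$ the bound delivered by Lemma \ref{roots} (which is expressed in $|\boldsymbol \mu|^2+\xi^2$) is too weak to control $\xi^2+\sigma^2$, so one must re-do the lower bound on $|s|$ in terms of $\sigma$ instead of $|\boldsymbol \mu|$ using $(|\boldsymbol \mu|^2-\sigma^2)^2\ge \sigma^4/4$. This refinement, not a new idea, is the one additional ingredient beyond direct application of Lemma \ref{roots}.
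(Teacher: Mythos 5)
Your proof is correct and follows essentially the same sector-by-sector algebra as the paper: drop $2|\boldsymbol\mu|^2\xi^2$ from $|s|^2$ in $(e)$/$(h)$ and use $(|\boldsymbol\mu|^2-\sigma^2)^2\ge\sigma^4/4$, drop $(|\boldsymbol\mu|^2-\sigma^2)^2$ in $(c)$ and factor out $\xi^2$. The only cosmetic difference is that the paper handles $(e)$ and $(h)$ in a single unified computation (both sectors satisfy $(|\boldsymbol\mu|^2-\sigma^2)^2\ge\sigma^4/4$), whereas you invoke Lemma~\ref{roots} for $(e)$ and re-derive the $\sigma$-based bound for $(h)$; both are the same elementary estimate.
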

	\begin{proof}[Proof of Lemma \ref{le:m}]
		We consider the partition of  the first quadrant of the $(| \boldsymbol \mu|,\sigma)$-plane as in \eqref{sectors}.
		In cases $(e)$ and $(h)$ above, we
		can write
		\begin{equation*} 
			|m(\xi,\sigma, \boldsymbol \mu)| \le \dfrac{\sqrt{\sigma^2+\xi^2}}{\Big(\big| | \boldsymbol \mu|^2-\sigma^2\big|^2+\xi^4+2\xi^2\sigma^2\Big)^{1/4}}  \le \dfrac{\sqrt{\sigma^2+\xi^2}}{\Big(\sigma^4/4+\xi^4+2\xi^2\sigma^2\Big)^{1/4}} \le 2.
		\end{equation*}
		In case $(c)$ we have
		\begin{align*}
			|m(\xi,\sigma, \boldsymbol \mu)| \le &\dfrac{\sqrt{\sigma^2+\xi^2}}{\big(\xi^4+2\xi^2\sigma^2+2| \boldsymbol \mu|^2\xi^2\big)^{1/4}}
			\le \dfrac1{|\xi|^{1/2}}
			\dfrac{\sqrt{\sigma^2+\xi^2}}{\big(\xi^2+2\sigma^2+2|\boldsymbol \mu|^2\big)^{1/4}}
			\\
			\le &\dfrac1{|\xi|^{1/2}}
			\dfrac{\sqrt{\sigma^2+\xi^2}}{\big(\xi^2+3\sigma^2\big)^{1/4}}\le
			\dfrac1{|\xi|^{1/2}}
			\big(\sigma^2+\xi^2\big)^{1/4}\le \dfrac1{|\xi|^{1/2}}
			\big(2|\boldsymbol\mu|^2+\xi^2\big)^{1/4}.
		\end{align*}
		This implies the  estimate in \eqref{m-est}.
	\end{proof}
	We now complete the proof of the trace component in the following lemma.
	\begin{lemma}\label{tracereg}
		Let $\eta$ satisfy \eqref{flow-h} with zero initial data and $h^*\in X^{1/2}_{-1/2}$. Then, we have the following estimate
		\begin{equation}\label{trreg}
			||\gamma_0 [\eta_t]||^2_{L^2(0,T; L^2(\R^2) } + ||\gamma_0 [\eta] ||^2_{L^2(0,T; H^{1}(\R^2))}
			\leq C_T\big[ || h^*||^2_{X_{-1/2}^{1/2}}  \big].
		\end{equation}
	\end{lemma}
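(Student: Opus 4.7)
The plan is to reduce the trace regularity estimate \eqref{trreg} to a pointwise estimate on Fourier multipliers, using the explicit Fourier--Laplace representation already derived in the preceding step. Since $\eta_0=\eta_1=0$, we have on $\{z=0\}$
\[
\widehat{\gamma_0[\eta]}(\boldsymbol\mu,\tau)=\frac{1}{\sqrt{s}}\widehat{h^*}(\boldsymbol\mu,\tau),\qquad \widehat{\gamma_0[\eta_t]}(\boldsymbol\mu,\tau)=m(\xi,\sigma,\boldsymbol\mu)\widehat{h^*}(\boldsymbol\mu,\tau),
\]
and by the Plancherel identity (in $(x,y)$) together with the weighted Paley--Wiener identity in $t$ (with weight $e^{-2\xi t}$, $\xi$ fixed as in the definition of $X^{1/2}_{-1/2}$), the left-hand side of \eqref{trreg} is dominated by $e^{2\xi T}$ times
\[
\iint (1+|\boldsymbol\mu|^2)\, |s|^{-1}\, |\widehat{h^*}|^2\, d\sigma\, d\boldsymbol\mu
\;+\;\iint |m(\xi,\sigma,\boldsymbol\mu)|^{2}\, |\widehat{h^*}|^{2}\, d\sigma\, d\boldsymbol\mu.
\]
Thus the entire lemma reduces to bounding these two multiplier integrals by $\|h^*\|^2_{X^{1/2}_{-1/2}}$.

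The second step is to split the $(\sigma,\boldsymbol\mu)$-plane into the three sectors $(h)$, $(e)$, $(c)$ defined in \eqref{sectors}, since the multipliers $m$ and $|s|^{-1/2}$ behave very differently in each. In the hyperbolic and elliptic sectors, Lemma \ref{le:m} yields $|m|\le 2$, and Lemma \ref{roots} yields $|s|^{-1}\le 4\xi^{-2}(1+|\boldsymbol\mu|^{2}/\xi^{2})^{-1}$; combining the latter with the $H^1$ weight $(1+|\boldsymbol\mu|^2)$ one finds, for $\xi$ sufficiently large,
\[
(1+|\boldsymbol\mu|^2)|s|^{-1}\;\lesssim\;\frac{1+|\boldsymbol\mu|^2}{\xi^2+|\boldsymbol\mu|^2}\;\lesssim\;1
\quad\text{on }(e)\cup(h).
\]
Both multipliers are therefore bounded by constants, so the contributions of the $(e)$ and $(h)$ sectors are controlled by the global $L^2$ part of the $X^{1/2}_{-1/2}$ norm, namely $\|e^{-\xi t}h^*\|^2_{L^2(\R^2\times\R_+)}$.

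The decisive estimate is in the characteristic sector, where the Lopatinski condition fails and a fractional loss appears. Here Lemma \ref{le:m} gives $|m|^2\le 4(1+|\boldsymbol\mu|^2/\xi^2)^{1/2}$, while Lemma \ref{roots} gives $|s|^{-1}\le \xi^{-2}(1+|\boldsymbol\mu|^2/\xi^2)^{-1/2}$, and so in $(c)$
\[
(1+|\boldsymbol\mu|^2)|s|^{-1}+|m|^2 \;\le\; C_\xi\,(1+|\boldsymbol\mu|^2)^{1/2}.
\]
Multiplying by $|\widehat{h^*}|^2$ and integrating over the characteristic sector yields exactly an $H^{1/2}$-type Fourier weight against $|\widehat{h^*}|^2$, i.e.\ the quantity $\|m(\sigma,\boldsymbol\mu)\widehat{h^*}\|^2_{L^2(\R_+;H^{1/2}(\R^2))}$ built into the definition of $X^{1/2}_{-1/2}$. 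Summing the three sectors gives
\[
\iint\!\bigl[(1+|\boldsymbol\mu|^2)|s|^{-1}+|m|^2\bigr]|\widehat{h^*}|^2\,d\sigma\,d\boldsymbol\mu
\;\le\; C_\xi \|h^*\|^2_{X^{1/2}_{-1/2}},
\]
and inserting the $e^{2\xi T}$ factor from the Paley--Wiener step produces the constant $C_T$ in \eqref{trreg}.

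The main obstacle is the characteristic sector: this is precisely the regime where the hyperbolic Neumann-to-Dirichlet map loses a fraction of a derivative, so one cannot hope to bound the multipliers by a constant there. The whole point of working in $X^{1/2}_{-1/2}$ is that the $1/2$ derivative assigned microlocally to $h^*$ in the characteristic cone exactly matches the growth $(1+|\boldsymbol\mu|^2)^{1/4}$ of $|m|$ (and likewise the factor coming from $(1+|\boldsymbol\mu|^2)|s|^{-1}$) in that cone. Keeping track of this matching, and verifying that the cutoff $m(\sigma,\boldsymbol\mu)$ in the definition of $X^{1/2}_{-1/2}$ indeed covers the entire characteristic region used in \eqref{sectors} (by choosing the constants $c,C$ in $M_{C,c}$ compatibly with $1/\sqrt{2}$ and $\sqrt{2}$), is the one step that requires real care; the remainder is bookkeeping of Plancherel factors and the weight $e^{2\xi T}$.
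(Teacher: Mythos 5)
Your proposal is correct and follows essentially the same route as the paper: Fourier--Laplace representation of the trace, reduction via Parseval/Plancherel to pointwise multiplier estimates on $m$ and $(1+|\boldsymbol\mu|^2)|s|^{-1}$, decomposition into the hyperbolic, elliptic and characteristic sectors, and application of the sector-wise bounds from Lemmas \ref{roots} and \ref{le:m}, with the $(1+|\boldsymbol\mu|^2)^{1/2}$ growth in the characteristic cone absorbed exactly by the $H^{1/2}$ weight in the definition of $X^{1/2}_{-1/2}$. Your explicit accounting of the $e^{2\xi T}$ factor needed to pass from the $e^{-\xi t}$-weighted norm to the unweighted $L^2(0,T)$ norm is slightly more careful than the paper's presentation, but the argument is the same.
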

	\begin{proof}[Proof of Lemma \ref{tracereg}]
		We bound each term on the LHS of \eqref{trreg} separately.\\
		By the inverse Fourier-Laplace transform we have that
		\[
		e^{-\xi t}\eta_t (x,y,z=0,t) = \frac{1}{2\pi }\int_{{\R}^2} d\boldsymbol \mu
		\int_{-\infty} ^{\infty} d\sigma
		e^{i\sigma t} \cdot e^{i( x\mu_1+y\mu^2)}\cdot
		m(\xi,\sigma,\boldsymbol \mu)
		\widehat {h^*} (\boldsymbol \mu, \xi +i\sigma)
		\]
		and
		\[
		e^{-\xi t}\eta (x,y,z=0,t) = \frac{1}{2\pi }\int_{{\R}^2} d\boldsymbol \mu
		\int_{-\infty} ^{\infty} d\sigma
		e^{i\sigma t} \cdot e^{i( x \mu_1+y\mu^2)}\cdot
		\frac{1}{\sqrt{s}}
		\widehat {h^*} (\boldsymbol\mu, \xi +i\sigma).
		\]
		Thus by the Parseval  equality we obtain
		\begin{equation}\label{eqeq}
		n(\xi;\eta_t)\equiv\frac{1}{2\pi}\int_0^{+\infty}\|e^{-\xi t}\eta_t (z=0,t)\|^2_{L^2(\R^2)}dt =\int_{{\R}^2} d\boldsymbol\mu
		\int_{-\infty} ^{\infty} d\sigma
		|m(\xi,\sigma,\boldsymbol\mu)|^2
		|\widehat {h^*} (\boldsymbol\mu, \xi +i\sigma)|^2
		\end{equation}
		and
		\begin{equation}\label{eqeq1}
			p(\xi;\eta)\equiv\frac{1}{2\pi}\int_0^{+\infty}\|e^{-\xi t}\eta (z=0,t)\|^2_{H^{1}(\R^2)}dt =\int_{{\R}^2} d  \boldsymbol \mu
			\int_{-\infty} ^{\infty} d\sigma
			\dfrac{(1+| \boldsymbol \mu|^2)}{|s|}
			|\widehat {h^*} ( \boldsymbol \mu, \xi +i\sigma)|^2.
		\end{equation}
		Proving \eqref{trreg} is equivalent to showing the RHS in \eqref{eqeq} and \eqref{eqeq1} are bounded.
		
		We first look at \eqref{eqeq}.
		From Lemma \ref{le:m}, in sector $(c)$, we have
		\begin{equation}\label{sectb}
		|m(\xi,\sigma, \boldsymbol \mu)|^2\leq 4(1+\xi^{-2}| \boldsymbol \mu|^2)^{1/2}\leq 4(1+\xi^{-2})^{1/2}(1+| \boldsymbol \mu|^{2})^{1/2}.
		\end{equation}
		Using \eqref{sectb} and \eqref{m-est0} and splitting the the RHS of \eqref{eqeq} based on sectors $(e)$, $(h)$ and $(c)$, we obtain
		\begin{equation}\label{tr1}
			\begin{split}
				\int_{{\R}^2} d \boldsymbol \mu\int_{-\infty} ^{\infty} d\sigma|m(\xi,\sigma, \boldsymbol \mu)|^2&|\widehat {h^*} ( \boldsymbol \mu, \xi +i\sigma)|^2=8\int_{{\R}^2} d \boldsymbol \mu\int_{0\leq\sigma\leq \frac{| \boldsymbol \mu|}{\sqrt{2}}} d\sigma^2|\widehat {h^*} ( \boldsymbol \mu, \xi +i\sigma)|^2\\
				&+8(1+\xi^{-2})^{1/2}\int_{{\R}^2} d \boldsymbol \mu\int_{\frac{| \boldsymbol \mu|}{\sqrt{2}}<\sigma< \sqrt{2}| \boldsymbol \mu|} d\sigma(1+| \boldsymbol \mu|^{2})^{1/2}|\widehat {h^*} ( \boldsymbol \mu, \xi +i\sigma)|^2\\
				&+8\int_{{\R}^2} d \boldsymbol \mu\int_{\sigma\geq \sqrt{2}| \boldsymbol \mu|} d\sigma^2|\widehat {h^*} ( \boldsymbol \mu, \xi +i\sigma)|^2.
			\end{split}
		\end{equation}
		Now, using the fact that $h^*\in X^{1/2}_{-1/2}$, we see that the RHS of \eqref{tr1} is bounded.
		
		\par
		We now look at \eqref{eqeq1}
		From Lemma \ref{roots}, in sectors $(e)$ and $(h)$, we have
		\begin{equation}\label{sectab}
		\dfrac{(1+| \boldsymbol \mu|^2)}{|s|}\leq\dfrac{4(1+| \boldsymbol \mu|^2)}{|\xi|^2(1+\xi^{-2}| \boldsymbol \mu|^2)}\leq\dfrac{4(1+| \boldsymbol \mu|^2)}{\xi^2(1+\xi^{-2})(1+| \boldsymbol \mu|^2)}\leq \dfrac{4}{1+\xi^2},
		\end{equation}
		and, in sector $(c)$, we have
		\begin{equation}\label{sectc}
		\dfrac{(1+| \boldsymbol \mu|^2)}{|s|}\leq\dfrac{(1+| \boldsymbol \mu|^2)}{|\xi|^2(1+\xi^{-2}| \boldsymbol \mu|^2)^{1/2}}\leq\dfrac{(1+| \boldsymbol \mu|^2)}{\xi^2(1+\xi^{-2})^{1/2}(1+| \boldsymbol \mu|^2)^{1/2}}\leq \dfrac{(1+| \boldsymbol \mu|^2)^{1/2}}{\xi(1+\xi^2)^{1/2}},
		\end{equation}
		Splitting the RHS of \eqref{eqeq1} based on sectors $(e)$, $(h)$ and $(c)$ and then applying \eqref{sectab} and \eqref{sectc}
		\begin{equation}\label{tr2}
			\begin{split}
				\int_{{\R}^2} d \boldsymbol \mu\int_{-\infty} ^{\infty} d\sigma\dfrac{(1+| \boldsymbol \mu|^2)}{|s|}&|\widehat {h^*} ( \boldsymbol \mu, \xi +i\sigma)|^2=\dfrac{4}{1+\xi^2}\int_{{\R}^2} d \boldsymbol \mu\int_{0\leq\sigma\leq \frac{| \boldsymbol \mu|}{\sqrt{2}}} d\sigma^2|\widehat {h^*} ( \boldsymbol \mu, \xi +i\sigma)|^2\\
				&+\dfrac{1}{\xi(1+\xi^2)^{1/2}}\int_{{\R}^2} d \boldsymbol \mu\int_{\frac{| \boldsymbol \mu|}{\sqrt{2}}<\sigma< \sqrt{2}| \boldsymbol \mu|} d\sigma(1+| \boldsymbol \mu|^{2})^{1/2}|\widehat {h^*} ( \boldsymbol \mu, \xi +i\sigma)|^2\\
				&+\dfrac{4}{1+\xi^2}\int_{{\R}^2} d \boldsymbol \mu\int_{\sigma\geq \sqrt{2}| \boldsymbol \mu|} d\sigma^2|\widehat {h^*} ( \boldsymbol \mu, \xi +i\sigma)|^2.
			\end{split}
		\end{equation}
		Now, using the fact that $h^*\in X^{1/2}_{-1/2}$, we observe that the RHS of \eqref{tr2} is bounded.
	\end{proof}

	\subsection{Interior Regularity}
	 We now prove the interior portion of the bound in \eqref{trace*} in Theorem \ref{mapprops}. Completing this will finish the proof of Theorem \ref{mapprops}.
		\begin{lemma}\label{inreg}
		Let $\eta$ satisfy \eqref{flow-h} with zero initial data and  and $h^*\in X^{1/2}_{-1/2}$. Then, we have the following estimate
		\begin{equation}\label{intreg}
			||\eta_t||^2_{L^2(0,T; L^2(\R^3_+) } + ||\eta ||^2_{L^2(0,T; H^{1}(\R^3_+))}
			\leq C_T || h^*||^2_{X^{1/2}_{-1/2}}.
		\end{equation}
	\end{lemma}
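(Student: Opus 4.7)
The plan is to mirror the proof of Lemma \ref{tracereg}, with the essential new ingredient being a sector-by-sector lower bound for $\mathrm{Re}\sqrt{s}$ coming from the extra $z$-integration. As a first step, I would carry out the same reductions by superposition and Fourier--Laplace transform that were done in Step 2 of the proof of Theorem \ref{mapprops}; the homogeneous initial-data portion is already controlled by Miyatake's estimate in \eqref{ic}, so I can assume $\eta(0)=\eta_t(0)=0$. Multiplying by $e^{-\xi t}$ (which is bounded below on $[0,T]$ by a constant depending on $T$) and applying Parseval in $(x,y,t)$, the interior norms reduce to
\begin{equation*}
\int_0^T \|\eta(t)\|^2_{H^1(\mathbb{R}_+^3)} + \|\eta_t(t)\|^2_{L^2(\mathbb{R}_+^3)}\,dt \,\le\, C_T \iint d\boldsymbol\mu\,d\sigma \int_0^{\infty} \big[(1+|\boldsymbol\mu|^2)|\widehat{\eta}|^2 + |\widehat{\partial_z\eta}|^2 + |\widehat{\eta_t}|^2\big]\,dz.
\end{equation*}
Using the explicit representation $\widehat{\eta}(z,\boldsymbol\mu,\tau) = -\frac{1}{\sqrt{s}}\widehat{h^*}\,e^{-z\sqrt{s}}$ together with $\widehat{\partial_z\eta} = \widehat{h^*}\,e^{-z\sqrt{s}}$ and $\widehat{\eta_t} = \tau\widehat{\eta}$, the $z$-integrals evaluate to $\frac{1}{2\alpha}$ times an appropriate multiplier, where $\alpha \defeq \mathrm{Re}\sqrt{s}>0$.

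The remaining work is then to show the pointwise-in-$(\boldsymbol\mu,\sigma)$ bound
\begin{equation*}
\frac{1}{2\alpha}\left[ 1 + \frac{1+|\boldsymbol\mu|^2}{|s|} + \frac{|\tau|^2}{|s|}\right]|\widehat{h^*}|^2 \, \le\, C_\xi \cdot \Big[\text{local }X^{1/2}_{-1/2}\text{ weight}\Big]\cdot |\widehat{h^*}|^2
\end{equation*}
integrated over each sector $(e)$, $(h)$, $(c)$. The bracketed weight already matches, sector by sector, the bounds in Lemmas \ref{roots} and \ref{le:m}, so all that is new is control of $\frac{1}{2\alpha}$. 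For this, I would exploit the algebraic identity $2\alpha^2 = |s| + \mathrm{Re}\,s$ (equivalently $(2\alpha^2)(|s|-\mathrm{Re}\,s) = (\mathrm{Im}\,s)^2 = 4\xi^2\sigma^2$), giving
\begin{equation*}
\alpha \,\ge\, \min\!\Big\{\sqrt{|s|/2}\cdot \mathbf 1_{\{\mathrm{Re}\,s\ge 0\}},\; \frac{\xi\sigma}{\sqrt{|s|}}\Big\}.
\end{equation*}
In sector $(e)$ one has $\mathrm{Re}\,s \ge \tfrac12|\boldsymbol\mu|^2+\xi^2 >0$, so $\alpha\gtrsim |\boldsymbol\mu|$, absorbing the tangential weight trivially. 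In sector $(h)$ the second alternative gives $\alpha \gtrsim \xi$ at high $|\sigma|$ (since $|s|\sim \sigma^2$), while at low frequency $\alpha\gtrsim \xi$ comes directly from $\mathrm{Re}\,s\gtrsim \xi^2$. In sector $(c)$, where $\sigma\sim |\boldsymbol\mu|$, the worst case is $\mathrm{Re}\,s \approx 0$, in which case $|s|\approx 2\xi\sigma$ and $\alpha\gtrsim \sqrt{\xi|\boldsymbol\mu|}$.

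Substituting these lower bounds into the three integrals, the sectors $(e)$ and $(h)$ give contributions bounded by $C_\xi \iint |\widehat{h^*}|^2\,d\boldsymbol\mu\,d\sigma$, which is controlled by the $X^{1/2}_{-1/2}$ norm since that norm dominates the non-characteristic $L^2$ part. The delicate sector is $(c)$: there the combined weight behaves like $\frac{1}{\sqrt{\xi|\boldsymbol\mu|}}\big(1 + \frac{(1+|\boldsymbol\mu|^2)^{1/2}}{\xi(1+\xi^2)^{1/2}}\big)$, which for large $|\boldsymbol\mu|$ is $\sim \xi^{-3/2}|\boldsymbol\mu|^{1/2}$ — \emph{precisely} the $(1+|\boldsymbol\mu|^2)^{1/2}$ weight built into $X^{1/2}_{-1/2}$ in the characteristic sector. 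Thus the characteristic contribution is also absorbed, yielding \eqref{intreg}. The main obstacle I anticipate is the sector $(c)$ bookkeeping, where the weight on $|\widehat{h^*}|^2$ must be matched sharply to the $H^{1/2}$ weight provided by the space $X^{1/2}_{-1/2}$; any slack in the lower bound for $\alpha$ there would cost a derivative and break the estimate, so the identity $2\alpha^2(|s|-\mathrm{Re}\,s)=(\mathrm{Im}\,s)^2$ is what makes the whole argument (and indeed the ``microlocal compensation'' theme of the paper) work.
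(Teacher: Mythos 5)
Your argument is correct, and the setup---superposition against Miyatake's estimate, Fourier--Laplace transform, the explicit representation $\widehat\eta=-\widehat{h^*}\,e^{-z\sqrt{s}}/\sqrt s$, and Parseval leading to the $e^{-2z\,\mathrm{Re}\sqrt s}$ weight in $z$---is the same as in the paper. The genuinely different step is how you discharge the $z$-integral. You derive \emph{sector-dependent} lower bounds on $\alpha=\mathrm{Re}\sqrt s$ via the identity $2\alpha^2(|s|-\mathrm{Re}\,s)=(\mathrm{Im}\,s)^2=4\xi^2\sigma^2$, then re-verify the combined multiplier sector by sector. The paper instead uses a single \emph{uniform} lower bound $\mathrm{Re}\sqrt s\ge \xi/(2\sqrt 2)$, which makes $\int_0^\infty e^{-2z\,\mathrm{Re}\sqrt s}\,dz\le \sqrt2/\xi$ a constant, after which the surviving integrals coincide verbatim with \eqref{eqeq} and \eqref{eqeq1}; the interior bound then becomes an immediate corollary of the trace Lemma~\ref{tracereg} with no fresh multiplier analysis. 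Your route is longer but arguably safer: the paper's justification of its uniform bound in \eqref{exp} writes $\sqrt{|s|+\mathrm{Re}\,s}\ge\sqrt{|s|}$, which as stated needs $\mathrm{Re}\,s\ge0$ and fails in the hyperbolic sector once $\sigma>\xi\sqrt 2$; the uniform lower bound is nevertheless true, and your identity $2\alpha^2(|s|-\mathrm{Re}\,s)=4\xi^2\sigma^2$ is exactly what patches that step. Two minor imprecisions on your side: in sector $(h)$ the clause ``$\alpha\gtrsim\xi$ comes directly from $\mathrm{Re}\,s\gtrsim\xi^2$'' should be restricted to $|\sigma|\lesssim\xi$ (your second alternative then covers $|\sigma|\gtrsim\xi$); and in $(c)$ the combined weight $\sim\xi^{-3/2}|\boldsymbol\mu|^{1/2}$ is strictly \emph{smaller} than the permitted $(1+|\boldsymbol\mu|^2)^{1/2}$, so you have slack there rather than the sharp match you claim.
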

	\begin{proof}
		We bound each term on the LHS of \eqref{trreg} separately.
		By the inverse Fourier-Laplace transform we have that
		\[
		e^{-\xi t}\eta_t (x,y,z,t) = \frac{1}{2\pi }\int_{{\R}^2} d \boldsymbol \mu
		\int_{-\infty} ^{\infty} d\sigma
		e^{i\sigma t} \cdot e^{i( x  \mu_1+y  \mu_2)}\cdot
		m(\xi,\sigma, \boldsymbol \mu)\exp(-z\sqrt{s})
		\widehat {h^*} ( \boldsymbol \mu, \xi +i\sigma)
		\]
		and
		\[
		e^{-\xi t}\eta (x,y,z,t) = \frac{1}{2\pi }\int_{{\R}^2} d \boldsymbol \mu
		\int_{-\infty} ^{\infty} d\sigma
		e^{i\sigma t} \cdot e^{i( x  \mu_1+y  \mu_2)}\cdot
		\frac{\exp(-z\sqrt{s})}{\sqrt{s}}
		\widehat {h^*} ( \boldsymbol \mu, \xi +i\sigma).
		\]
		Thus by the Parseval  equality we obtain
		\begin{equation}\label{eq}
			\begin{split}
			n(\xi;\eta_t)&\equiv\frac{1}{2\pi}\int_0^{+\infty}\|e^{-\xi t}\eta_t (z,t)\|^2_{L^2(\R^3_+)}dt\\
			 &=\int_0^\infty dz\int_{{\R}^2} d \boldsymbol \mu
			\int_{-\infty} ^{\infty} d\sigma
			|m(\xi,\sigma, \boldsymbol \mu)|^2
			|\widehat {h^*} ( \boldsymbol \mu, \xi +i\sigma)|^2\exp(-2zRe(\sqrt{s}))
			\end{split}
		\end{equation}
		and
		\begin{equation}\label{eq1}
			\begin{split}
				p(\xi;\eta)&\equiv\frac{1}{2\pi}\int_0^{+\infty}\|e^{-\xi t}\eta (z=0,t)\|^2_{H^{1}(\R^3_+)}dt\\
				&=\int_0^\infty dz\int_{{\R}^2} d \boldsymbol \mu
				\int_{-\infty} ^{\infty} d\sigma
				\dfrac{(1+| \boldsymbol \mu|^2)}{|s|}
				|\widehat {h^*} ( \boldsymbol \mu, \xi +i\sigma)|^2\exp(-2zRe(\sqrt{s})).
			\end{split}
		\end{equation}
		Proving \eqref{intreg} is equivalent to showing the RHS in \eqref{eq} and \eqref{eq1} are bounded.\\
		Since $Re(\sqrt{s})>0$, from Lemma \ref{roots} we have
		\begin{equation}\label{exp}
			\sqrt{2}Re(\sqrt{s})=\sqrt{|s|+Re(s)}\geq\sqrt{|s|}\geq\frac{\xi}{2}.
		\end{equation}
		We first look at \eqref{eq}.
		Applying \eqref{exp} to \eqref{eq}, we obtain
		\begin{equation}\label{int1}
			\begin{split}
				\int_0^\infty dz\int_{{\R}^2}& d \boldsymbol \mu\int_{-\infty} ^{\infty} d\sigma|m(\xi,\sigma, \boldsymbol \mu)|^2|\widehat {h^*} ( \boldsymbol \mu, \xi +i\sigma)|^2\exp(-2zRe(\sqrt{s}))\\
				&\leq \int_0^\infty dz\exp\left(-\dfrac{z\xi}{\sqrt{2}}\right)\int_{{\R}^2} d \boldsymbol \mu\int_{-\infty} ^{\infty} d\sigma|m(\xi,\sigma, \boldsymbol \mu)|^2|\widehat {h^*} ( \boldsymbol \mu, \xi +i\sigma)|^2.
			\end{split}
		\end{equation}
		Now, using \eqref{tr1} and the fact that $h^*\in X^{1/2}_{-1/2}$, we see that the RHS of \eqref{int1} is bounded.
		
		\par
		Similarly, applying \eqref{exp} to \eqref{eq1}, we obtain
		\begin{equation}\label{int2}
			\begin{split}
				\int_0^\infty dz\int_{{\R}^2}& d \boldsymbol \mu\int_{-\infty} ^{\infty} d\sigma\dfrac{(1+| \boldsymbol \mu|^2)}{|s|}|\widehat {h^*} ( \boldsymbol \mu, \xi +i\sigma)|^2\exp(-2zRe(\sqrt{s}))\\
				&\leq \int_0^\infty dz\exp\left(-\dfrac{z\xi}{\sqrt{2}}\right)\int_{{\R}^2} d \boldsymbol \mu\int_{-\infty} ^{\infty} d\sigma\dfrac{(1+| \boldsymbol \mu|^2)}{|s|}|\widehat {h^*} ( \boldsymbol \mu, \xi +i\sigma)|^2.
			\end{split}
		\end{equation}
	Using \eqref{tr2} and the fact that $h^*\in X^{1/2}_{-1/2}$, we see that the RHS of \eqref{int2} is bounded appropriately.
	\end{proof}

\subsection{Change of Variables and Final Estimate}
We have shown the desired (interior and boundary) microlocal estimates for the wave equation in \eqref{flow-h}. But it is easy  to move back to the perturbed wave equation in \eqref{flowplate}$_(3)$--\eqref{flowplate}$_(5)$, of course with well-posedness in hand, with the flow component given by:
	\begin{equation}\label{flowplate*}\begin{cases}
			(\partial_t+U\partial_{x_3})^2\phi=\Delta \phi & \text { in }~~ \realsthree_+ \times (0,T),\\
			\phi(0)=\phi_0;~~\phi_t(0)=\phi_1 & \text { in }~~ \realsthree_+\\
			\partial_z \phi = h& \text{ on } ~~\{z=0\} \times (0,T).
		\end{cases}
	\end{equation} 
One can see that the function $\eta(\xb,t)=\phi(\xb +Ut e_1, t)\equiv \phi(x +Ut, y,z, t)$
possesses the same regularity properties and solves the problem
\begin{equation}\label{flow-h*}
\begin{cases}
\partial_t^2\eta=\Delta \eta & \text { in } \realsthree_+,\\
\eta(0)=\phi_0;~~\eta_t(0)=\phi_1+U\partial_x\phi_0, \\
\Dn \eta = h^{*}& \text{ on } \realstwo,
\end{cases}
\end{equation}
where  $ h^{*}(\xb,t)=h(\xb +Ut e_1, t)$ also has the same regularity properties as $h$, for instance in $L^2(0,T;  L^2(\R^2))$, $\forall\, T>0$.
Thus we obtained the desired result for the perturbed wave equation. 
\begin{theorem}\label{mapprops*} Any weak solution to \eqref{floweq*} with zero initial data  satisfies the following a priori bound:
	\begin{equation}\label{trace}\small
		||\gamma_0 [ \phi_t]||^2_{L^2(0,T; L^2(\R^2) } + ||\gamma_0 [ \phi] ||^2_{L^2(0,T; H^{1}(\R^2))} +	|| \phi_t||^2_{L^2(0,T; L^2(\R^3_+) } + || \phi ||^2_{L^2(0,T; H^{1}(\R^3_+))}
		\leq C_T || h||^2_{X^{1/2}_{-1/2}}.
\end{equation}
\end{theorem}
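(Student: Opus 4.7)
The plan is to reduce Theorem \ref{mapprops*} directly to the unperturbed Theorem \ref{mapprops} via the spacetime shear $\eta(x,y,z,t) := \phi(x+Ut,y,z,t)$ indicated in the excerpt. A direct computation gives $\partial_t \eta(\mathbf{x},t) = [(\partial_t + U\partial_x)\phi](x+Ut,y,z,t)$, so $\partial_t^2\eta = \Delta\eta$ by \eqref{flowplate*}; the Neumann trace becomes $\partial_z\eta|_{z=0}(\mathbf{x},t) = h(x+Ut,y,t) =: h^*(\mathbf{x},t)$; and the zero initial data hypothesis gives $\eta(0) = \phi_0 = 0$ and $\eta_t(0) = \phi_1 + U\partial_x\phi_0 = 0$. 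With this reduction in place, two verifications remain: (i) the anisotropic norm $\|h^*\|_{X^{1/2}_{-1/2}}$ is comparable to $\|h\|_{X^{1/2}_{-1/2}}$, and (ii) the estimates for $\eta$ transfer back to $\phi$.

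For (i), taking the Fourier--Laplace transform of $h^*$ yields $\widehat{h^*}(\boldsymbol{\mu},\xi + i\sigma) = \widehat h(\boldsymbol{\mu}, \xi + i(\sigma - U\mu_1))$, so the shear acts in the frequency plane as the $\boldsymbol{\mu}$-dependent shift $\sigma \mapsto \sigma - U\mu_1$. Since $|U\mu_1| \le |U|\,|\boldsymbol{\mu}|$ with $|U|<1$, the triangle inequality shows that the characteristic sector $\{c|\boldsymbol{\mu}|\le |\sigma|\le C|\boldsymbol{\mu}|\}$ is mapped inside $\{(c-|U|)|\boldsymbol{\mu}|\le|\sigma|\le(C+|U|)|\boldsymbol{\mu}|\}$, provided $c>|U|$, which we may arrange by narrowing the characteristic sector in the definition of $X^{1/2}_{-1/2}$. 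Analogous inclusions hold for the elliptic and hyperbolic sectors, and the cutoff multiplier $m$ can be correspondingly replaced by a comparable multiplier. This yields $\|h^*\|_{X^{1/2}_{-1/2}} \le C_U\|h\|_{X^{1/2}_{-1/2}}$.

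For (ii), the shear is a volume-preserving diffeomorphism of $\mathbb{R}^2_{x,y}\times[0,T]$ commuting with all spatial Sobolev norms, so $\|\eta\|_{L^2(0,T;H^1(\mathbb{R}^3_+))}=\|\phi\|_{L^2(0,T;H^1(\mathbb{R}^3_+))}$ and $\|\gamma_0\eta\|_{L^2(0,T;H^1(\mathbb{R}^2))}=\|\gamma_0\phi\|_{L^2(0,T;H^1(\mathbb{R}^2))}$. For the time derivatives, $\|\eta_t\|_{L^2(0,T;L^2)} = \|(\partial_t+U\partial_x)\phi\|_{L^2(0,T;L^2)}$, and the triangle inequality gives
\[
\|\phi_t\|_{L^2(0,T;L^2(\mathbb{R}^3_+))}\le \|\eta_t\|_{L^2(0,T;L^2(\mathbb{R}^3_+))}+|U|\,\|\phi\|_{L^2(0,T;H^1(\mathbb{R}^3_+))},
\]
and analogously for the boundary term $\gamma_0\phi_t$. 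Applying Theorem \ref{mapprops} to $\eta$ with data $h^*$ and combining with (i) then yields \eqref{trace}.

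The main technical obstacle is step (i): tracing the $\boldsymbol{\mu}$-dependent shift $\sigma\mapsto\sigma-U\mu_1$ through the microlocal cutoff defining $X^{1/2}_{-1/2}$ relies critically on $|U|<1$. If $|U|\ge 1$, the shift can carry points from the elliptic or hyperbolic sector into the characteristic sector, so the mere $L^2$ regularity of $h$ in the non-characteristic sectors would no longer suffice to control $h^*$ in the characteristic sector; this is the microlocal analogue of the standard breakdown of subsonic techniques at Mach $1$.
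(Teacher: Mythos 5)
Your proposal is correct and follows the same overall route as the paper: reduce \eqref{flowplate*} to the pure wave equation via the spacetime shear $\eta(\mathbf{x},t)=\phi(x+Ut,y,z,t)$, apply Theorem \ref{mapprops} to $\eta$, and transfer back. The paper's own proof consists of exactly this change of variables but then simply asserts that $h^*(\mathbf{x},t)=h(x+Ut,y,t)$ ``also has the same regularity properties as $h$, for instance in $L^2(0,T;L^2(\R^2))$,'' without spelling out why the \emph{anisotropic} norm $\|\cdot\|_{X^{1/2}_{-1/2}}$ is preserved. Your step (i) fills in precisely this gap: you correctly compute that the shear acts on the Fourier--Laplace side as the $\boldsymbol\mu$-dependent shift $\sigma\mapsto\sigma-U\mu_1$, observe that since $|U\mu_1|\le|U||\boldsymbol\mu|$ this shift moves the characteristic sector $c|\boldsymbol\mu|\le|\sigma|\le C|\boldsymbol\mu|$ into a slightly widened sector $(c-|U|)|\boldsymbol\mu|\le|\sigma|\le(C+|U|)|\boldsymbol\mu|$, and note that $|U|<1$ is what keeps the lower constant positive. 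This is a genuine addition to what the paper writes, and it isolates the correct microlocal reason that subsonicity enters at this step. One small item you gesture at but do not make fully explicit: choosing $c>|U|$ (the ``narrowed'' cone for $h^*$) and $c'=c-|U|$ (the ``widened'' cone for $h$) means the $X^{1/2}_{-1/2}$ norms on the two sides of your inequality are built with different sector constants; this is harmless, since the proofs of Lemmas \ref{roots}, \ref{le:m} and \ref{tracereg} go through for any admissible pair $0<c<1<C$ with constants depending only on $(c,C)$, so Theorem \ref{mapprops} holds uniformly over such choices, and the downstream estimates in Section \ref{convmicro} are similarly robust. Your step (ii) (transfer of the interior and trace norms from $\eta$ back to $\phi$, picking up a harmless $|U|\|\phi\|_{H^1}$ term from the chain rule $\phi_t=\eta_t(\cdot-Ut\mathbf e_1,\cdot)-U\phi_x$) is straightforward and correct, and not circular since $\|\eta\|_{L^2(H^1)}=\|\phi\|_{L^2(H^1)}$ is controlled first.
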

\noindent This theorem will be applied to the flow-plate solution taking $h=[u_t+Uu_x]_{\text{ext}}$ and waiting a sufficiently long time (with respect to the support of the initial data characterized by $\rho>0$). 
\begin{remark} We note that, from the point of view of stability (long-time behavior), the above bound may not be directly useful, owing to the constant $C_T$ appearing on the RHS.  We will only need to apply this estimate on time-translated intervals of a uniform size, i.e., $[t_n-a,t_n+a]$ for some fixed $a>0$. This is a critical point in the analysis here. \end{remark}

\section{Improving Convergence: Weak to Strong}\label{weaktostrong}
This is the final  portion of the proof of the main result, Theorem \ref{regresult}. With the technical preliminaries established (Section \ref{techsec}), and the improved microlocal mapping properties of the Neumann-Dirichlet  mapping (Section \ref{micro}), we now boost weak convergence in Lemma \ref{staticsols} to a strong convergence. We  outline the strategy here:
\begin{itemize}
\item We consider a given sequence of times $t_n \to \infty$, for which we have established a subsequential weak limit $(\hat\phi, 0; \hat u, 0)$ that is a weak solution to the stationary problem. We consider the wave component of the PDE system on its own, where we take the difference variables $\psi = \phi-\hat \phi$ and $v=u-\hat u$. Since the wave equation itself is linear, we may apply our microlocal regularity results for the resulting $(\psi,v)$ system. 
 The goal is to show that ~$(\psi(t_n),v(t_n)) \to (0,0)$ strongly in $Y_{\rho}$ (perhaps on a further subsequence).
\item We consider the Neumann mapping bound on time intervals of the form $[-c+t_n,t_n+c]$, where $c>0$ is arbitrary; we then apply the microlocal result. Since these ``sliding'' intervals are all of the same length, we can invoke the semigroup property for tight control of constants that depend on $T$, which is to say, only the length of this interval matters.
\item We will then obtain that $\psi$ goes to zero (via convergence of the plate dynamics) in the sense of $L^2\big([-c+t_n,t_n+c];Y_{\rho}\big)$. We must convert this information to point-wise convergence along the sequence $t_{n_k}$ for the flow. We finally show a ``converging together'' lemma that gives $E(t_n)$,~ $E(t_n+c)$ converge to the same value with $c$ arbitrary; this allows us to deduce the required point-wise information. 
\end{itemize}

\subsection{Convergence Through the Microlocal Regularity}\label{convmicro} Let $\{t_{n_k}\}$ be as in Lemma \ref{staticsols}, with $(u,\phi)$ satisfying \eqref{flowplate} and $(\hat{u},\hat{\phi})$ the associated solution the stationary problem to which it convergence along a given subsequence of times $t_{n_k}$. By linearity of the flow equation in \eqref{floweq*}, the variable $\psi=\phi-\hat{\phi}$ satisfies the same flow equation with the boundary data given by $$h=[v_t+Uv_x]_{\text{ext}}=[u_t+U(u-\hat u)_x]_{\text{ext}}.$$
		 Our ultimate goal is to show for any $\rho>0$
			\begin{equation}\label{req}
		||\phi(t_{n_k})-\hat{\phi}||_{W_1( K_{\rho} )}^2+||\phi_t(t_{n_k})||_{L^2( K_{\rho} )}^2\to 0,
		\end{equation}		
		We arrive at this through several steps, as outlined in the previous section.
		
		 We define, for a fixed $a>0$, time translates of the solutions along $t_n$. Consider the sequences $\{f_n\}$ and $\{g_n\}$ as:\\
		\begin{tabularx}{\textwidth}{XX}
			{\begin{align*}
			f_k(t)=&\begin{cases}
					v_{\text{ext}}(t_{n_k}+t),& ~t\in[-a,a]\\
					0,& ~\text{otherwise}
				\end{cases}
			\end{align*}} 
 			&
			{\begin{align*}
				g_k(t)=&\begin{cases}
				[v_t]_{\text{ext}}(t_{n_k}+t),&~ t\in[-a,a]\\
				0,& ~\text{otherwise}
			\end{cases}
			\end{align*}} 
		\end{tabularx}
and sequences $\{p_n\}$ and $\{q_n\}$ as\\
\noindent	
	\begin{tabularx}{\textwidth}{XX}
		{\begin{align*}
				p_k(t)=&\begin{cases}
					\psi(t_{n_k}+t),& t\in[-a,a]\\
					0,&~ \text{otherwise}
				\end{cases}
		\end{align*}} 
		&
		{\begin{align*}
				q_k(t)=&\begin{cases}
					\psi_t(t_{n_k}+t),& ~ t\in[-a,a]\\
					0,&~\text{otherwise}.
				\end{cases}
		\end{align*}} 
	\end{tabularx}

		Now, we wait a sufficient time so as to consider the flow equation with zero initial flow data on $K_{\rho}$ (Huygen's Principle). Applying the result on the Neumann lift (with zero flow data) in Theorem \ref{mapprops*}, for any fixed $a>0$, we obtain:
		\begin{equation}\label{mres}
			\begin{split}
				||\psi_t||^2_{L^2(t_{n_k}-a,t_{n_k}+a; L^2( K_{\rho} )) } + ||\nabla \psi ||^2_{L^2(t_{n_k}-a,t_{n_k}+a; L^2( K_{\rho} ))}&=||q_k||^2_{L^2(\mathbb{R}_+; L^2( K_{\rho} ) } + ||\nabla p_k||^2_{L^2(\mathbb{R}_+; L^2( K_{\rho} ))}\\
				&\leq C_T|| g_k +U{\left(f _k\right)}_{x}||^2_{X^{1/2}_{-1/2}}\\
				&\leq C_T\left[ || g_k ||^2_{X^{1/2}_{-1/2}}+U\|{\left(f _k\right)}_{x}||^2_{X^{1/2}_{-1/2}}  \right]
			\end{split}
		\end{equation}
		where above $T=2a$ (independent of $k$).
		
	We now look at the terms on the RHS. For simplicity, we use the notation $\chi(k)\defeq\chi_{[t_{n_k}-a,t_{n_k}+a]}$ for the characteristic function on the set $[t_{n_k}-a,t_{n_k}+a]$. Using the definition of the space $X^{1/2}_{-1/2}$ and Parseval's equality, we obtain
	\begin{equation}\label{mres1}
		\begin{split}
			|| g_k||^2_{X^{1/2}_{-1/2}}&=\int_{{\R}^2} d\mu\int_{0\leq|\sigma|\leq \frac{|\mu|}{\sqrt{2}}} d\sigma|\widehat{g_k}(\mu,\sigma)|^2+\int_{{\R}^2} d\mu\int_{\frac{|\mu|}{\sqrt{2}}<|\sigma|< \sqrt{2}|\mu|} d\sigma(1+|\mu|^{2})^{1/2}|\widehat{g_k}(\mu,\sigma)|^2\\
			&+\int_{{\R}^2} d\mu\int_{|\sigma|\geq \sqrt{2}|\mu|} d\sigma|\widehat{g_k}(\mu,\sigma)|^2\\
			&\leq 2\int_{{\R}^2} d\mu\int_{\mathbb{R}} d\sigma|\widehat{g_k}(\mu,\sigma)|^2+\int_{{\R}^2} d\mu\int_{\frac{|\mu|}{\sqrt{2}}<|\sigma|< \sqrt{2}|\mu|} d\sigma(1+|\mu|^{2})^{1/2}|\widehat{\chi(k)[v_t]_{\text{ext}} }(\mu,\sigma)|^2\\
			&\leq 2||g_k||^2_{L^2(\mathbb{R}_+; L^2(\mathbb R^2 )) }+\xi\int_{{\R}^2} d\mu\int_{\frac{|\mu|}{\sqrt{2}}<|\sigma|< \sqrt{2}|\mu|} d\sigma\sqrt{2}|\mu|(1+|\mu|^{2})^{1/2}|\widehat{\chi(k)v_{\text{ext}} }(\mu,\sigma)|^2\\
			&\leq 2||[u_t]_{\text{ext}} ||^2_{L^2(t_{n_k}-a,t_{n_k}+a; L^2( \mathbb R^2) )}+\xi\int_{{\R}^2} d\mu\int_{\mathbb{R}} d\sigma\sqrt{2}|\mu|(1+|\mu|^{2})^{1/2}|\widehat{\chi(k)v_{\text{ext}} }(\mu,\sigma)|^2\\
			&\leq 2||[u_t]_{\text{ext}} ||^2_{L^2(t_{n_k}-a,t_{n_k}+a; L^2( \mathbb R^2 )) }+\xi\|v_{\text{ext}} \|_{L^2(t_{n_k}-a,t_{n_k}+a; H^{3/2}( L^2(\mathbb R^2) )}
		\end{split}
	\end{equation}
From the convergence properties of $(u(t_{n_k}),u_t(t_{n_k}))$ of in Theorem \ref{convergenceprops} and the definition of $v$ variable , we see that the RHS of \eqref{mres1} approaches zero as $k$ approaches infinity. Also, from Theorem \ref{mapprops*}, and the fact that $H^{3/2}(\Omega) \hookrightarrow X^{1/2}_{-1/2}$, we see that
\begin{equation}\label{mres2}
	\|{\left(f _k\right)}_{x}||^2_{X^{1/2}_{-1/2}}\leq C_T\|v_{\text{ext}} ||^2_{L^2(t_{n_k}-a,t_{n_k}+a; H^{3/2}(\R^2) }\to 0,~ k\to \infty.
\end{equation}
Hence,
\begin{equation}\label{psilim}
	||\psi_t||^2_{L^2(t_{n_k}-a,t_{n_k}+a; L^2(K_{\rho}) } + ||\nabla \psi ||^2_{L^2(t_{n_k}-a,t_{n_k}+a; L^2(K_{\rho}))}\to 0, ~ k\to \infty.
\end{equation}
This establishes the desired convergence along time translated intervals of size $2a$ along $t_{n_k}$.

\subsection{Converging Together}\label{together} We must convert our $L^2$ type convergence on intervals to pointwise convergence along $t_n$. Note that this is precisely the type of information carried in the energy identity. We now look only at the energy for the flow equation evaluated on the difference trajectory $(\psi=\phi-\tilde\phi,v=u-\tilde u)$.
 We also recall that for generalized solutions, the energy $E^{sub}_{fl} \in C([0,T];\mathbb R_{\ge 0})$ along trajectories.
\begin{equation}\label{eid}
	E_{fl}^{sub}(t_2)=E_{fl}^{sub}(t_1)+\int_{t_1}^{t_2}\langle v_t(s)+Uv_x(s),\psi_t(s)\rangle_{\Omega} ds,
\end{equation}
where we recall ~$E_{fl}^{sub}(t)=\|\psi_{t}(t)\|^2+\|\nabla\psi(t)\|^2-U^2\|\psi_x(t)\|^2$~ is evaluated on the difference $\psi=\phi-\tilde \phi$. Using the energy identity \eqref{eid} on ~$[t_{n_k}-a,t_{n_k}+a]$, Parseval's, and Theorem \eqref{tracereg}, we see that
\begin{equation}\label{eninf}
	\begin{split}
		0&\leq \sup_{\tau\in[-a,a]}\left|E_{fl}^{sub}(t_{n_k}+\tau)-E_{fl}^{sub}(t_{n_k})\right|\\	
		&\leq\int_{t_{n_k}-a}^{t_{n_k}+a}|\langle v_t(s)+Uv_x(s),\psi_t(s)\rangle_{\Omega}| ds\\
		&\leq||\gamma_0 [\psi_t]||_{L^2(t_{n_k}-a,t_{n_k}+a; L^2(\R^2)) }||v_t+Uv_x||_{L^2(t_{n_k}-a,t_{n_k}+a; L^2(\R^2)) }\\
		&\leq \|g_k+Uf_{k,x}\|_{X_{-1/2}^{1/2}}^2\\
		&\leq \left(\|g_k\|_{X_{-1/2}^{1/2}}+U\|f_{k,x}\|_{X_{-1/2}^{1/2}}\right)^2\to 0,
	\end{split}
\end{equation}
where in the last line, we invoke \eqref{mres1} and \eqref{mres2}.
From Theorem \ref{convergenceprops}, we know that $S_{t_{n_k}}(y_0)$ converges weakly in $Y_{\rho}$, thus the sequence $E_{fl}^{sub}(t_{n_k})$ is  bounded. Possibly passing to a further subsequence and relabelling, we have a subsequential limit point (of real numbers)
\begin{equation}\label{elim}
	E_{fl}^{sub}(\tau+t_{n_k})\to L,\quad \text{ for some } L\geq 0,
\end{equation}
where the above convergence and limit point are uniform for $\tau \in [-a,a]$.

We now show that $L=0$ via contradiction, to obtain our final desired conclusion, \eqref{req}. Suppose, to the contrary, that $L\neq 0$. From \eqref{eninf} and \eqref{elim}, we may write $\displaystyle\inf_{\tau\in[-a,a]}E_{fl}^{sub}(t_{n_k}+\tau)\to L$ and hence for all $k$ large enough
\begin{equation}
	0<L/2<\inf_{\tau\in[-a,a]}E_{fl}(t_{n_k}+\tau).
\end{equation}
But this contradicts \eqref{psilim} since, from the definition of $E_{fl}^{sub}$, we have
\begin{equation*}
	2a\left(\inf_{\tau\in[-a,a]}E_{fl}^{sub}(t_{n_k}+\tau)\right)\leq ||\psi_t||^2_{L^2(t_{n_k}-a,t_{n_k}+a; L^2(\R^3_+) } + ||\psi ||^2_{L^2(t_{n_k}-a,t_{n_k}+a; W_{1}(K_{\rho})}\to 0, ~ k\to \infty.
\end{equation*}
Hence $L$ must be zero, and we obtain \eqref{req}.\\

Thus, we have improved the convergence of $S_{t_{n_k}}(y_0) \to (\hat \phi, 0; \hat u, 0) \in Y_{\rho}$, i.e., the limit in Part 1 of Theorem \ref{convergenceprops} is strong. By virtue of Theorem \ref{convergenceprops} and Lemma \ref{staticsols}, we have shown that, that given any sequence of times $\{t_n\}$ going to infinity, there exists a subsequence $\{t_{n_k}\}$ and an associated point $(\tilde{u},\tilde{\phi})\in\mathcal{N}$ for which
\begin{equation*}
	||\Delta[u(t_{n_k})-\tilde{u}]||_{2}^2+ ||u_t(t_{n_k})||_{0}^2+||\nabla [\phi(t_{n_k})- \tilde{\phi}]||_{L^2( K_{\rho} )}^2+||\phi_t(t_{n_k})||_{L^2( K_{\rho}) }\to 0.
\end{equation*}
We have thus obtained the statement of the main result, Theorem \ref{regresult}.

\section{Acknowledgements}  The authors wish to acknowledge and express appreciation to the National Science Foundation. The second author was partially supported by NSF-DMS 1713506. The third author was partially supported by NSF-DMS 1907620.

\section{Appendix}
The goal of this brief appendix is to discuss the features of our model that are essential for the main result, and also to demonstrate some ways in which our main result is sharp. 

\subsection{Essential Features of \eqref{flowplate}---Discussion}
First, we note that the presence of the flow $U \neq 0$ introduces the interactive energy $E_{int}$, which, although conserved, may not be positive; hence it must be controlled.  The presence of the flow (and also the functions $p_0$ and $F_0$) are the main features which give rise to the plate attractor, and the possibility of non-trivial stationary states. The von Karman nonlinearity is ``good'' precisely because it is ``strong enough'' to ensure boundedness of trajectories for all $U \in \mathbb R$. Thus, the removal of the physical plate nonlinearity would remove hope of producing the main result, unless strict smallness conditions were placed on $|U|$ as well as the forcing functions $p_0$ and $F_0$ (in appropriate norms).  Indeed, the requirement of $|U|<1$ is essential for our purposes here, and of course in line with the engineering motivation for this paper. Although the flow contributes $E_{int}$ as mentioned above, we do have a conservative system. With $|U|>1$, we do not have a good energy balance/identity, and energy-building temporal integrals appear on the RHS \cite{supersonic}. 

In the discussion below, we informally address two simplified problems that provide some insight into our result here, and, in some sense, demonstrate that our result is sharp with respect to the mode of convergence. 
We will discuss two simplified scenarios where $U=0$ and the plate is taken to be linear (regime of small, but perhaps rapid, deflections). In these cases, our mathematical flow-plate system becomes a true {\em acoustic problem} \cite{springer,graber}. In other words, we consider a true wave equation on the 3D space, with a damped linear plate embedded in a portion of its boundary. 

We first consider the half-space problem, as presented above in \eqref{flowplate} but taking $U=0$. We interpret our results from the previous sections to obtain a result. We then ask the questions of uniformity and rate of decay in this context. In the second portion of the Appendix, we restrict our attention to the bounded {\em acoustic chamber} \cite{springer} (and references therein). In both cases, we do not obtain uniformity of decay, for reasons explained below.

\subsection{$U=0$ Half-Space Problem}
Consider the linear acoustic problem on the 3D  half-space. This model is obtained by simply letting $U=0$ and eliminating the plate nonlinearity from \eqref{flowplate}. 
\begin{equation}\label{flowplatecons}\begin{cases}
		u_{tt}+\Delta^2u+k_0u_t = r_{\Omega}\gamma_0\big[\partial_t\phi \big]& \text { in }~~ \Omega\times (0,T),\\
		u(0)=u_0,~~u_t(0)=u_1 & \text{ in }~~\Omega,\\
		u=\Dn u = 0 & \text{ on } ~~\partial\Omega\times (0,T),\\
	\phi_{tt}=\Delta \phi & \text { in }~~ \realsthree_+ \times (0,T),\\
		\phi(0)=\phi_0,~~\phi_t(0)=\phi_1 & \text { in }~~ \realsthree_+\\
		\partial_z \phi = [\partial_tu]_{\text{ext}}& \text{ on } ~~\Omega \times (0,T).
	\end{cases}
\end{equation} 
One can view this model as a  half-space wave equation, with damping {\em active} on a small portion of the boundary through the plate equation. This model has a good energy relation, and trajectories are necessarily bounded, thus there is a weak limit point as $t\to \infty$. The unique stationary solution in this case is trivial (up to a constant, with respect to the Neumann flow problem).

 We see that the presence of the unbounded half-space is essential to invoke Huygen's principle, in order to dispense with the effects of the flow initial conditions (through waiting). The half-space is also needed for the closed-form reduction result (Theorem \ref{rewrite}) resulting in the plate dynamics with a memory potential on the RHS. 
  It will follow immediately that these plate dynamics converge uniformly (and exponentially) to zero---since the plate attractor exists in this case and is necessarily trivial.
 The decay of the plate can be lifted directly to the flow through the microlocal estimates, as used above; however, this can only be done on subsequences of time and the microlocal estimates invoked only on bounded (fattened) time intervals of uniform size. Thus there is no hope for a ``direct'' use of the microlocal estimate (as mentioned before, owing to the constant $C_T$) to lift the exponential decay of the plate to the flow on the infinite time horizon. Thus, there does not seem to any obvious uniformity of decay for solutions to the full system without invoking some sort of damping (e.g., viscosity) in the flow equation.

 A natural question would be to ask, what happens if $k_0=0$ above? Is it possible, after sufficiently long time has passed, that the transfer of energy from the plate to the flow would eventually ``dissipate the entire system energy at infinity?'' The answer to the very interesting question is in fact negative, as shown (by construction) in the very nice note \cite{igorexample}. It shows that flutter persists unless there is some damping on the structure. In that sense our result is optimal, since it  shows that flutter can be eliminated with an arbitrary small damping.

\subsection{Acoustic Chamber}
Let us now take $\Omega$ to be a flat portion of a bounded domain $\mathcal O$ which has boundary $\partial \mathcal O = \overline{\Omega}\cup \Omega^c$ that could be taken smooth, or even Lipschitz. Let $\mathbf n$ denote the unit outward normal to $\mathcal O$, so that $\mathbf n = -\mathbf e_3$ on the surface of $\Omega$.
\begin{equation}\label{flowplateconsbounded}\begin{cases}
		u_{tt}+\Delta^2u+k_0u_t = r_{\Omega}\gamma_0\big[\partial_t\phi \big]& \text { in }~~ \Omega\times (0,T),\\
		u(0)=u_0,~~u_t(0)=u_1 & \text{ in }~~\Omega,\\
		u=\Dn u = 0 & \text{ on } ~~\partial\Omega\times (0,T),\\
	\phi_{tt}=\Delta \phi & \text { in }~~ \mathcal O \times (0,T),\\
		\phi(0)=\phi_0,~~\phi_t(0)=\phi_1 & \text { in }~~ \mathcal O\\
		\partial_z \phi = u_t& \text{ on } ~~\Omega \times (0,T)\\
		\partial_{\mathbf n} \phi =0 & \text{ on } ~~\partial \mathcal O \setminus{\overline{\Omega}}.
	\end{cases}
\end{equation} 
This is a wave equation on a bounded domain, with damping on a portion of its bounded boundary {\em through the plate equation}. As in the previous subsection, this model has a good energy relation, a non-negative energy, and trajectories are necessarily bounded. We can say more.

First, it is immediate that the stationary solution is unique up to a flow constant $\phi = const.$ (as a pure Neumann problem).  Secondly, the dynamics in \eqref{flowplateconsbounded} can be seen to be {\em gradient}, i.e., with a strict Lyapunov function. If one considers the damper zero, in the sense $u_t \equiv 0$, then by the clamped plate boundary conditions we obtain $u \equiv 0$. In this case, we can rewrite the plate equation as:
 $$0 = r_{\Omega}\gamma_0\big[\partial_t\phi \big] \text { in }~~ \Omega\times (0,T).$$
 Then, time differentiating the resulting flow system (with $u_t=0$), we obtain:
 \begin{equation}\label{flowplateconsbounded*}\begin{cases}
	[\phi_t]_{tt}=\Delta [\phi_t] & \text { in }~~ \mathcal O \times (0,T),\\
		\partial_z [\phi_t] = 0 & \text{ on } ~~\Omega \times (0,T)\\
		\partial_{\mathbf n} \phi =0 & \text{ on } ~~\partial \mathcal O \setminus{\overline{\Omega}}.
	\end{cases}\end{equation}
	The above, taken with the condition that $\gamma_0[\phi_t]=0$ on $\Omega$, yields an overdetermined problem in $\phi_t$, which, by the standard hyperbolic theory provides that $\phi_t \equiv 0$. From this we deduce that $\phi_t \equiv 0$, and thus $\phi = const.$ is stationary. 

On the other hand, deducing {\em convergence} to a stationary state is not at all clear. Boundedness of trajectories (in the ``good'' energy) provides weak subsequential limit points in time. Additionally, we must have $u_t \rightharpoonup 0$ in $L^2(\Omega)$, since, if there were a subsequence of times where this did not hold, it would be seen to violate the finiteness of the dissipation integral (Corollary \ref{dissint}). Considering smooth data and using Ball's method of approximation for this linear problem allows one to boost the convergences. We obtain strong subsequential (in time) convergence to a limit point for a given subsequence $t_{n_k}$, and $u_t(t) \to 0 \in L^2(\Omega),~t\to \infty$ strongly. However, pushing this convergence to the flow to obtain a result like the main result in this treatment is not clear. 

The reduction formulae utilized here (as in \eqref{phidef} and \eqref{potential*}) to obtain our main result depend critically on Huygen's principle, waiting a sufficiently long amount of time on the  half-space until the effect of the initial condition is sent away. This of course is not available on the bounded domain. Concerning the effect of the boundary data itself, it is possible to prove microlocal estimates for this system, picking up some commutators and non-constant coefficients from the geometry. In particular, we will not have direct access to the ``lift'' given in \eqref{thisonenow} and used at several points herein. This particularly complicates the issue in passing from information about $u_t$ to that of $\phi_t$,  which convergence to a stationary state is predicated upon.

Although the boundedness of the domain is helpful from the point of compactness for the flow, the {\em loss of Hugyen's principle}---finite dependence on the signal---means that direct stablization of the plate (after waiting some time) is markedly more complicated. Said differently, the effects of the initial condition are not ``lost to dispersion", and we cannot invoke certain explicit formulae for the relevant N-to-D mapping. Any attempt to stabilize the initial conditions through the plate damper would seem to require a geometric condition on plate domain $\Omega$.

\footnotesize

\end{document}